\newcommand{\R}{{\mathbb R}}       % Field of real numbers
\newcommand{\CC}{{\mathcal C}}
\newcommand{\HH}{{\mathcal H}}
\newcommand{\RR}{{\mathcal R}}
\newcommand{\EE}{{\mathcal E}}
\newcommand{\diam}{\mathop{\rm diam}}
\newcommand{\dist}{{\rm dist}}
\newcommand{\rf}[1]{{(\ref{#1})}}
\newcommand{\supp}{\operatorname{supp}}
\newcommand{\vphi}{{\varphi}}
\newcommand{\ve}{{\varepsilon}}
\newcommand{\vv}{{\vspace{2mm}}}
\newcommand{\vvv}{\vspace{4mm}}
\newcommand{\wt}[1]{{\widetilde{#1}}}
\def\Xint#1{\mathchoice
{\XXint\displaystyle\textstyle{#1}}%
{\XXint\textstyle\scriptstyle{#1}}%
{\XXint\scriptstyle\scriptscriptstyle{#1}}%
{\XXint\scriptscriptstyle\scriptscriptstyle{#1}}%
\!\int}
\def\XXint#1#2#3{{\setbox0=\hbox{$#1{#2#3}{\int}$ }
\vcenter{\hbox{$#2#3$ }}\kern-.58\wd0}}
\def\avint{\Xint-}
\newtheorem{theorem}{Theorem}[section]
\newtheorem{lemma}[theorem]{Lemma}
\newtheorem{remark}[theorem]{Remark}
\newtheorem*{lemma*}{Lemma}
\newtheorem*{theorem*}{Theorem}
\theoremstyle{definition}
\newtheorem{definition}[theorem]{Definition}
\theoremstyle{remark}
\newtheorem{rem}[theorem]{\bf Remark}
\numberwithin{equation}{section}
\newcommand{\cnj}[1]{\overline{#1}}
\newcommand{\RRem}{\begin{rem}}
\newcommand{\erem}{\end{rem}}
\def\d{\partial}
\def\@tocline#1#2#3#4#5#6#7{\relax
  \ifnum #1>\c@tocdepth % then omit
  \else
    \par \addpenalty\@secpenalty\addvspace{#2}%
    \begingroup \hyphenpenalty\@M
    \@ifempty{#4}{%
      \@tempdima\csname r@tocindent\number#1\endcsname\relax
    }{%
      \@tempdima#4\relax
    }%
    \parindent\z@ \leftskip#3\relax \advance\leftskip\@tempdima\relax
    \rightskip\@pnumwidth plus4em \parfillskip-\@pnumwidth
    #5\leavevmode\hskip-\@tempdima
      \ifcase #1
       \or\or \hskip 1em \or \hskip 2em \else \hskip 3em \fi%
      #6\nobreak\relax
    \dotfill\hbox to\@pnumwidth{\@tocpagenum{#7}}\par
    \nobreak
    \endgroup
  \fi}
\def\cF{{\mathscr{F}}}
\def\cH{{\mathcal{H}}}
\def\cM{{\mathscr{M}}}
\def\bB{{\mathbb{B}}}
\def\bR{{\mathbb{R}}}
\def\bN{{\mathbb{N}}}
\newcommand{\ps}[1]{\left( #1 \right)}
\newcommand{\ck}[1]{\left\{#1 \right\}}
\def\gec{\gtrsim}
\def\lec{\lesssim}
\def\grad{\nabla}
\def\Tan{\textrm{Tan}}
\def\bR{\mathbb{R}}
\def\lec{\lesssim}
\def\ext{\textrm{ext}}
\newcommand\eqn[1]{\eqref{e:#1}}
\newcommand\Theorem[1]{Theorem \ref{t:#1}}
\newcommand\Lemma[1]{Lemma \ref{l:#1}}
\begin{document}

\title[A two-phase problem for harmonic measure]{On a two-phase problem for harmonic measure in general domains}

\author[Azzam]{Jonas Azzam}
\address{School of Mathematics \\ University of Edinburgh \\ JCMB, Kings Buildings \\
Mayfield Road, Edinburgh,
EH9 3JZ, Scotland.}
\email{j.azzam "at" ed.ac.uk}

\newcommand{\jonas}[1]{\marginpar{\color{magenta} \scriptsize \textbf{Jonas:} #1}}

\author[Mourgoglou]{Mihalis Mourgoglou}

\address{Mihalis Mourgoglou
\\
Departament de Matem\`atiques\\
 Universitat Aut\`onoma de Barcelona
\\
%Universitat Autònoma de Barcelona
Edifici C Facultat de Ci\`encies
\\
08193 Bellaterra (Barcelona), Catalonia
}
\email{mourgoglou@mat.uab.cat}

\author[Tolsa]{Xavier Tolsa}
\address{Xavier Tolsa
\\
ICREA, Passeig Lluís Companys 23 08010 Barcelona, Catalonia, and\\
Departament de Matem\`atiques and BGSMath
\\
Universitat Aut\`onoma de Barcelona
\\
Edifici C Facultat de Ci\`encies
\\
08193 Bellaterra (Barcelona), Catalonia
}
\email{xtolsa@mat.uab.cat}

\author[Volberg]{Alexander Volberg}
\address{Alexander Volberg
\\
Department of Mathematics
\\
Michigan State University
\\
East Lan\-sing, MI 48824, USA}

\email{volberg@math.msu.edu}

\subjclass[2010]{31A15,28A75,28A78}
\thanks{The first three authors were supported by the ERC grant 320501 of the European Research Council (FP7/2007-2013). X.T. was also supported by 2014-SGR-75 (Catalonia), MTM2013-44304-P (Spain), and by the Marie Curie ITN MAnET (FP7-607647). A.V. was supported by NSF Grant DMS1600065}

\begin{abstract}
We show that, for disjoint domains in the Euclidean space, mutual absolute continuity of their harmonic measures implies  absolute continuity with respect to surface measure and rectifiability in the intersection of their boundaries. This improves on our previous result which assumed that the boundaries satisfied the capacity density condition. 
\end{abstract}

\maketitle

\tableofcontents

\section{Introduction}

Let $\Omega_1,\Omega_2\subset\R^{n+1}$ be disjoint domains, and let $E\subset \partial \Omega_1\cap 
\partial \Omega_2$. In 1990 C. Bishop \cite{Bishop-questions} conjectured that if the respective harmonic measures of $\Omega_1$ and
$\Omega_2$ are mutually absolutely continuous, then they should be also mutually absolutely continuous with respect the $n$-dimensional Hausdorff measure on an $n$-rectifiable set  $\R^{n+1}$.
In the work \cite{AMT} we proved this conjecture under the assumption that $\Omega_1$ and $\Omega_2$
satisfy the so called capacity density condition (CDC) (although we obtained the stronger property that the set of tangent points for $\d\Omega_{1}$ has positive $\cH^{n}$-measure). In the present paper we prove this in full
generality. The precise result reads as follows.

\begin{theorem}\label{t:main}
For $n\geq 2$, let $\Omega_1,\Omega_2\subset \R^{n+1}$ be two domains and denote by 
$\omega_1$ and $\omega_2$ their respective harmonic measures. Let $E\subset \partial \Omega_1\cap
\partial \Omega_2$ 
 be a Borel set such that $\omega_{1}|_E\ll\omega_{2}|_E\ll\omega_{1}|_E$. Then $E$ contains an $n$-rectifiable subset $F$ with $\omega_1(E\setminus F)=0$ such that
$\omega_1|_F$ and $\omega_2|_F$ are mutually absolutely continuous with respect to $\cH^{n}|_F$. 
\end{theorem}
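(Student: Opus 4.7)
The plan is to reduce to the Capacity Density Condition (CDC) setting of \cite{AMT} by carving each $\Omega_i$ into a CDC-subdomain that preserves the two-phase absolute continuity on a large subset of $E$. Before starting, I would make the standard measure-theoretic reductions: split $E$ into countably many Borel pieces on which the Radon--Nikodym derivative $d\omega_1/d\omega_2$ is bounded above and below, fix finite poles $p_i \in \Omega_i$, and localize to small balls. It then suffices to prove that, for each such piece, $\omega_1$-almost every point lies in an $n$-rectifiable subset of positive $\omega_1$-measure on which both $\omega_i$ are absolutely continuous with respect to $\cH^n$, since a countable exhaustion promotes this to the full subset $F \subset E$.

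The heart of the argument is the construction of subdomains $\widetilde{\Omega}_i \subset \Omega_i$ satisfying CDC such that the corresponding harmonic measures $\widetilde{\omega}_i$ remain mutually absolutely continuous on a Borel set $E' \subset E$ with $\omega_1(E \setminus E')$ small. I would build $\widetilde{\Omega}_i$ by removing from $\Omega_i$ a Whitney-type union of balls accumulating near $\partial \Omega_i$, tuned so that every boundary point of $\widetilde{\Omega}_i$ sees a definite portion of its complement at every scale (thereby enforcing CDC), while placing the balls so as to avoid a chosen $E' \subset E$ of nearly full $\omega_1$-mass.

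The two-sided comparison of $\widetilde{\omega}_i$ with $\omega_i$ on $E'$ is the next step. The inequality $\widetilde{\omega}_i \leq \omega_i$ on $E'$ follows from the maximum principle applied to the harmonic function $x \mapsto \omega_i(A, x)$ restricted to $\widetilde{\Omega}_i$, for any Borel $A \subset E'$. The reverse bound $\omega_i \lesssim \widetilde{\omega}_i$ at $\omega_i$-typical points of $E'$ is considerably more delicate, since it requires that the carving does not significantly ``shield'' $E'$ from the interior of $\widetilde{\Omega}_i$. I would combine a Bourgain-type lower bound for $\widetilde{\omega}_i$ at CDC scales with a weak-type estimate controlling how much $\omega_i$-mass can be lost to the removed balls near $E'$. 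Together with the hypothesis $\omega_1|_E \sim \omega_2|_E$, this yields $\widetilde{\omega}_1|_{E'} \sim \widetilde{\omega}_2|_{E'}$.

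Applying the CDC version of the theorem from \cite{AMT} to the triple $(\widetilde{\Omega}_1, \widetilde{\Omega}_2, E')$ then produces an $n$-rectifiable subset $F' \subset E'$ of positive $\cH^n$-measure on which $\widetilde{\omega}_1$ and $\widetilde{\omega}_2$ are mutually absolutely continuous with $\cH^n|_{F'}$, and the comparison $\widetilde{\omega}_i \sim \omega_i$ on $E'$ transfers this conclusion to the original harmonic measures; the exhaustion from the first paragraph upgrades $F'$ to a rectifiable $F \subset E$ with $\omega_1(E \setminus F) = 0$. The hard step will be orchestrating the carving construction together with the reverse comparison $\omega_i \lesssim \widetilde{\omega}_i$: the removed balls must be dense enough to force CDC on $\widetilde{\Omega}_i$ yet sparse enough near $E$ that $\omega_i|_E$ is essentially preserved when passing to $\widetilde{\omega}_i$, and quantifying this tradeoff --- likely through a dyadic stopping-time scheme --- is where the bulk of the new analysis should lie.
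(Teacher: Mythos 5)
Your plan of carving $\Omega_i$ down to CDC subdomains $\widetilde\Omega_i$ and then invoking the CDC theorem from \cite{AMT} is attractive in outline, but the step you flag as ``the hard step'' --- the reverse comparison $\omega_i\lesssim\widetilde\omega_i$ on a subset of $E$ of nearly full $\omega_i$-mass --- is not merely delicate; it is the crux of the whole problem, and there is no reason to expect it to hold for the kind of carving you describe. The CDC can fail for $\Omega_i$ on a subset of $\partial\Omega_i$ of \emph{positive} harmonic measure (indeed, potentially on all of $E$). To impose CDC at such a point $\xi$ one must place removed material of uniformly positive relative capacity in $B(\xi,r)\setminus\widetilde\Omega_i$ at \emph{every} scale $r$, hence an infinite sequence of obstacles accumulating at $\xi$ from inside $\Omega_i$. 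Brownian motion started at $p_i$ must then evade all of them to reach $\xi$, and nothing prevents this from annihilating the harmonic measure of $E$ near $\xi$: the very thinness of $\Omega_i^c$ that causes CDC to fail is what forces the obstacles to carry essentially all the capacity, and hence to absorb essentially all the exiting paths. A Bourgain-type lower bound for $\widetilde\omega_i$ gives scale-invariant estimates inside the carved domain, but it does not by itself give a two-sided comparison between the two boundary measures restricted to $E$, which is what you need. Your proposal gives no mechanism for balancing these competing demands, and I believe none exists in general.

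The paper sidesteps this entirely. Its preliminary reduction (Lemma~\ref{l:weiner}) passes only to \emph{Wiener regular} subdomains, not CDC ones: the set of irregular boundary points is \emph{polar}, so the obstacles $H$ can be taken with $\omega_i(H)$ and $\widetilde\omega_i(H)$ as small as one likes simply by tuning a superharmonic barrier $v_i$ --- a maneuver that has no analogue when the bad set has positive harmonic measure. The genuine new content, flagged in the introduction, is then a different blowup argument: rather than the locally uniform convergence of rescaled Green functions used in \cite{AMT} (which relied on CDC via Hölder continuity of harmonic measure at the boundary), the paper extracts \emph{local $L^2$} limits via Caccioppoli and Rellich--Kondrachov (Lemma~\ref{l:capf}), using only Wiener regularity and the nondegeneracy supplied by the ACF monotonicity formula together with Lemmas~\ref{l:largeinterior}--\ref{l:otherside}. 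This is the technique of \cite{TV}, and it is what actually removes the CDC hypothesis. If you want to rescue your plan, you would at minimum need to replace ``CDC'' by a condition that is known to fail only on a polar set; but once you do that you are essentially at Wiener regularity and you still have to redo the compactness step, which is where the paper's real work lies.
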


We remark that, in the planar case $n=1$, the analogous conclusion had been proved previously by Bishop in \cite{Bishop-arkiv}. Another partial result was obtained by Kenig, Preiss and Toro in
\cite{KPT}. Therein, they showed, among others, that if $\Omega_1$ and $\Omega_2={\ext}(\Omega_1)$
are NTA domains with mutually absolutely continuous harmonic measures, then these
harmonic measures are concentrated on a set of dimension $n$.

As in \cite{AMT}, the main tools to prove the preceding result stated in Theorem \ref{t:main} are:
\begin{itemize}
\item a blowup argument for harmonic measure
inspired by the techniques from Kenig, Preiss and Toro \cite{KPT}, 
\item the Alt-Caffarelli-Friedman monotonicity
formula \cite{ACF}, and 
\item a rectifiability criterion by Girela-Sarri\'on and Tolsa \cite{GT}, which in turn uses techniques which arise from the solution of the David-Semmes problem by
Nazarov, Tolsa and Volberg \cite{NToV}, \cite{NToV-pubmat} and the work of Eiderman, Nazarov, and Volberg 
\cite{ENV}. 
\end{itemize}

The main new tool that we use in the present paper is a new blowup argument which does not require the CDC
and yields the local convergence in $L^2$ of some subsequences of rescaled Green functions.
This technique was used very recently in \cite{TV} to obtain a new proof of Tsirelson's theorem \cite{Tsirelson}. Instead, the blow argument in \cite{AMT} yields local uniform convergence
of the rescaled Green functions.

\vvv

\section{Harmonic Measure Preliminaries}

We will need the following classical result (see \cite{AHM3TV}, for example):

\begin{lemma}\label{l:w>G}
Let $n\ge 2$ and $\Omega\subset\R^{n+1}$ be a bounded domain. Denote by $\omega^p$ its harmonic measure with pole at $p\in\Omega$ and by $G$ its Green function.
Let $B=B(x_0,r)$ be a closed ball with $x_0\in\partial \Omega$ and $0<r<\diam(\partial \Omega)$. Then, for all $a>0$,
\begin{equation}\label{eq:Green-lowerbound}
 \omega^{x}(aB)\gtrsim \inf_{z\in 2B\cap \Omega} \omega^{z}(aB)\, r^{n-1}\, G(x,y)\quad\mbox{
 for all $x\in \Omega\backslash  2B$ and $y\in B\cap\Omega$,}
 \end{equation}
 with the implicit constant independent of $a$.
\end{lemma}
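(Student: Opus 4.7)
The plan is to estimate $\omega^x(aB)$ from below in two steps: first, reduce it to a harmonic-measure quantity in the subdomain $\Omega' := \Omega \setminus 2B$; second, compare that quantity to $G(x,y)$ via the maximum principle.

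For the reduction, fix $y \in B\cap\Omega$ and $x \in \Omega\setminus 2B$. Since $z\mapsto\omega^z(aB)$ is a bounded harmonic function on $\Omega$, hence on $\Omega'$, its value at $x$ equals the integral of its boundary trace against $\omega^x_{\Omega'}$. The boundary $\partial\Omega'$ splits into $\partial(2B)\cap\Omega$, which sits in the interior of $\Omega$ (so the trace is the continuous function $\omega^z(aB)$), and $\partial\Omega\setminus\overline{2B}$, where the trace is $\chi_{aB}$ at each regular point. Discarding the nonnegative contribution from the second piece yields
\[
\omega^x(aB) \;\geq\; \int_{\partial(2B)\cap\Omega}\omega^z(aB)\,d\omega^x_{\Omega'}(z) \;\geq\; m\,\omega^x_{\Omega'}\bigl(\partial(2B)\cap\Omega\bigr),
\]
where $m := \inf_{z\in 2B\cap\Omega}\omega^z(aB)$. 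The entire $a$-dependence is thereby absorbed into $m$.

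For the comparison, set $h(z) := \omega^z_{\Omega'}(\partial(2B)\cap\Omega)$. Since $y \in B$ lies outside $\Omega'$, $G(\cdot,y)$ is harmonic on $\Omega'$; and since $n \geq 2$, the classical dimensional bound $G(\cdot,y) \leq c_n|\,\cdot\,-y|^{1-n}$, combined with $|z-y| \geq r$ for $z \in \partial(2B)$, gives $G(z,y) \leq c_n r^{1-n}$ on $\partial(2B)\cap\Omega$. As $h \equiv 1$ there, we obtain $h \geq c_n^{-1}\,r^{n-1}\,G(\cdot,y)$ on $\partial(2B)\cap\Omega$; on $\partial\Omega\setminus\overline{2B}$ both functions vanish at every regular point. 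The maximum principle applied to the bounded harmonic function $h - c_n^{-1}\,r^{n-1}\,G(\cdot,y)$ on $\Omega'$ then propagates the inequality to all of $\Omega'$, and combining with the reduction step gives the lemma.

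The main technical obstacle is the maximum-principle step at possibly irregular boundary points of $\Omega$, since no regularity of $\partial\Omega$ is assumed. However, both $h$ and $G(\cdot,y)$ are bounded harmonic functions on $\Omega'$ whose boundary values at every regular point of $\partial\Omega\setminus\overline{2B}$ equal $0$, and the set of irregular boundary points carries no harmonic measure by Kellogg's theorem; so the generalized maximum principle for bounded harmonic functions applies and closes the argument. The final constant depends only on $n$, which is the uniformity in $a > 0$ claimed in the statement.
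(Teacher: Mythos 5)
Your proof is correct and follows the standard route, which is also the one in \cite{AHM3TV} (the paper itself does not reprove this lemma but cites that reference): pass to the punctured domain $\Omega'=\Omega\setminus 2B$, compare the harmonic measure in $\Omega'$ of the inner cap $\partial(2B)\cap\Omega$ against $r^{n-1}G(\cdot,y)$ using the bound $G(\cdot,y)\lesssim r^{1-n}$ on $\partial(2B)$, and dispose of irregular boundary points of $\Omega$ via Kellogg's theorem and the generalized maximum principle. Two small points of imprecision are worth tightening. First, $\partial\Omega'$ is not exhausted by $\partial(2B)\cap\Omega$ and $\partial\Omega\setminus\overline{2B}$; it may also meet $\partial\Omega\cap\partial(2B)$, where the Dirichlet data for $h$ is discontinuous and $h$ need not tend to $0$, so ``both functions vanish at every regular point'' is not quite right there. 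The comparison nevertheless closes because $h\geq 0$ everywhere while $G(\cdot,y)\to 0$ at every $\Omega$-regular point of $\partial\Omega$, including those on $\partial(2B)$, and the remaining points form a polar set. Second, the stated equality $\omega^x(aB)=\int(\text{boundary trace})\,d\omega^x_{\Omega'}$ need not hold literally for a general bounded harmonic function on a rough domain; what you actually need and use is only the inequality $\omega^x(aB)\geq\int_{\partial(2B)\cap\Omega}\omega^z(aB)\,d\omega^x_{\Omega'}(z)$, which follows at once because $\omega^\cdot(aB)$ is a nonnegative bounded harmonic function lying in the upper Perron class for the data $\omega^\cdot(aB)\,\chi_{\partial(2B)\cap\Omega}$ on $\Omega'$. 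With those adjustments the proof is airtight.
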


The next lemma is usually known as Bourgain's estimate. See \cite{AHM3TV} for the precise formulation below.

\begin{lemma}
\label{lembourgain}
There is $\delta_{0}>0$ depending only on $n\geq 1$ so that the following holds for $\delta\in (0,\delta_{0}]$. Let $\Omega\subset \R^{n+1}$ be a  bounded domain, $n-1<s\le n+1$,  $\xi \in \partial \Omega$, $r>0$, and $B=B(\xi,r)$. Then 
\[ \omega^{x}(B)\gtrsim_{n,s} \frac{\mathcal H_\infty^{s}( \delta B \setminus \Omega)}{(\delta r)^{s}}\quad \mbox{  for all }x\in \delta B\cap \Omega .\]
\end{lemma}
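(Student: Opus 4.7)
My plan is to follow the classical potential-theoretic argument of Bourgain. First I would reduce to the nontrivial case $\cH^s_\infty(\delta B\setminus\Omega)>0$ and invoke Frostman's lemma to obtain a nonnegative Borel measure $\mu$ supported in $\delta B\setminus\Omega$ with $s$-dimensional growth $\mu(B(y,t))\leq t^s$ for all $y\in\R^{n+1}$ and $t>0$, and with total mass $\|\mu\|\gtrsim_{n,s}\cH^s_\infty(\delta B\setminus\Omega)$. The test object is then the Newtonian potential $U^\mu(z)=\int|z-y|^{1-n}\,d\mu(y)$ (for $n\geq 2$; in the planar case $n=1$ one replaces this by a normalized logarithmic kernel $\log(R/|z-y|)$ with $R\sim\delta r$, and the remaining argument is parallel).

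Next I would establish three estimates on $U^\mu$ using the Frostman growth together with $s>n-1$: (i) the global bound $\sup_{z\in\R^{n+1}} U^\mu(z)\leq C_{n,s}(\delta r)^{s-n+1}$, obtained from the layer-cake formula and splitting the radial integral at the scale $\|\mu\|^{1/s}$; (ii) the lower bound $U^\mu(x)\geq (2\delta r)^{1-n}\|\mu\|$ at the evaluation point $x\in\delta B$, since $|x-y|\leq 2\delta r$ for $y\in\delta B$; and (iii) the far-field decay $U^\mu(z)\leq (r/2)^{1-n}\|\mu\|=2^{n-1}\delta^{n-1}(\delta r)^{1-n}\|\mu\|$ for $z\in\R^{n+1}\setminus B$, valid once $\delta\leq 1/2$. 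Since $\supp\mu\subset\R^{n+1}\setminus\Omega$, $U^\mu$ is bounded and harmonic on $\Omega$, so the Poisson representation $U^\mu(x)=\int_{\partial\Omega}U^\mu\,d\omega^x$ holds. Splitting this integral over $\partial\Omega\cap B$ versus $\partial\Omega\setminus B$ and inserting (i) and (iii) yields
\[
U^\mu(x)\leq C_{n,s}(\delta r)^{s-n+1}\,\omega^x(B) \;+\; 2^{n-1}\delta^{n-1}\,(\delta r)^{1-n}\|\mu\|.
\]

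The crux is the absorption step. Combining the display with (ii), the ratio of the error term on the right to the lower bound on $U^\mu(x)$ is $4^{n-1}\delta^{n-1}$, so choosing $\delta_0=\delta_0(n)$ small enough that $4^{n-1}\delta_0^{n-1}\leq 1/2$ lets me absorb half of $U^\mu(x)$ into the error and conclude $\omega^x(B)\gtrsim_{n,s}\|\mu\|/(\delta r)^s\gtrsim_{n,s}\cH^s_\infty(\delta B\setminus\Omega)/(\delta r)^s$. The main obstacle is precisely this absorption: one must exploit the $\delta^{n-1}$ gain coming from the distance between $\supp\mu\subset\delta B$ and $\R^{n+1}\setminus B$, and this is what forces $\delta_0$ to depend on $n$. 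Minor technicalities to verify along the way are the Poisson representation for $U^\mu$ on a general bounded domain (which uses the continuity of $U^\mu$, guaranteed by $s>n-1$), and, in the case $n=1$, the normalization of the logarithmic kernel so that it is nonnegative on $\supp\mu$ and the three estimates (i)--(iii) take the analogous form.
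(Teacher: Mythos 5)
Your argument is correct and is the standard Bourgain-type potential-theoretic proof (the paper simply cites \cite{AHM3TV} for this lemma, which gives essentially the same argument): Frostman measure, pointwise bounds on the Newtonian potential exploiting the $s$-growth and $s>n-1$, Poisson representation on a general bounded domain via the Perron class argument, and absorption using the $\delta^{n-1}$ gain between near- and far-field. One small caveat: for $n=1$ the "remaining argument is parallel" is a bit optimistic as stated — with the kernel $\log(R/|z-y|)$ and $R\sim\delta r$, the analogue of your global bound (i) acquires a logarithmic correction (roughly $\|\mu\|\bigl(1+\tfrac1s\log\tfrac{(\delta r)^s}{\|\mu\|}\bigr)$), and the far-field term is absorbed not because it is a small positive fraction of $U^\mu(x)$ but because it is actually nonpositive once $\delta$ is small enough that $\log(8\delta)<0$; the conclusion $\omega^x(B)\gtrsim\|\mu\|/(\delta r)^s$ then follows from the elementary inequality $t(1+\log(1/t))\lesssim 1$ for $t\in(0,1]$, so the lemma does hold for $n=1$ but via a slightly different accounting than in the $n\geq2$ case.
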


\vvv

In the the next lemma we reduce the proof of Theorem \ref{t:main} to the case when the
 domains $\Omega_1,\Omega_2$ are Wiener regular. The proof is from \cite{HMMTV}, but as this paper will not be published, we recreate the details here with some slight modifications.

\begin{lemma}\label{l:weiner}
Let $\Omega_{1}$ and $\Omega_{2}$ be two disjoint connected domains in $\bR^{n+1}$ with harmonic measures $\omega_{i}=\omega_{\Omega_{i}}^{p_{i}}$ for some $p_{i}\in\Omega_{i}$ and suppose $\omega_{1}\ll \omega_{2}\ll \omega_{1}$ on a Borel set $E\subset \d\Omega_{1}\cap \d\Omega_{2}$. Then there are Wiener regular subdomains $\widetilde{\Omega}_{i}\subset \Omega_{i}$ containing $p_{i}$ for $i=1,2$ and $G_{0}\subset E$ with $\widetilde{\omega}_{i}(G_{0})>0$ upon which $\widetilde{\omega}_{2}\ll \widetilde{\omega}_{1}\ll \widetilde{\omega}_{2}$. 
\end{lemma}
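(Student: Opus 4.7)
The plan is to construct Wiener regular subdomains $\widetilde\Omega_i \subset \Omega_i$ by a Whitney-cube modification of $\Omega_i$ and then extract the set $G_0$ via the Radon--Nikodym density of $\widetilde\omega_i$ with respect to $\omega_i$. As preliminary reductions: by Kellogg's theorem the set of Wiener-irregular boundary points of $\Omega_i$ is polar, hence $\omega_i$-null, so I may assume every point of $E$ is Wiener regular for both $\Omega_1$ and $\Omega_2$. By the Lebesgue differentiation theorem for the Radon measures $\omega_1,\omega_2$ on $\R^{n+1}$, $\omega_i$-a.e.\ $x \in E$ is a density point of $E$ for both measures simultaneously; fix such $x_0 \in E$ and, for any prescribed $\varepsilon > 0$, a radius $r_0 > 0$ with
\[
\omega_i\big(B(x_0,r_0)\setminus E\big) < \varepsilon\, \omega_i\big(B(x_0,r_0)\cap E\big), \quad i=1,2,
\]
and write $B_0 := B(x_0,r_0)$.

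For the construction, I take a Whitney decomposition of $\Omega_i$ at scale $\ll r_0$ and define $\widetilde\Omega_i$ to be the connected component containing $p_i$ of the interior of $(\Omega_i \cap 2B_0)\cup\bigcup_{Q\in\mathcal{W}_i'}\overline Q$, where $\mathcal{W}_i'$ is a finite chain of Whitney cubes of $\Omega_i$ connecting $p_i$ to $2B_0$. Then $\widetilde\Omega_i \cap 2B_0 = \Omega_i \cap 2B_0$, so $B_0 \cap \partial\Omega_i \subset \partial\widetilde\Omega_i$. The part of $\partial\widetilde\Omega_i$ outside $2B_0$ is a union of dyadic cube faces, hence piecewise Lipschitz and Wiener regular; the part inside $B_0$ is contained in $\partial\Omega_i$ and is Wiener regular by the preliminary reduction (since $\widetilde\Omega_i^{c}\supset\Omega_i^{c}$ locally, so Wiener's criterion is at least as strong there).

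For $F \subset \partial\Omega_i \cap \partial\widetilde\Omega_i$, applying the maximum principle to the bounded harmonic function $z \mapsto \omega_i^z(F)$ on $\widetilde\Omega_i$ (Wiener regular by construction) gives
\begin{equation}\label{e:split}
\omega_i(F) \;=\; \widetilde\omega_i(F) \;+\; \int_{\Omega_i \cap \partial\widetilde\Omega_i} \omega_i^z(F)\,d\widetilde\omega_i(z),
\end{equation}
so $\widetilde\omega_i \le \omega_i$ and hence $\widetilde\omega_i \ll \omega_i$ on $\partial\Omega_i \cap \partial\widetilde\Omega_i$. Let $\phi_i := d\widetilde\omega_i/d\omega_i$ on this set, and set
\[
G_0 := \{\phi_1>0\}\cap\{\phi_2>0\}\cap E\cap B_0.
\]
On $G_0$ both $\phi_i>0$, so $\widetilde\omega_i$ and $\omega_i$ are mutually absolutely continuous for each $i$; combined with $\omega_1 \approx \omega_2$ on $E\supset G_0$, this yields $\widetilde\omega_1 \approx \widetilde\omega_2$ on $G_0$, which is the mutual absolute continuity claimed.

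The remaining and main task is to show $\widetilde\omega_i(G_0) > 0$. A Bourgain-type argument (Lemma~\ref{lembourgain}) applied to $\widetilde\Omega_i$ at the Wiener regular point $x_0$ gives $\widetilde\omega_i(B_0 \cap \partial\Omega_i) > 0$, and then $\widetilde\omega_i \le \omega_i$ with $\omega_i(B_0\setminus E)<\varepsilon\,\omega_i(B_0\cap E)$ forces $\widetilde\omega_i(B_0 \cap E) > 0$ for $\varepsilon$ small. To further trap inside $\{\phi_j>0\}$ for $j\ne i$, one must control the ``leakage'' of $\omega_i$-mass through the interior boundary $\Omega_i\cap\partial\widetilde\Omega_i \subset \Omega_i\setminus 2B_0$: Lemma~\ref{l:w>G} and Lemma~\ref{lembourgain} estimate $\omega_i^z(F)$ in \eqref{e:split} for $z$ on the interior boundary in terms of Green functions and $\omega_i(F)$, and taking $\varepsilon$ small enough absorbs the bad terms. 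This last step is the principal obstacle, because without CDC, NTA, or corkscrew assumptions on $\Omega_i$, the only tools available to relate $\omega_i^z$ at interior-boundary points to $\omega_i^{p_i}$ on $G_0$ are the intrinsic estimates of Lemmas~\ref{l:w>G} and~\ref{lembourgain} inside the auxiliary domain $\widetilde\Omega_i$.
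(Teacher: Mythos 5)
Your construction does not actually produce Wiener regular subdomains, and you acknowledge yourself that the final step is not completed, so there is a genuine gap on both fronts.

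The central problem is with the claim that $\partial\widetilde\Omega_i\cap B_0$ is Wiener regular. Your preliminary reduction only removes the (polar, hence $\omega_i$-null) irregular points that happen to lie in $E$; it does not touch $\partial\Omega_i$ itself. Since your Whitney-chain modification leaves the boundary inside $2B_0$ exactly as it was, $\partial\widetilde\Omega_i\cap 2B_0=\partial\Omega_i\cap 2B_0$, and any irregular boundary point of $\Omega_i$ in $B_0\setminus E$ remains an irregular boundary point of $\widetilde\Omega_i$. The monotonicity ``since $\widetilde\Omega_i^c\supset\Omega_i^c$'' is vacuous inside $2B_0$ where the complements coincide, and outside $2B_0$ the boundary is regular anyway. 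So $\widetilde\Omega_i$ is not Wiener regular, which is the entire point of the lemma. Second, you flag the step that would force $\widetilde\omega_i(G_0)>0$, via control of the ``leakage'' through $\Omega_i\cap\partial\widetilde\Omega_i$, as ``the principal obstacle'' and leave it unresolved; as written, the proof is not complete.

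The paper avoids both problems by removing the irregular set from the domain rather than from $E$. Using a positive superharmonic $v_i$ that blows up on the irregular sets $F_1\cup F_2$ (Helms), one chooses small closed balls $B_j$ centered at irregular points on which $v_i\geq\lambda$ and sets $\widetilde\Omega_i=\Omega_i\setminus\bigcup_j B_j$. Every point of $\partial\widetilde\Omega_i$ is then either on some $\partial B_j$ (regular because $B_j^c\supset\widetilde\Omega_i$) or in $\partial\Omega_i\setminus(F_1\cup F_2)$ (regular because it was regular for $\Omega_i$), so $\widetilde\Omega_i$ is genuinely Wiener regular. The barrier $v_i/\lambda$ then bounds $\omega_i(H)$ and $\widetilde\omega_i(H)$ by $\lambda v_i(p_i)$, and taking $\lambda$ small gives $\widetilde\omega_i(G)\geq\tfrac12\omega_i(G)>0$ directly; no estimate on $\omega_i^z$ at interior boundary points is needed. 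The mutual absolute continuity on a further subset $G_2\subset G$ then follows from Carleman's principle ($\widetilde\omega_i\ll\omega_i$) plus the Lebesgue decomposition, exactly the soft measure-theoretic step you had in mind, but without the leakage issue.
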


\begin{proof}
Let $F_{i}$ be the irregular points for $\Omega_{i}$. By \cite[Theorem 6.6.8]{AG} these sets are polar sets in $\bR^{n+1}$. By \cite[Lemma 6.4.6]{H}, there is a positive superharmonic function $v_{i}$ on $\Omega_{i}$ so that 
\[
\lim_{y\rightarrow x\atop y\in \Omega_{i}} v_{i}(y)=\infty \;\; \mbox{for all }x\in F:=F_{1}\cup F_{2}.\]

%Let 
%\[
%V_{\lambda}=\bigcup_{i=1}^{2} \{x\in \Omega_i:v_{i}(x)>\lambda\}.\]

Let $\lambda>0$. Since $v_{i}$ is superharmonic on $\Omega_{i}$, it is lower semicontinuous, and remains so when we extend it by zero to $\Omega_{i}^{c}$. Thus, for each $x\in F$ there is a closed ball $B_{i}(x)$ centered at $x$ (not containing either $p_{i}$) such that $v_{i}\geq \lambda$ on $B_{i}(x)\cap \Omega_{i}$. Let $\{B_{j}\}$ be a Besicovitch subcovering. Let $H=\bigcup B_{j}$ and
\[
\widetilde{\Omega}_{i} = \Omega_{i}\backslash  H.\]
Note that $\wt\Omega_{i}$ is open. Indeed, to show that $\widetilde{\Omega}_{i}^{c}$ is closed consider $x_{k}\in \wt\Omega_{i}^{c}$, $k\geq 1$ and $x_{k}\rightarrow x$. Then we need to show $x\in \widetilde{\Omega}_{i}^{c}$. If there is a subsequence contained in $\Omega_{i}^{c}$, we are done. Otherwise, assume that $x_{k}\in H\backslash \Omega_{i}^{c}=H\cap \Omega_{i}$. If $x_{k}\in B_{j}$ for infinitely many $k$, then $x\in B_{j}$ and we are done since $B_{j}$ is closed and $B_{j}\subset \widetilde{\Omega}_{i}^{c}$. Otherwise, suppose $x_{k}$ is not in any $B_{j}$ more than finitely many times. By the bounded overlap property, if $j(x_k)$ is such that $x_k\in B_{j(x_k)}$, then $r(B_{j(x_k)})\downarrow 0$ as $k\to\infty$, and since the balls are centered on $F\subset \Omega_{i}^{c}$, $x\in \Omega_{i}^{c}\subset \widetilde{\Omega}_{i}^{c}$, and we are done. Thus, $\widetilde{\Omega}_{i}$ is open. 

Let $\widetilde{\omega}_{i} = \omega_{\widetilde{\Omega}_{i}}^{p_{i}}$. 
 Note that $\widetilde{\Omega}_{i}$ is now a regular domain. Indeed, one need only observe that whenever $\Omega\subset \Omega'$ are two domains and $x\in \d\Omega \cap \d\Omega'$ is regular for $\Omega'$, then it is regular for $\Omega$. Hence, if $x\in \d\widetilde{\Omega}_{i}$, then either $x\in \d B_{i}$ for some $i$, in which case $x$ is regular for $B_{i}^{c}\supset \widetilde{\Omega}_{i}$, or $x\in \d\Omega_{i}\backslash F$, in which case $x$ is regular for $\Omega_{i}\supset \widetilde{\Omega}_{i}$ since it is not in $F$, and either case implies $x$ is regular for $\widetilde{\Omega}_{i}$. 

Let $G=E\backslash H$.  By the maximum principle on $\widetilde{\Omega}_{i}$, since $u/\lambda \geq 1$ on $H$, $\omega_{i}(H)\leq \lambda v_{i}(p_{i})$. Picking
\[
\lambda < \frac{1}{2} \min\{\omega_{i}(E)/v_{i}(p_{i})\},\] 
this gives $\omega_{i}(H)\leq \frac{1}{2} \omega_{i}(E)$ and hence $\omega_{i}(G)>0$. 
Similarly,  by the maximum principle, since $u/\lambda \geq 1$ on $H$, $\widetilde{\omega}_{i}(H)\leq \lambda v_{i}(p_{i})$. Picking 
\[
\lambda < \frac{1}{2} \min\{\omega_{i}(G)/v_{i}(p_{i})\},\] 
this gives 
\[
\widetilde{\omega}_{i}(H)\leq \frac{1}{2} \omega_{i}(G).\]

Moreover, by the maximum principle, and since $\widetilde{\Omega}_{i}$ is a regular domain,
\[
\widetilde{\omega}_{i}(H^{c}\cap G^{c})\leq {\omega}_{i}(H^{c}\cap G^{c}).\]
Thus,
\begin{align*}
\widetilde{\omega}_{i}(G)
& =1- \widetilde{\omega}_{i}(G^{c})
=1-\widetilde{\omega}_{i}(H\cap G^{c})-\widetilde{\omega}_{i}(H^{c}\cap G^{c}) \\
& \geq 1-\frac{1}{2} \omega_{i}(G)-{\omega}_{i}(H^{c}\cap G^{c})
\geq \omega_{i}(G)-\frac{1}{2} \omega_{i}(G)= \frac{1}{2} \omega_{i}(G)>0
\end{align*}

Note that $\widetilde{\omega}_{1}\ll \omega_{1}$ on $G$ by the maximum principle (or by Carleman's principle, see \cite[Theorem 11.3(b)]{HKM}), and since $\widetilde{\omega}_{1}(G)>0$, it is not hard to show using the Lebesgue decomposition theorem that there is $G_{1}\subset G$ of full $\widetilde{\omega}_{1}$-measure upon which we also have $ \omega_{1} \ll \widetilde{\omega}_{1}$. Hence $\omega_{1}(G_{1})>0$, which implies $\omega_{2}(G_{1})>0$. The same reasoning gives us a set $G_{2}\subset G_{1}$ upon which $\widetilde{\omega}_{2} \ll \omega_{2} \ll \widetilde{\omega}_{2}$. Thus, $\widetilde{\omega}_{2}\ll \widetilde{\omega}_{1}\ll \widetilde{\omega}_{2}$ on $G_{2}$. 
\end{proof}
\vv

\begin{remark}
In light of this lemma, for the remainder of this paper, to prove Theorem \ref{t:main} we shall assume our domains 
$\Omega_1,\Omega_2$ are Wiener regular.
\end{remark}
\vv

\section{The Alt-Caffarelli-Friedman monotonicity formula}

The following theorem contains the Alt-Caffarelli-Friedman monotonicity formula:

\begin{theorem} \label{t:ACF} \cite[Theorem 12.3]{CS} Let $B(x,R)\subset \R^{n+1}$, and let $u_1,u_2\in
W^{1,2}(B(x,R))\cap C(B(x,R))$ be nonnegative subharmonic functions. Suppose that $u_1(x)=u_2(x)=0$ and
that $u_1\cdot u_2\equiv 0$. Set
\begin{equation}\label{eq*123}
\gamma(x,r) = \left(\frac{1}{r^{2}} \int_{B(x,r)} \frac{|\grad u_1(y)|^{2}}{|y-x|^{n-1}}dy\right)\cdot \left(\frac{1}{r^{2}} \int_{B(x,r)} \frac{|\grad u_2(y)|^{2}}{|y-x|^{n-1}}dy\right).
\end{equation}
Then $\gamma(x,r)$ is a non-decreasing function of $r\in (0,R)$ and $\gamma(x,r)<\infty$ for all $r\in (0,R)$. That is,
\begin{equation}\label{e:gamma}
\gamma(x,r_{1})\leq \gamma(x,r_{2})<\infty \;\;  \mbox{ for } \;\; 0<r_{1}\leq r_{2}<R.
\end{equation}
\end{theorem}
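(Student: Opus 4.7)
The plan is to establish \eqref{e:gamma} by showing that $\frac{d}{dr}\log\gamma(x,r)\ge 0$ for a.e.\ $r\in(0,R)$. By translation, we may assume $x=0$ and suppress it from the notation. Writing $\gamma(r)=\Phi_{1}(r)\Phi_{2}(r)$ with
$$
\Phi_{i}(r)=\frac{1}{r^{2}}\int_{B(0,r)}\frac{|\nabla u_{i}(y)|^{2}}{|y|^{n-1}}\,dy,
$$
the finiteness of each $\Phi_{i}(r)$ on $(0,R)$ follows from $u_{i}\in W^{1,2}(B_{R})$ together with a Hardy-type estimate for the weight $|y|^{1-n}$, using that $u_{i}(0)=0$ and that $|y|^{1-n}$ is (up to a constant) the fundamental solution of $-\Delta$ in $\R^{n+1}$.

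The heart of the proof is to differentiate $\log\Phi_{i}$ and identify the derivative with a Rayleigh-type quotient on the sphere. Pass to polar coordinates $y=s\theta$, $s\in(0,r)$, $\theta\in S^{n}$, and decompose $|\nabla u_{i}|^{2}=(\partial_{s}u_{i})^{2}+s^{-2}|\nabla_{\theta}u_{i}|^{2}$. Applying Green's identity on the annulus $B_{r}\setminus B_{\ve}$ with the weight $|y|^{1-n}$ (harmonic on $\R^{n+1}\setminus\{0\}$), and using $u_{i}\ge 0$, $u_{i}(0)=0$, and $\Delta u_{i}\ge 0$ to discard a non-positive interior contribution, one obtains in the limit $\ve\to 0$
$$
r^{2}\,\Phi_{i}(r)\le r^{1-n}\int_{\partial B_{r}}u_{i}\,\partial_{\nu}u_{i}\,d\sigma .
$$
Combining this with the coarea differentiation identity
$$
\Phi_{i}'(r)=-\frac{2}{r}\,\Phi_{i}(r)+\frac{1}{r^{n+1}}\int_{\partial B_{r}}|\nabla u_{i}|^{2}\,d\sigma
$$
and the Cauchy--Schwarz inequality relating $\int_{\partial B_{r}}u_{i}\,\partial_{\nu}u_{i}\,d\sigma$ to $\int_{\partial B_{r}}u_{i}^{2}\,d\sigma$ and $\int_{\partial B_{r}}(\partial_{\nu}u_{i})^{2}\,d\sigma$, one arrives after rescaling to the unit sphere at
$$
\frac{\Phi_{i}'(r)}{\Phi_{i}(r)}\ge \frac{2}{r}\bigl(\alpha(\lambda_{i}(r))-1\bigr),
$$
where $\lambda_{i}(r)$ is the first Dirichlet eigenvalue of $-\Delta_{S^{n}}$ on
$$A_{i}(r):=\{\theta\in S^{n}\,:\,u_{i}(r\theta)>0\},$$
and $\alpha(\lambda)=-\tfrac{n-1}{2}+\sqrt{(\tfrac{n-1}{2})^{2}+\lambda}$ is the homogeneity exponent of the harmonic extension of the corresponding first eigenfunction.

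To conclude, observe that $u_{1}u_{2}\equiv 0$ forces $A_{1}(r)\cap A_{2}(r)=\emptyset$ for a.e.\ $r$. The Friedland--Hayman inequality asserts that for any two disjoint open sets $A_{1},A_{2}\subset S^{n}$,
$$
\alpha(\lambda(A_{1}))+\alpha(\lambda(A_{2}))\ge 2,
$$
with equality precisely for opposite hemispheres. Summing the two lower bounds on $\Phi_{i}'/\Phi_{i}$ yields
$$
\frac{d}{dr}\log\gamma(r)\ge \frac{2}{r}\bigl(\alpha(\lambda_{1}(r))+\alpha(\lambda_{2}(r))-2\bigr)\ge 0,
$$
which gives the desired monotonicity. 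I expect the main obstacle to be the careful bookkeeping in the differentiation step: the clean identification of the right-hand side as (two times) the Rayleigh quotient on $A_{i}(r)$, so that it can be bounded below by the spherical eigenvalue $\lambda_{i}(r)$, is the technical heart of the argument and is where the subharmonicity of $u_{i}$ is crucially used to kill the wrong-sign $u_{i}\Delta u_{i}$ term. The Friedland--Hayman inequality, though itself a deep spherical isoperimetric result, would simply be cited.
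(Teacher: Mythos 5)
The paper itself gives no proof of Theorem \ref{t:ACF}; it is stated and cited verbatim from Caffarelli and Salsa \cite[Theorem 12.3]{CS}, so there is no in-paper argument to compare against. Your sketch follows the standard (and indeed the cited) proof of the Alt--Caffarelli--Friedman monotonicity formula: differentiate $\log\gamma$, convert the solid-integral denominator to a boundary integral on $\partial B_{r}$ via Green's identity with the harmonic weight $|y|^{1-n}$, bound the resulting spherical Rayleigh quotient by the first Dirichlet eigenvalue of the positivity set, and finish with the Friedland--Hayman inequality for the two disjoint spherical domains. That is the right route, and all the key ingredients you name are the ones used in \cite{CS}.

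One concrete slip in the middle step should be fixed. The inequality you record after the annulus/Green's identity argument,
\[
r^{2}\Phi_{i}(r)\le r^{1-n}\int_{\partial B_{r}}u_{i}\,\partial_{\nu}u_{i}\,d\sigma,
\]
drops the second boundary term coming from $-u_{i}^{2}\,\partial_{\nu}|y|^{1-n}$ on $\partial B_{r}$. Since $\partial_{\nu}|y|^{1-n}=(1-n)r^{-n}$ there, that term equals $\tfrac{n-1}{2}\,r^{-n}\int_{\partial B_{r}}u_{i}^{2}\,d\sigma\ge 0$ and cannot be discarded from the right-hand side of an upper bound; the correct estimate is
\[
r^{2}\Phi_{i}(r)\le r^{1-n}\int_{\partial B_{r}}u_{i}\,\partial_{\nu}u_{i}\,d\sigma+\frac{n-1}{2}\,r^{-n}\int_{\partial B_{r}}u_{i}^{2}\,d\sigma.
\]
This is not a harmless omission: it is exactly this extra term that, once you apply Young's inequality to $\int u_{i}\partial_{\nu}u_{i}$ and the spherical Poincar\'e inequality $\int_{\partial B_{r}}u_{i}^{2}\le \lambda_{i}(r)^{-1}r^{2}\int_{\partial B_{r}}|\nabla_{\theta}u_{i}|^{2}$, and then optimize, produces the characteristic exponent $\alpha(\lambda)=-\tfrac{n-1}{2}+\sqrt{(\tfrac{n-1}{2})^{2}+\lambda}$ which you correctly quote; without it the optimization yields $\sqrt{\lambda}$ instead. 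As written, your intermediate inequality and your formula for $\alpha$ are mutually inconsistent. With that term restored, the sketch is a faithful summary of the Caffarelli--Salsa argument, and the only remaining care is the routine justification of the $\ve\to0$ limit on the inner sphere and of the a.e.\ differentiation of $\Phi_{i}$ under the hypotheses $u_{i}\in W^{1,2}\cap C$.
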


We remark that the preceding result was also stated in \cite{AMT}, although under somewhat stronger assumptions. In the current paper we will apply the preceding formula to the case when $\Omega_1$ and
$\Omega_2$ are disjoint Wiener regular domains, $x\in\partial \Omega_1\cap \partial \Omega_2$, with $u_1,u_2$ equal to the Green functions of $\Omega_1,\Omega_2$ with poles at $p_1,p_2$, extended by $0$ to $\Omega_1^c,\Omega_2^c$.  In this case, it is well known that $u_i\in W^{1,2}_{loc}(\R^{n+1}\setminus\{p_i\})\cap C(\R^{n+1}\setminus\{p_i\})$ for $i=1,2$ and so the assumptions of 
the preceding theorem are satisfied in any ball which does contain $p_1$ and $p_2$. 

\vv

Arguing as in \cite[Theorem 3.3]{KPT}, we obtain:

\begin{lemma}\label{l:beurling}
Let $\Omega_1,\Omega_2\subset \R^{n+1}$ be as in Theorem \ref{t:main}, and assume further that they are
Wiener regular. For $i=1,2$, let $\omega_i$ be the harmonic measure of $\Omega_i$ with pole at $p_i\in\Omega_i$. Let $0<R<\min_i \dist(p_i,\d\Omega_i)$. Then for for $0<r<R/4$ and $\xi\in \d\Omega_1\cap \d\Omega_2$,
\begin{equation}\label{e:beurling0}
\frac{\omega_i(B(\xi,r))}{r^{n}}\lesssim 
 \left(\frac1{r^{2}} \int_{B(\xi,2r)} 
\frac{|\grad u_i(y)|^{2}}{|y-\xi|^{n-1}}dy\right)^{\frac{1}{2}} 
\lesssim \left(\frac{1}{r^{n+3}} \int_{B(\xi,4r)}|u_i|^{2}\right)^{\frac{1}{2}}
\end{equation}
and in particular,
\begin{equation}\label{e:beurling}
\frac{\omega_1(B(\xi,r))}{r^{n}}\,\frac{\omega_2(B(\xi,r))}{r^{n}}\lec \gamma(\xi,2r)^{\frac{1}{2}},
\end{equation}
where $\gamma(\xi,2r)$ is defined by \rf{eq*123}.
\end{lemma}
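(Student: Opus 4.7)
The plan is to establish the two inequalities of \rf{e:beurling0} separately; the product estimate \rf{e:beurling} will then follow immediately by multiplying the first inequality for $i=1$ with that for $i=2$ and comparing with the definition \rf{eq*123} of $\gamma(\xi,2r)$. Throughout I will exploit two facts: since $r<R/4<\dist(p_i,\d\Omega_i)/4$, the pole $p_i$ lies outside $B(\xi,4r)$; and since $\Omega_i$ is Wiener regular, $u_i$ is continuous at $\xi$ with $u_i(\xi)=0$, and vanishes $\omega_i$-a.e.\ on $\d\Omega_i$.

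For the first inequality of \rf{e:beurling0} I would test the distributional identity $-\Delta u_i=\delta_{p_i}-\omega_i$ on $\R^{n+1}$ against a smooth cutoff $\phi\in C_c^\infty(B(\xi,2r))$ with $\phi\geq\chi_{B(\xi,r)}$, $\phi\leq 1$, and $|\grad\phi|\lec 1/r$. Since $\phi(p_i)=0$ and $u_i\in W^{1,2}_{\loc}(\R^{n+1}\setminus\{p_i\})$, integrating by parts gives
\[
\omega_i(B(\xi,r))\leq \int\phi\,d\omega_i=\Big|\int\grad u_i\cdot\grad\phi\,dy\Big|.
\]
Applying Cauchy--Schwarz with the weight split $|y-\xi|^{-(n-1)/2}\cdot|y-\xi|^{(n-1)/2}$ bounds this by
\[
\bigg(\int_{B(\xi,2r)}\frac{|\grad u_i|^2}{|y-\xi|^{n-1}}\,dy\bigg)^{1/2}\bigg(\int_{B(\xi,2r)\setminus B(\xi,r)}|\grad\phi|^2|y-\xi|^{n-1}\,dy\bigg)^{1/2}.
\]
The second factor is $\lec r^{n-1}$, using $|\grad\phi|\lec 1/r$, $|y-\xi|\leq 2r$ on the annular support of $\grad\phi$, and the volume bound $\lec r^{n+1}$. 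Dividing by $r^n$ yields the first inequality of \rf{e:beurling0}.

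For the second inequality, the key observation is that in dimension $n+1$ the function $\Phi(y):=|y-\xi|^{1-n}$ is, up to a positive constant, the fundamental solution of $-\Delta$, hence harmonic on $\R^{n+1}\setminus\{\xi\}$. Since $p_i\notin B(\xi,2r)$ and $u_i$ vanishes on $\supp\omega_i$, the identity $\Delta u_i^2=2|\grad u_i|^2$ holds as distributions on $B(\xi,2r)$. Applying Green's second identity to the pair $u_i^2,\Phi$ on $B(\xi,2r)\setminus B(\xi,\varepsilon)$ and letting $\varepsilon\to 0$ (the inner boundary contributions vanish because $u_i(\xi)=0$) produces
\[
\int_{B(\xi,2r)}\frac{|\grad u_i|^2}{|y-\xi|^{n-1}}\,dy=\frac{n-1}{2(2r)^n}\int_{\d B(\xi,2r)}u_i^2\,dS+\frac{1}{(2r)^{n-1}}\int_{B(\xi,2r)}|\grad u_i|^2\,dy.
\]
I then estimate both terms by standard tools: Caccioppoli for the nonnegative subharmonic $u_i$ gives $\int_{B(\xi,2r)}|\grad u_i|^2\,dy\lec r^{-2}\int_{B(\xi,4r)}u_i^2\,dy$; and the subharmonicity of $u_i^2$ implies the monotonicity of $s\mapsto s^{-n}\int_{\d B(\xi,s)}u_i^2\,dS$, so integrating over $s\in[2r,3r]$ yields $\int_{\d B(\xi,2r)}u_i^2\,dS\lec r^{-1}\int_{B(\xi,3r)}u_i^2\,dy$. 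Combining, one gets $\int_{B(\xi,2r)}|\grad u_i|^2|y-\xi|^{1-n}\,dy\lec r^{-(n+1)}\int_{B(\xi,4r)}u_i^2\,dy$, which after dividing by $r^2$ and taking a square root is exactly the second inequality of \rf{e:beurling0}.

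The main technical obstacle is the rigorous justification of the distributional identity $\Delta u_i^2=2|\grad u_i|^2$ on $B(\xi,2r)$ together with the $\varepsilon\to 0$ limit in Green's identity. The former relies essentially on Wiener regularity: the singular part of $\Delta u_i$ lives on $\d\Omega_i\subset\supp\omega_i$ and is annihilated by the multiplier $u_i$, which vanishes there. The latter is best handled by first replacing $\Phi$ with the smooth approximation $\Phi_\delta(y):=(|y-\xi|^2+\delta^2)^{(1-n)/2}$, applying Green's identity to the pair $u_i^2,\Phi_\delta$ (so that $-\Delta\Phi_\delta$ behaves as an approximate identity at $\xi$, where $u_i^2$ is continuous with value $0$), and then letting $\delta\to 0$ via monotone convergence, since $\Phi_\delta\uparrow\Phi$.
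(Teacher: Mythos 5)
Your proposal is correct and is essentially the argument that the paper delegates to \cite[Theorem 3.3]{KPT}: the first inequality of \rf{e:beurling0} comes from pairing the identity $-\Delta u_i=\delta_{p_i}-\omega_i$ with a cutoff and applying a weighted Cauchy--Schwarz, and the second from Green's identity against the fundamental solution $|y-\xi|^{1-n}$ of $\R^{n+1}$ together with Caccioppoli and the spherical mean monotonicity for the subharmonic function $u_i^2$. The one place to be careful, which you rightly flag, is the justification of $\Delta(u_i^2)=2|\grad u_i|^2$ as distributions on $B(\xi,2r)$ and the vanishing of the inner boundary terms in the limit; both go through because $u_i\in W^{1,2}_{\loc}\cap C$ is bounded near $\xi$ with $u_i(\xi)=0$, so that $u_i\,d\omega_i=0$ as a measure and the terms on $\partial B(\xi,\ve)$ are $o(1)$ as $\ve\to0$ (your regularization via $\Phi_\delta$ is a clean way to package this).
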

\vv

\begin{lemma}\label{l:largeinterior}
Let $\Omega_1\subset \R^{n+1}$ be a Wiener regular domain and denote by $\omega_1$ its harmonic measure with pole at $p_1 \in\Omega_1$. Let $B$ be a ball centered at $\partial\Omega_1$ such that $p_1\not\in 10B$. Suppose
that $\omega_1(4B)\leq C\,\omega_1(\delta_0 B)$ and $\HH^{n+1}(B\setminus\Omega_1)\geq C^{-1}r(B)^{n+1}$.
Then,
$$\HH^{n+1}(\Omega_1\cap2\delta_0 B)\gtrsim r(B)^{n+1}.$$ 
\end{lemma}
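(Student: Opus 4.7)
The plan is to sandwich $\omega_1(\delta_0 B)$ between two estimates: an upper bound obtained by integrating a smooth cutoff supported in $2\delta_0 B$ against the Green-function representation of harmonic measure, and the lower bound $\omega_1(4B)\leq C\omega_1(\delta_0 B)$ given by hypothesis. Cancelling $\omega_1(\delta_0 B)$ in the resulting chain leaves the desired lower bound on the Lebesgue measure of $\Omega_1\cap 2\delta_0 B$.

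The first step is to feed the density hypothesis $\mathcal H^{n+1}(B\setminus\Omega_1)\geq C^{-1}r(B)^{n+1}$ into Bourgain's estimate (Lemma~\ref{lembourgain}) with exponent $s=n+1$, applied at the ball $4B$, to obtain
\[
\omega_1^{z}(4B)\gtrsim 1\qquad\text{for every }z\in 2B\cap\Omega_1.
\]
Plugging this into \Lemma{w>G}, taken with ball $B$ and $a=4$, and using $p_1\in\Omega_1\setminus 2B$ (which holds since $p_1\notin 10B$), yields the pointwise upper bound
\[
G(p_1,y)\lesssim \frac{\omega_1(4B)}{r(B)^{n-1}}\qquad\text{for every }y\in B\cap\Omega_1,
\]
and in particular on $2\delta_0 B\cap\Omega_1\subset B\cap\Omega_1$.

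Next, fix a smooth cutoff $\varphi\in C_c^\infty(\R^{n+1})$ with $\chi_{\delta_0 B}\leq \varphi\leq \chi_{2\delta_0 B}$ and $\|\Delta\varphi\|_\infty\lesssim (\delta_0 r(B))^{-2}$. Since $p_1\notin 10B\supset\supp\varphi$ we have $\varphi(p_1)=0$, so the Green-function identity reduces to $\int\varphi\,d\omega_1=\int_{\Omega_1}G(p_1,y)\Delta\varphi(y)\,dy$. Taking absolute values inside the integral and inserting the pointwise bound on $G$ yields
\[
\omega_1(\delta_0 B)\leq \int\varphi\,d\omega_1\lesssim \frac{\omega_1(4B)}{(\delta_0 r(B))^{n+1}}\,\mathcal H^{n+1}(\Omega_1\cap 2\delta_0 B).
\]
Using the hypothesis $\omega_1(4B)\leq C\omega_1(\delta_0 B)$ and dividing by $\omega_1(\delta_0 B)$ (which is strictly positive because Bourgain combined with Harnack on the connected open set $\Omega_1$ forces $\omega_1(4B)>0$, and then the doubling forces $\omega_1(\delta_0 B)>0$) produces
\[
\mathcal H^{n+1}(\Omega_1\cap 2\delta_0 B)\gtrsim (\delta_0 r(B))^{n+1}\gtrsim r(B)^{n+1},
\]
with implicit constant depending on $n$, $C$, and $\delta_0$.

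The main delicate point is the very first step: ensuring that Bourgain's estimate can be invoked at a scale where the raw density information in $B$ translates into a pointwise lower bound on $\omega_1^{z}(4B)$ throughout $2B\cap\Omega_1$. Depending on how small the constant in Lemma~\ref{lembourgain} must be taken, one may need to cover $2B\cap\Omega_1$ by balls of radius $\sim\delta_0 r(B)$, apply Bourgain on each, and stitch the resulting lower bounds together with Harnack on the connected domain $\Omega_1$. The other point requiring justification is the Green-function representation formula on the merely Wiener regular (possibly unbounded) domain $\Omega_1$; this is standard once one truncates $\Omega_1$ by a large ball containing $p_1$ and the support of $\varphi$, and exploits the continuity of $G(p_1,\cdot)$ up to the boundary granted by Wiener regularity.
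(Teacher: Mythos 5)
Your proof follows essentially the same route as the paper's: bound $\omega_1(\delta_0 B)$ from above by integrating a bump function supported in $2\delta_0 B$ against the Green-function representation, control $\sup_{2\delta_0 B}u_1$ by combining Bourgain's estimate with Lemma~\ref{l:w>G}, and close with the doubling hypothesis. One detail, however, creates an unnecessary complication that you then try to patch, not quite successfully. You invoke Lemma~\ref{l:w>G} at the scale of $B$ itself with $a=4$, which forces you to bound $\inf_{z\in 2B\cap\Omega_1}\omega_1^z(4B)$ from below. But Bourgain's lemma applied to $4B$ with parameter $\delta_0$ only yields $\omega_1^z(4B)\gtrsim 1$ for $z$ in the much smaller set $4\delta_0 B\cap\Omega_1$, and the Harnack-chain patch you suggest does not close: two points of $2B\cap\Omega_1$ are a priori joined only by a path that may exit every fixed dilate of $B$, so no uniform bound on the Harnack chain length is available for a general Wiener regular domain. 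The cleaner move, and presumably what the paper's terse ``Bourgain's estimate $\ldots$ and Lemma~\ref{l:w>G}'' intends, is to apply Lemma~\ref{l:w>G} at the scale $2\delta_0 B$ with $a\sim\delta_0^{-1}$ so that $a\cdot 2\delta_0 B\subset 4B$; then the infimum runs only over $z\in 4\delta_0 B\cap\Omega_1$, exactly the region where Bourgain's estimate lands, and one still gets the Green-function bound on all of $2\delta_0 B\cap\Omega_1$, which is what the bump-function step requires (and $p_1\notin 10B\supset 4\delta_0 B$ guarantees $p_1$ lies outside the doubled ball). The remaining ingredients of your argument --- taking $s=n+1$ in Bourgain so Lebesgue measure of the complement drives the estimate, the identity $\int\varphi\,d\omega_1=\int\Delta\varphi\,u_1$, and the final cancellation of $\omega_1(\delta_0 B)$ --- match the paper's proof.
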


\begin{proof}
Let $\vphi$ be a non-negative bump function which equals $1$ on $\delta_0 B$ and is supported  on $2\delta_0 B$.
Then we have:
$$\omega_1(\delta_0 B)\leq \int \vphi \,d\omega_1 = \int \Delta\vphi(y)\,u_1(y)\,dy
\lesssim \frac1{r^2} \,\HH^{n+1}(\Omega_1\cap 2\delta_0 B)\,\sup_{y\in2\delta_0 B} u_1(y),$$
where $u_1$ is the Green function with pole at $p_1$. From Bourgain's estimate (taking into account that
$\HH^{n+1}(B\setminus\Omega_1)\geq C^{-1}r(B)^{n+1}$)
and Lemma \ref{l:w>G}
we deduce that 
$$\sup_{y\in2\delta_0 B} u_1(y)\lesssim \frac{\omega_1(4B)}{r^{n-1}},$$
and so
$$\omega_1(\delta_0 B)\lesssim \frac1{r^{n+1}} \,\HH^{n+1}(\Omega_1\cap 2\delta_0 B)\,\omega_1(4B).$$
Using then that $\omega_1(4B)\leq C\,\omega(\delta_0 B)$, the lemma follows.
\end{proof}

\vv

\begin{lemma}\label{l:dublemma}
Let $\Omega_1,\Omega_2\subset \R^{n+1}$ be as in Theorem \ref{t:main}, and assume further that they are
Wiener regular. For $i=1,2$, let $\omega_i$ be the harmonic measure of $\Omega_i$ with pole at $p_i\in\Omega_i$. 
 Let $B$ be a ball centered at $\partial\Omega_1\cap \partial\Omega_2$ such that $p_1,p_2\not\in 10B$. Suppose
that $\omega_i(4B)\leq C\,\omega_i(\delta_0B)$ for $i=1,2$.
Then,
$$\HH^{n+1}( 2\delta_0B\setminus \Omega_1)\approx \HH^{n+1}(2\delta_0B\setminus \Omega_2)\approx r(B)^{n+1}$$
and
$$\sup_{y\in \delta_0 B} u_i(y)\lesssim \frac{\omega_i(4B)}{r^{n-1}} \quad
\mbox{ for $i=1,2$.}$$
\end{lemma}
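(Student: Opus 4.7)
The plan is as follows. The upper bounds $\HH^{n+1}(2\delta_0 B\setminus \Omega_i)\lesssim r(B)^{n+1}$ are immediate from $2\delta_0 B\setminus \Omega_i\subset 2\delta_0 B$, so the real work is in the two lower bounds and the two sup bounds. The starting point is the observation that disjointness of $\Omega_1$ and $\Omega_2$ forces $B=(B\setminus\Omega_1)\cup (B\setminus\Omega_2)$, whence by subadditivity
\[
\HH^{n+1}(B\setminus\Omega_1)+\HH^{n+1}(B\setminus\Omega_2)\geq \HH^{n+1}(B)\gtrsim r(B)^{n+1}.
\]
In particular, after possibly relabeling, we have $\HH^{n+1}(B\setminus\Omega_1)\gtrsim r(B)^{n+1}$.

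Combined with the doubling hypothesis $\omega_1(4B)\leq C\omega_1(\delta_0 B)$, this puts us in position to invoke Lemma \ref{l:largeinterior} for $\Omega_1$, yielding $\HH^{n+1}(\Omega_1\cap 2\delta_0 B)\gtrsim r(B)^{n+1}$. The same proof extracts the pointwise bound $\sup_{y\in 2\delta_0 B}u_1(y)\lesssim \omega_1(4B)/r(B)^{n-1}$ as an intermediate step, coming from Bourgain's estimate (Lemma \ref{lembourgain}) and Lemma \ref{l:w>G}. Now disjointness runs in the opposite direction: since $\Omega_1\cap 2\delta_0 B\subset 2\delta_0 B\setminus \Omega_2$, we obtain
\[
\HH^{n+1}(2\delta_0 B\setminus \Omega_2)\gtrsim r(B)^{n+1},
\]
the first of the two desired lower bounds.

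Assuming $\delta_0\leq 1/2$ (which we may always arrange by shrinking the Bourgain constant), $2\delta_0 B\subset B$, so the previous estimate also gives $\HH^{n+1}(B\setminus \Omega_2)\gtrsim r(B)^{n+1}$. Lemma \ref{l:largeinterior} now applies to $\Omega_2$, producing $\HH^{n+1}(\Omega_2\cap 2\delta_0 B)\gtrsim r(B)^{n+1}$ together with the corresponding sup bound for $u_2$; one more use of disjointness ($\Omega_2\cap 2\delta_0 B\subset 2\delta_0 B\setminus \Omega_1$) supplies the remaining lower bound $\HH^{n+1}(2\delta_0 B\setminus \Omega_1)\gtrsim r(B)^{n+1}$, and restricting from $2\delta_0 B$ to $\delta_0 B$ preserves both sup bounds. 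The only conceptual subtlety is the lack of initial symmetry between $\Omega_1$ and $\Omega_2$: we cannot a priori assert large complement in $B$ for both domains simultaneously. This is resolved by the bootstrap above, in which Lemma \ref{l:largeinterior} converts ``large complement of $\Omega_1$'' into ``large interior of $\Omega_1$ in $2\delta_0 B$'', which by disjointness is ``large complement of $\Omega_2$'', restoring the missing symmetry.
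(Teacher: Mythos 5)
Your proof is correct and uses the same essential mechanism as the paper: a pigeonhole on a ball to produce a large complement for one domain, followed by Lemma~\ref{l:largeinterior} to convert large complement into large interior, which by disjointness becomes a large complement for the other domain, with the sup bounds dropping out of the proof of Lemma~\ref{l:largeinterior} (via Bourgain's estimate and Lemma~\ref{l:w>G}). The only structural difference is that the paper pigeonholes directly on $2\delta_0 B$ (using $2\delta_0 B=(2\delta_0 B\setminus\Omega_1)\cup(2\delta_0 B\setminus\Omega_2)$), so it immediately gets $\HH^{n+1}(2\delta_0 B\setminus\Omega_i)\gtrsim r(B)^{n+1}$ for one $i$ and needs only a single application of Lemma~\ref{l:largeinterior} to obtain the other; you pigeonhole on the larger ball $B$, which forces a two-pass bootstrap (Lemma~\ref{l:largeinterior} applied to $\Omega_1$, then to $\Omega_2$) to recover both complement bounds in $2\delta_0 B$. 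Both variants are valid; the paper's is marginally more economical, but your version makes the role of disjointness as a symmetry-restoring device more explicit.
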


\begin{proof}
There exists $1\leq i\leq2$ such that 
\begin{equation}\label{e:2db} \HH^{n+1}(2\delta_{0} B\setminus \Omega_{i})\geq C^{-1}r(B)^{n+1}.
\end{equation}
Suppose this holds for $\Omega_1$, then  we only need to show now that $\HH^{n+1}(2\delta_{0} B\setminus\Omega_2)\geq C^{-1} r(B)^{n+1}$. Since $\delta_{0}<1/2$, $B\supset 2\delta_{0} B$, and hence \eqref{e:2db} implies 
$\HH^{n+1}(B\setminus\Omega_1)\geq C^{-1} r(B)^{n+1}$. From Lemma \ref{l:largeinterior}, we infer that 
$$\HH^{n+1}(2\delta_0 B\setminus\Omega_2)\geq \HH^{n+1}(\Omega_1\cap2\delta_0 B)\gtrsim r(B)^{n+1}.$$
%Also, by assumption,
%$$\HH^{n+1}(\Omega_1^c\cap 2\delta_0 B)\geq \HH^{n+1}(\Omega_1^{c} \cap B)\geq C^{-1} r(B)^{n+1}.$$
%%$$\HH^{n+1}(\Omega_1^c\cap 4\delta_0 B)\geq \HH^{n+1}(\Omega_2\cap4\delta_0 B)\gtrsim r(B)^{n+1}.$$
The second statement in the lemma follows from these estimates in combination with Bourgain's estimate and 
Lemma \ref{l:w>G}.
\end{proof}

\vv
\begin{lemma}
\label{l:otherside}
Let $\Omega_1,\Omega_2\subset \R^{n+1}$ be as in Theorem \ref{t:main}, and assume further that they are
Wiener regular. For $i=1,2$, let $\omega_i$ be the harmonic measure of $\Omega_i$ with pole at $p_i\in\Omega_i$ . Let $0<R<\min_i \dist(p_i,\d\Omega_i)$.
 Let $\xi\in \d\Omega_1\cap \d\Omega_2$ and $r< \delta_0 R/4$, $i=1,2$. 
 Suppose
that $\omega_i(B(\xi,4r))\leq C\,\omega_i(B(\xi,\delta_0 r))$ for $i=1,2$. Then we have
\begin{equation}\label{eq:avgGreen-harm}
\left(\frac{1}{r^{n+1}} \int_{B(\xi,r) \cap \Omega_i}|\nabla u_i|^{2}\right)^{\frac{1}{2}}  \lesssim \left(\frac{1}{r^{n+3}} \int_{B(\xi, 2r)\cap \Omega_i}|u_i|^{2}\right)^{\frac{1}{2}} \lesssim \frac{\omega_i(B(\xi,8 \delta_0^{-1}r))}{r^{n}}.
\end{equation}
In particular, 
\begin{equation}\label{e:otherside}
\gamma(\xi,r)^{\frac{1}{2}}\lec \frac{\omega_1(B(\xi,8\delta_0^{-1}r))}{r^{n}}
\frac{\omega_2(B(\xi,8\delta_0^{-1}r))}{r^{n}},
\end{equation}
where $\gamma(\xi,r)$ is defined by \rf{eq*123}.
\end{lemma}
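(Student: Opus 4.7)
My approach will be to first obtain a uniform pointwise upper bound on the Green functions $u_{i}$ throughout $B(\xi,2r)\cap\Omega_{i}$, from which the two inequalities in \eqref{eq:avgGreen-harm} will drop out by a volume estimate and a standard Caccioppoli argument respectively, and then to derive \eqref{e:otherside} by threading these through the weighted Caccioppoli bound already supplied by Lemma \ref{l:beurling}. The pointwise bound I would aim for is $\sup_{B(\xi,2r)\cap\Omega_{i}}u_{i}\lesssim \omega_{i}(B(\xi,8\delta_{0}^{-1}r))/r^{n-1}$. To prove it, I would set $B_{0}:=B(\xi,2r)$ and apply Lemma \ref{l:w>G} with dilation $a:=4\delta_{0}^{-1}$ and pole $p_{i}$ (which lies outside $2B_{0}$ because $4r<R\le \dist(p_{i},\d\Omega_{i})$). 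The only nontrivial ingredient is that the infimum $\inf_{z\in 2B_{0}\cap\Omega_{i}}\omega_{i}^{z}(aB_{0})$ in the denominator of the resulting estimate should be $\gtrsim 1$; for this I would invoke Bourgain's estimate (Lemma \ref{lembourgain}) applied to $aB_{0}=B(\xi,8\delta_{0}^{-1}r)$ with parameter $\delta:=\delta_{0}/2$ (so that $\delta aB_{0}=B(\xi,4r)\supset 2B_{0}$) and exponent $s=n+1$. The Hausdorff-content bound $\cH^{n+1}_{\infty}(B(\xi,4r)\setminus\Omega_{i})\gtrsim r^{n+1}$ needed to activate Bourgain comes for free: Lemma \ref{l:dublemma} applied to $B:=B(\xi,r)$---whose doubling hypothesis is exactly what the present lemma assumes---gives $\cH^{n+1}(B(\xi,2\delta_{0}r)\setminus\Omega_{i})\gtrsim r^{n+1}$, and $\cH^{n+1}_{\infty}$ dominates Lebesgue measure in $\R^{n+1}$.

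With the pointwise bound in hand, squaring and integrating over $B(\xi,2r)\cap\Omega_{i}$ (whose volume is $\lesssim r^{n+1}$) and dividing by $r^{n+3}$ will immediately yield the second inequality of \eqref{eq:avgGreen-harm}. For the first inequality I would use a routine Caccioppoli estimate: extending $u_{i}$ by $0$ outside $\Omega_{i}$ makes it a nonnegative subharmonic function on $\R^{n+1}\setminus\{p_{i}\}$, so testing against a cutoff $\phi\equiv 1$ on $B(\xi,r)$, supported in $B(\xi,2r)$, with $|\nabla\phi|\lesssim r^{-1}$, gives $\int_{B(\xi,r)}|\nabla u_{i}|^{2}\lesssim r^{-2}\int_{B(\xi,2r)}u_{i}^{2}$. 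Finally, to obtain \eqref{e:otherside}, I would apply the second inequality of \eqref{e:beurling0} at scale $r/2$ to get $\bigl(r^{-2}\int_{B(\xi,r)}|\nabla u_{i}|^{2}/|y-\xi|^{n-1}\,dy\bigr)^{1/2}\lesssim \bigl(r^{-(n+3)}\int_{B(\xi,2r)}u_{i}^{2}\bigr)^{1/2}$, chain this with the second inequality of \eqref{eq:avgGreen-harm}, and take the product over $i=1,2$, recalling the definition \eqref{eq*123} of $\gamma(\xi,r)$.

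The delicate step will be extracting the pointwise bound, since the doubling hypothesis is posed at scale $r$ while a sup bound is needed throughout the larger ball $B(\xi,2r)$. The key point is that the doubling is never used directly on $\omega_{i}$ at that step; it is needed only to guarantee, via Lemma \ref{l:dublemma}, the complementary Hausdorff-content lower bound that powers Bourgain. Once this is in place, Lemma \ref{l:w>G} with the dilation $a=4\delta_{0}^{-1}$ naturally produces $\omega_{i}$ of the larger ball $B(\xi,8\delta_{0}^{-1}r)$ in the final bound---exactly the form appearing on the right-hand side of the lemma.
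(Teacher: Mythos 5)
Your proposal is correct and is essentially the paper's proof, unpacked. The paper's own argument is terse: extend $u_i$ by zero, invoke Caccioppoli for subharmonic functions together with \Lemma{dublemma}, and assert that \eqref{eq:avgGreen-harm} then yields \eqref{e:otherside}. You fill in two spots where the paper is silent, and both fills are sound.

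First, the pointwise bound. A direct application of \Lemma{dublemma} at scale $B=B(\xi,r)$ (the only scale at which the doubling hypothesis is stated) gives $\sup_{B(\xi,\delta_0 r)}u_i\lesssim\omega_i(B(\xi,4r))/r^{n-1}$, which is a sup over the \emph{smaller} ball $B(\xi,\delta_0 r)$, whereas the Caccioppoli step needs $u_i$ controlled on $B(\xi,2r)$. You sidestep this by applying \Lemma{dublemma} only for the volume lower bound $\cH^{n+1}(B(\xi,2\delta_0 r)\setminus\Omega_i)\gtrsim r^{n+1}$ (at scale $B=B(\xi,r)$, exactly where the hypothesis lives), feeding this into Bourgain (\Lemma{lembourgain}) to get $\inf_{z\in B(\xi,4r)\cap\Omega_i}\omega_i^z(B(\xi,8\delta_0^{-1}r))\gtrsim 1$, and then applying \Lemma{l:w>G} with $B=B(\xi,2r)$ and $a=4\delta_0^{-1}$ to land the sup bound on the correct ball with the correct right-hand side $\omega_i(B(\xi,8\delta_0^{-1}r))/r^{n-1}$. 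This is in effect re-running the proof of the second conclusion of \Lemma{dublemma} at the scale actually needed, and it is the right fix. (The comparability $\cH^{n+1}_\infty\approx\cH^{n+1}\approx$ Lebesgue in $\R^{n+1}$, which you invoke, is standard and correct.) Second, for \eqref{e:otherside}: the unweighted estimate \eqref{eq:avgGreen-harm} does not by itself control the weighted integral $\int_{B(\xi,r)}|\nabla u_i|^2/|y-\xi|^{n-1}$ appearing in $\gamma(\xi,r)$, since the weight is $\gtrsim r^{-(n-1)}$ on $B(\xi,r)$; you correctly route instead through the second inequality of \eqref{e:beurling0} applied at scale $r/2$, which is the weighted Caccioppoli estimate, and then chain it with the $L^2$ bound on $u_i$. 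This is the logically clean way to close the loop that the paper compresses into ``which in turn implies.''

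Two small checks worth recording explicitly if you write this up: (i) the hypothesis $p_1,p_2\notin 10B$ needed for \Lemma{dublemma} at $B=B(\xi,r)$ holds because $r<\delta_0 R/4$ and $\delta_0$ is small, so $10r<R\leq\dist(p_i,\d\Omega_i)$; (ii) the Caccioppoli step uses that the zero-extended $u_i$ is subharmonic and lies in $W^{1,2}_{\rm loc}$ away from $p_i$, and that $p_i\notin B(\xi,2r)$, both of which follow from the radius restriction.
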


\begin{proof}
Since $u_i$ vanishes continuously at the boundary of $\d \Omega_i$, we may extend it by zero in $\R^{n+1} \setminus \Omega_i$. Then, as the extended function (which we still denote by $u_i$) is non-negative and subharmonic in $\R^{n+1}$, by Caccioppoli's inequality (which still holds for subharmonic functions) and Lemma \ref{l:dublemma}, we infer that
$$
\left( \int_{B(\xi,r)}|\nabla u_i|^{2}\right)^{\frac{1}{2}}  \lesssim \left(\frac{1}{r^2} \int_{B(\xi, 2r)}(u_i)^{2}\right)^{\frac{1}{2}} \lesssim \omega_i(B(\xi,8 \delta_0^{-1}r)) \,r^{\frac{1-n}{2}}.
$$
 This shows \eqref{eq:avgGreen-harm}, which in turn implies \eqref{e:otherside}.
\end{proof}

\vvv

\section{Tangent measures}\label{s:tan}

For $a\in\R^{n+1}$ and $r>0$, we consider the map
$$T_{a,r}(x) = \frac{x-a}{r}.$$
Note that 
\[
T_{a,r}(B(a,r))=\bB:=B(0,1).\] 
Recall also that, given a Radon measure $\mu$, the notation $T_{a,r}[\mu]$ stands for the image measure of $\mu$ by $T_{a,r}$.
That is,
$$T_{a,r}[\mu](A) = \mu(rA+a),\qquad A\subset\R^{n+1}.$$

Given two Radon measure $\mu$ and $\sigma$, we set
$$F_{B}(\mu,\sigma) = \sup_f \int f\,d(\mu-\sigma),$$
where the supremum is taken over all the $1$-Lipschitz functions supported on $B$. 
For $r>0$, we write
\[
F_{r}(\mu,\nu)= F_{\overline B(0,r)}(\mu,\nu),\qquad
F_{r}(\mu)=F_{r}(\mu,0)=\int (r-|z|)_{+} d\mu.\]

\begin{definition} \cite[Section 2]{Preiss} 
\begin{enumerate}[(a)]
\item A set $\cM$ of non-zero Radon measures in $\bR^{n+1}$ is a {\it cone} if $c\mu\in \cM$ whenever $\mu\in \cM$ and $c>0$.
\item A cone $\cM$ is a {\it d-cone} if $T_{0,r}[\mu]\in \cM$ for all $\mu\in \cM$ and $r>0$.
\item The {\it basis} of a $d$-cone $\cM$ is the set $\{\mu\in \cM: F_{1}(\mu)=1\}$.
\item For a $d$-cone $\cM$, $r>0$, and $\mu$ a Radon measure with $0<F_{r}(\mu)<\infty$, we define the {\it distance} between $\mu$ and $\cM$ as 
%\xavi{I have modified the definition of $d_r$ to guaranty the validity of the identities in \rf{e:ojF}}
\[
d_{r}(\mu,\cM)=\inf\ck{F_{r}\ps{\frac{\mu}{F_{r}(\mu)},\nu}: \nu\in \cM, F_{r}(\nu)=1 }
\]
\end{enumerate}
\end{definition}

\begin{lemma}[\cite{KPT} Section 2]
Let $\mu,\nu$ be Radon measures in $\bR^{n+1}$ and $\cM$ a $d$-cone. For $\xi\in \bR^{n+1}$ and $r>0$,
\begin{enumerate}
\item $T_{\xi,r}[\mu](B(0,s))=\mu(B(\xi,sr))$,
\item $\int fd T_{\xi,r}[\mu] = \int f\circ T_{\xi,r} d\mu$,
\item $F_{B(\xi,r)}(\mu)=r F_{1}(T_{\xi,r}[\mu])$,
\item $F_{B(\xi,r)}(\mu,\nu)=rF_{1}(T_{\xi,r}[\mu],T_{\xi,r}[\nu])$,
\item $\mu_{i}\rightarrow \mu$ weakly if and only if $F_{r}(\mu_{i},\mu)\rightarrow 0$ for all $r>0$,
\item $d_{r}(\mu,\cM)\leq 1$,
\item $d_{r}(\mu,\cM)=d_{1}(T_{0,r}[\mu],\cM)$,
\item if $\mu_{i}\rightarrow \mu$ and $F_{r}(\mu)>0$, then $d_{r}(\mu_{i},\cM)\rightarrow d_{r}(\mu,\cM)$. 
\end{enumerate}
\end{lemma}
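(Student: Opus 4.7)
The lemma collects eight essentially routine facts about the pushforward $T_{\xi,r}[\mu]$, the Wasserstein-type pairing $F_{B}(\mu,\nu)$, and the distance function $d_{r}(\cdot,\cM)$. The plan is to verify the eight items in order, since each depends only on the definitions and on the items that precede it; the main subtlety is the bookkeeping around the scaling factors coming from the rescaling $T_{\xi,r}$ and the $d$-cone structure, not any conceptual difficulty.

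First, items (1) and (2) are immediate from the definition of pushforward: substituting $A=B(0,s)$ in $T_{\xi,r}[\mu](A)=\mu(rA+\xi)$ gives $rA+\xi=B(\xi,rs)$, which yields (1); approximating $f$ by simple functions and applying (1) to level sets of $f$ yields (2). For (3) and (4), the key observation is that the map $f\mapsto g$ defined by $g(y):=f(ry+\xi)/r$ is a bijection between 1-Lipschitz functions supported in $B(\xi,r)$ and 1-Lipschitz functions supported in $\overline\bB$, since $f$ is 1-Lipschitz iff $g$ is 1-Lipschitz and the supports correspond under $T_{\xi,r}$. Then (2) gives
\[
\int f\,d\mu=\int f\circ T_{\xi,r}^{-1}\,dT_{\xi,r}[\mu]=r\int g\,dT_{\xi,r}[\mu],
\]
and taking suprema over admissible $f$ on the left and over admissible $g$ on the right yields (3). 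Replacing $d\mu$ by $d(\mu-\nu)$ throughout gives (4).

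Item (5) is the standard characterization of weak convergence of Radon measures: the class of compactly supported 1-Lipschitz functions is a separating (indeed norming) family for $C_{c}(\bR^{n+1})^{*}$, so weak convergence is equivalent to $F_{r}(\mu_{i},\mu)\to 0$ on every ball $\overline B(0,r)$. One direction is immediate from weak testing against 1-Lipschitz functions; the other uses a density argument, approximating continuous compactly supported functions by Lipschitz ones.

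Finally, for (6)--(8) I would combine the earlier parts with elementary arguments. Item (6) follows from the triangle inequality for $F_{r}$ together with the normalizations $F_{r}(\mu/F_{r}(\mu))=F_{r}(\nu)=1$ that appear in the definition of $d_{r}$. Item (7) is a direct consequence of (3) and (4) combined with the $d$-cone invariance: replacing every test integral by its image under $T_{0,r}$ and observing that $T_{0,r}[\nu]\in\cM$ whenever $\nu\in\cM$ (with $F_{1}(T_{0,r}[\nu])=1$ iff $F_{r}(\nu)=r$, by (3)) identifies the infimum defining $d_{r}(\mu,\cM)$ with the one defining $d_{1}(T_{0,r}[\mu],\cM)$. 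For (8), if $\mu_{i}\to\mu$ weakly with $F_{r}(\mu)>0$, then by (5) the function $F_{r}$ is continuous at $\mu$, hence $\mu_{i}/F_{r}(\mu_{i})\to\mu/F_{r}(\mu)$ weakly, and the continuity of the infimum defining $d_{r}$ under this perturbation yields convergence of the distances. No step here presents a genuine obstacle; the only care required is in (3), (4), and (7) to track the factors of $r$ correctly under the change of variable.
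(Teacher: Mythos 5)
Note first that the paper supplies no proof here: the lemma is imported verbatim from Section~2 of Kenig--Preiss--Toro \cite{KPT} (going back to Preiss \cite{Preiss}), so there is no internal argument to compare against. On its own terms, your items (1)--(5) and (8) are correct, and the substitution $g(y)=f(ry+\xi)/r$ in (3)--(4) is the right device. For (7) the idea is right but the stated correspondence is off by a scaling: if $\nu\in\cM$ with $F_r(\nu)=1$, then (3) gives $F_1(T_{0,r}[\nu])=1/r$, not $1$, so the bijection between the two normalized families should be $\nu\mapsto rT_{0,r}[\nu]$ (still in $\cM$ since $\cM$ is a cone); together with the homogeneity $F_1(c\sigma,c\tau)=cF_1(\sigma,\tau)$ this closes the identity $F_r(\mu/F_r(\mu),\nu)=F_1\big(T_{0,r}[\mu]/F_1(T_{0,r}[\mu]),\,rT_{0,r}[\nu]\big)$.

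The genuine gap is item (6). The triangle inequality gives
\[
F_r\Big(\tfrac{\mu}{F_r(\mu)},\nu\Big)\le F_r\Big(\tfrac{\mu}{F_r(\mu)}\Big)+F_r(\nu)=1+1=2,
\]
not $1$, and one cannot improve this to $1$ in general: with the definitions in the paper the bound $d_r(\mu,\cM)\le 1$ is actually false. For a counterexample take $n+1=2$, $r=1$, $b=(0,3/4)$, $a=(3/4,0)$, $\mu=4\delta_b$ (so that $F_1(\mu)=1$), and let $\cM=\{c\delta_{ta}:c,t>0\}$ be the $d$-cone generated by $\delta_a$. For any $\nu=c\delta_{ta}\in\cM$ with $F_1(\nu)=1$ (so $c=(1-3t/4)^{-1}$, $0<t<4/3$), the $1$-Lipschitz function $f(x)=\max\big(\tfrac14-|x-b|,\,-(1-|x|)_+\big)$ is supported in $\overline B(0,1)$, satisfies $f(b)=\tfrac14$ and $f(ta)=\max\big(\tfrac14-\tfrac34\sqrt{t^2+1},\,-(1-\tfrac{3t}{4})\big)$, and a direct computation gives $\int f\,d(\mu-\nu)\ge\tfrac32$ for every such $t$; hence $F_1(\mu,\nu)\ge\tfrac32$ for every admissible $\nu$ and $d_1(\mu,\cM)\ge\tfrac32>1$ (in fact $d_1(\mu,\cM)=\tfrac32$, approached as $t\to0$). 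Preiss's original treatment records the bound $\le 2$, so the $\le 1$ here is almost surely a transcription slip. Your argument proves the correct bound $\le 2$; just do not assert that the triangle inequality yields $\le 1$.
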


\begin{theorem}[\cite{Preiss} Corollary 2.7] Let $\mu$ be a Radon measure on $\bR^{n+1}$, and $\xi\in \supp \mu$. Then $\Tan(\mu,\xi)$ has compact basis if and only if 
\begin{equation}
\label{e:compactdouble}
\limsup_{r\rightarrow 0} \frac{\mu(B(\xi,2r))}{\mu(B(\xi,r))}<\infty.
\end{equation}
In this case, $0\in \supp \nu$ for all $\nu\in \Tan(\mu,\xi)$, and 
\[
\frac{\nu(B(0,2r))}{\nu(B(0,r))}\leq \limsup_{r\rightarrow 0} \frac{\mu(B(\xi,2r))}{\mu(B(\xi,r))}\quad\mbox{ for all }r>0.
\] 
\label{t:compactdouble}
\end{theorem}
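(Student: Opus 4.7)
The plan is to prove both implications via the classical Preiss framework, coupling local uniform boundedness (which yields vague sequential compactness of Radon measures) with the doubling property of $\mu$ at $\xi$.

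For $(\Leftarrow)$, assume $D := \limsup_{r \to 0} \mu(B(\xi, 2r))/\mu(B(\xi, r)) < \infty$ and fix any $D' > D$, so that the doubling bound $\mu(B(\xi, 2r))/\mu(B(\xi, r)) \leq D'$ holds for all sufficiently small $r$. First I would check that every $\nu \in \Tan(\mu, \xi)$ inherits this doubling: writing $\nu = \lim c_i T_{\xi, r_i}[\mu]$ vaguely with $r_i \downarrow 0$, apply the doubling of $\mu$ at scales $r_i s$ (valid for $i$ large), and pass to the limit along the co-countable set of continuity radii $s$ of $\nu$, obtaining $\nu(B(0, 2s)) \leq D' \nu(B(0, s))$ for all $s > 0$; letting $D' \downarrow D$ gives the quantitative bound in the theorem. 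Since $F_1(\nu) \leq \nu(B(0, 1))$, any basis element has $\nu(B(0, 1)) \geq 1$, and reverse doubling $\nu(B(0, 2^{-k})) \geq \nu(B(0, 1))/D^k > 0$ places $0 \in \supp \nu$.

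Next, from $F_1(\nu) \geq \tfrac{1}{2} \nu(B(0, \tfrac{1}{2}))$ we get $\nu(B(0, \tfrac{1}{2})) \leq 2$, and iterating forward doubling gives the uniform bound $\nu(B(0, 2^k)) \leq 2 D^{k+1}$ on the basis. This makes the basis vaguely pre-compact. For closedness, note that $F_1$ is continuous under vague convergence since $(1 - |z|)_+$ is continuous and compactly supported, so any vague limit of basis elements satisfies $F_1(\nu) = 1$; a standard diagonal argument (choosing approximating rescalings in a metric for vague convergence) places the limit back in $\Tan(\mu, \xi)$. Hence the basis is compact.

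For $(\Rightarrow)$, I argue contrapositively. Suppose $\limsup = \infty$ and choose $r_i \downarrow 0$ with $\mu(B(\xi, 2r_i))/\mu(B(\xi, r_i)) \to \infty$. Form the $F_1$-normalized rescalings $\hat\mu_i := T_{\xi, r_i}[\mu]/F_1(T_{\xi, r_i}[\mu])$. Since $F_1(T_{\xi, r_i}[\mu]) \leq \mu(B(\xi, r_i))$, we have $\hat\mu_i(B(0, 2)) \geq \mu(B(\xi, 2r_i))/\mu(B(\xi, r_i)) \to \infty$. Any vaguely convergent subsequence would yield an element $\nu$ of the basis (by the diagonal argument and continuity of $F_1$) with $\nu(\bar B(0, 2)) \geq \limsup \hat\mu_i(\bar B(0, 2)) = \infty$ by portmanteau on the compact set $\bar B(0,2)$, contradicting local finiteness; hence $(\hat\mu_i)$ is not vaguely pre-compact.

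The main obstacle is to convert this failure of pre-compactness at the level of the rescalings $\hat\mu_i$ (which are not themselves in $\Tan(\mu, \xi)$) into a failure of compactness of the basis. The resolution is to exploit the $d$-cone structure: since $\sup_i \hat\mu_i(\bar B(0, R)) = \infty$ for some $R > 0$, one re-normalizes at the enlarged scale $R r_i$ to produce an $F_1$-normalized rescaling whose local mass on a unit ball still reflects the original blow-up, but now at a scale where vague subsequential limits do exist and land in the basis, producing basis elements with unbounded local mass and contradicting compactness. Carrying out this re-centering while preserving the quantitative mass growth across the rescaling is the most delicate step.
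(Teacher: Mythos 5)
The paper cites this as \cite{Preiss}, Corollary 2.7, and does not reproduce a proof, so there is no paper argument to compare against; I evaluate your attempt on its own terms. Your $(\Leftarrow)$ direction and the two corollary statements ($0\in\supp\nu$ and the doubling bound for tangent measures) are essentially correct: the passage of doubling from $\mu$ to $\nu$ along continuity radii, the reverse-doubling argument to place $0\in\supp\nu$, the uniform local bound $\nu(B(0,2^k))\leq 2D^{k+1}$ on the basis (hence vague pre-compactness), and closedness via continuity of $F_1$ combined with the closedness of $\Tan(\mu,\xi)\cup\{0\}$ (Mattila, Lemma 14.9) are all sound, modulo the observation that one may always normalize so $F_1(\nu)>0$ since $\Tan(\mu,\xi)$ is a $d$-cone.

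The $(\Rightarrow)$ direction, however, has a genuine gap that the proposed fix does not close. You correctly identify the obstacle — the rescalings $\hat\mu_i$ are not themselves tangent measures, so their failure of vague pre-compactness does not directly contradict compactness of the basis — but re-normalizing at scale $Rr_i$ does not achieve what you claim. Writing $\sigma_i := T_{\xi,2r_i}[\mu]/F_1(T_{\xi,2r_i}[\mu])$, one has $\sigma_i(B(0,1))\geq 1$ always, and the divergent quantity $\mu(B(\xi,2r_i))/\mu(B(\xi,r_i))$ appears only as the ratio $\sigma_i(B(0,1))/\sigma_i(B(0,1/2))$. This ratio can blow up for two very different reasons: either $\sigma_i(B(0,1))\to\infty$, which does give failure of pre-compactness, or $\sigma_i(B(0,1/2))\to 0$ with $\sigma_i$ locally bounded, in which case a subsequence converges vaguely to a nonzero tangent measure $\nu$ in the basis with $\nu(B(0,1/2))=0$, i.e.\ $0\notin\supp\nu$. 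In that second case you have not produced a basis element of unbounded mass; you have produced a tangent measure avoiding the origin, and to turn that into a contradiction with compact basis you need a separate argument (e.g.\ rescaling $T_{0,t}[\nu]$ as $t\downarrow\rho$ where $\rho=\dist(0,\supp\nu)$, noting $F_1(T_{0,t}[\nu])\to 0$ while mass on a fixed ball stays positive, so the $F_1$-normalized family blows up). Moreover, even the dichotomy above is not clean: which case holds may depend on the scale, so one must iterate the zoom-out to locate a critical scale, and it is not obvious this terminates. The standard route (Preiss, Proposition 2.2 together with the surrounding lemmas, or Mattila's treatment near Theorem 14.18) instead characterizes compact basis by a uniform $F_{\tau r}\leq q\,F_r$ estimate on the cone and transfers it to $\mu$; your sketch omits this machinery, and since the paper does rely on $(\Rightarrow)$ (to obtain pointwise doubling of $\omega_1$ from $\Tan(\omega_1,\xi)\subset\cF$), the gap is not cosmetic.
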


\begin{lemma}\cite[Lemma 14.6]{Mattila} Let $\mu$ be a Radon measure on $\bR^{n}$, $\phi$  a non-negative locally integrable function on $\bR^{n+1}$, and $\lambda$ the Radon measure such that
$\lambda(B)= \int_{B}\phi d\mu$ for all Borel sets $B$. Then $\Tan(\mu,x)=\Tan(\lambda,x)$ for $\lambda$-almost all $x\in \bR^{n+1}$.
\label{l:abstan}
\end{lemma}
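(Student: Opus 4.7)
The plan is to use the Lebesgue--Besicovitch differentiation theorem to show that near $\lambda$-a.e.\ point the density $\phi$ is well-approximated by the constant $\phi(x)$, so that the rescaled measures $T_{x,r}[\lambda]$ and $\phi(x)\,T_{x,r}[\mu]$ agree asymptotically as $r\to 0$. First I would fix the full $\lambda$-measure set of ``good'' points $x$ at which $\phi(x)>0$ (which is $\lambda$-a.e., since $\lambda$ is concentrated on $\{\phi>0\}$) and at which the Lebesgue point condition
\[
\lim_{r\to 0}\frac{1}{\mu(B(x,r))}\int_{B(x,r)}|\phi(y)-\phi(x)|\,d\mu(y)=0
\]
holds. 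Local integrability of $\phi$ with respect to $\mu$ is exactly the hypothesis needed to invoke Besicovitch's theorem for this, so this good set has full $\lambda$-measure.

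Next I would fix such a good $x$, take $\nu\in\Tan(\mu,x)$ realized as $c_i T_{x,r_i}[\mu]\to\nu$ weakly for some $r_i\downarrow 0$ and $c_i>0$, and verify that $\phi(x)\nu\in\Tan(\lambda,x)$. For a continuous test function $f$ with $\supp f\subset B(0,R)$ a direct change of variables gives
\[
\int f\,d(c_i T_{x,r_i}[\lambda])-\phi(x)\int f\,d(c_i T_{x,r_i}[\mu])=c_i\int (f\circ T_{x,r_i})(\phi-\phi(x))\,d\mu,
\]
and this is bounded in absolute value by
\[
\|f\|_\infty\,\bigl(c_i\,\mu(B(x,Rr_i))\bigr)\cdot\frac{1}{\mu(B(x,Rr_i))}\int_{B(x,Rr_i)}|\phi-\phi(x)|\,d\mu.
\]
The first factor equals $\|f\|_\infty\,c_i T_{x,r_i}[\mu](B(0,R))$, which stays bounded in $i$ by weak convergence (choosing $R$ with $\nu(\partial B(0,R))=0$, which is available for all but countably many $R$), while the second factor tends to $0$ by the Lebesgue point property. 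Hence $c_i T_{x,r_i}[\lambda]\to\phi(x)\nu$ weakly, so $\phi(x)\nu\in\Tan(\lambda,x)$; since $\Tan(\lambda,x)$ is a $d$-cone and $\phi(x)>0$, this gives $\nu\in\Tan(\lambda,x)$.

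For the reverse inclusion $\Tan(\lambda,x)\subset\Tan(\mu,x)$ I would apply the symmetric argument: on $\{\phi>0\}$ one has $d\mu=(1/\phi)\,d\lambda$, and $1/\phi\in L^{1}_{loc}(\lambda)$ because $\int(1/\phi)\,d\lambda=\int d\mu$ is locally finite, so the analogous Lebesgue point property for $1/\phi$ with respect to $\lambda$ holds at $\lambda$-a.e.\ $x$. Running the same estimate with the roles of $\mu$ and $\lambda$ swapped yields the inclusion. I do not foresee any substantial obstacle; the only bookkeeping point is to intersect the two full-measure sets coming from the two differentiation statements, and to be mindful of the null set $\{\phi=0\}$, which has $\lambda$-measure zero and so can be harmlessly discarded.
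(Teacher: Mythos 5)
Your approach is essentially the standard one (this lemma is only cited in the paper, to Mattila's book, and the proof there proceeds along the same lines): use the Besicovitch differentiation theorem to get a full-measure set of points $x$ where $\phi(x)>0$ and $\phi$ has a Lebesgue point with respect to $\mu$, and then compare $T_{x,r_i}[\lambda]$ with $\phi(x)\,T_{x,r_i}[\mu]$ by a direct change of variables. The forward inclusion $\Tan(\mu,x)\subset\Tan(\lambda,x)$ is clean and correct as you have written it (and one only needs the cone property of $\Tan(\lambda,x)$, not the full $d$-cone structure, to divide out $\phi(x)$).

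The one place you are too quick is the reverse inclusion. You cannot literally ``swap the roles of $\mu$ and $\lambda$,'' because $d\mu=(1/\phi)\,d\lambda$ only on $\{\phi>0\}$, while $\mu$ may charge $\{\phi=0\}$. Writing out the same change of variables with $1/\phi$ produces an additional error term
\[
c_i\int_{\{\phi=0\}} (f\circ T_{x,r_i})\,d\mu ,
\]
and you must show it vanishes in the limit. This is not automatic from the Lebesgue point property of $1/\phi$ with respect to $\lambda$; rather it follows from the Lebesgue point property of $\phi$ with respect to $\mu$ at a point with $\phi(x)>0$: since $|\phi-\phi(x)|=\phi(x)$ on $\{\phi=0\}$, one gets
\[
\frac{\mu\bigl(\{\phi=0\}\cap B(x,Rr_i)\bigr)}{\mu\bigl(B(x,Rr_i)\bigr)}\le \frac{1}{\phi(x)\,\mu\bigl(B(x,Rr_i)\bigr)}\int_{B(x,Rr_i)}|\phi-\phi(x)|\,d\mu\longrightarrow 0 .
\]
Together with the fact that $\lambda(B(x,r))/\mu(B(x,r))\to\phi(x)>0$ (also a consequence of the Lebesgue point property), this shows $c_i\,\mu(B(x,Rr_i))$ is bounded and the extra term tends to $0$. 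Equivalently, one can argue that for $\lambda$-a.e.\ $x$ one has $\Tan(\mu|_{\{\phi>0\}},x)=\Tan(\mu,x)$ by the density theorem, and then your symmetric argument does apply to $\mu|_{\{\phi>0\}}=(1/\phi)\,\lambda$. With this fix in place, the proof is complete and matches the intended argument.
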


\begin{lemma} \cite[Theorem 14.3]{Mattila} Let $\mu$ be a Radon measure on $\bR^{n+1}$. If $\xi\in \bR^{n+1}$ and \eqn{compactdouble} holds, then every sequence $r_{i}\downarrow 0$ contains a subsequence such that $T_{\xi,r_{j}\#}\mu/\mu(B(\xi,r_{j}))$ converges to a measure $\nu\in \Tan(\mu,\xi)$. 
\label{l:tanexist}
\end{lemma}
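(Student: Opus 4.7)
The plan is to realize $\nu$ as a weak-$*$ limit of the normalized rescalings $\nu_{i}:=T_{\xi,r_{i}\#}\mu/\mu(B(\xi,r_{i}))$ obtained via a standard diagonal compactness argument, with the doubling hypothesis \eqref{e:compactdouble} providing the uniform local mass control needed to run it.

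First I would establish uniform local mass bounds for the $\nu_{i}$'s. By point (1) of the preceding lemma,
\[
\nu_{i}(B(0,R)) \;=\; \frac{\mu(B(\xi,Rr_{i}))}{\mu(B(\xi,r_{i}))}.
\]
Hypothesis \eqref{e:compactdouble} furnishes constants $C<\infty$ and $r_{0}>0$ such that $\mu(B(\xi,2r))\le C\mu(B(\xi,r))$ for all $r\le r_{0}$. Fixing $R>0$ and choosing $k$ with $2^{k}\ge R$, iterating the doubling estimate $k$ times gives $\nu_{i}(B(0,R))\le C^{k}$ for every $i$ sufficiently large that $2^{k-1}r_{i}\le r_{0}$. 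In particular, $\{\nu_{i}\}$ is eventually uniformly bounded in total variation on every compact subset of $\bR^{n+1}$.

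Second I would extract a weak-$*$ convergent subsequence via a diagonal procedure. Exhausting $\bR^{n+1}$ by the closed balls $\overline{B(0,m)}$ with $m\in\bN$, the sequential Banach--Alaoglu theorem applied to the restrictions $\nu_{i}|_{\overline{B(0,m)}}\in C(\overline{B(0,m)})^{*}$ yields, for each $m$, a weak-$*$ convergent subsequence; a standard diagonal extraction then produces a single subsequence $\{\nu_{i_{j}}\}$ for which $\int f\,d\nu_{i_{j}}$ converges for every $f\in C_{c}(\bR^{n+1})$. The limit defines a positive linear functional on $C_{c}(\bR^{n+1})$, which by the Riesz representation theorem is represented by integration against a Radon measure $\nu$, and $\nu_{i_{j}}\to\nu$ weakly.

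Third I would verify nontriviality and membership in $\Tan(\mu,\xi)$. Since $\nu_{i}(B(0,1))=\mu(B(\xi,r_{i}))/\mu(B(\xi,r_{i}))=1$ for every $i$, the upper semicontinuity of mass on closed sets under weak convergence gives $\nu(\overline{B(0,1)})\ge\limsup_{j}\nu_{i_{j}}(\overline{B(0,1)})=1$, so $\nu\ne 0$. By point (5) of the preceding lemma, weak convergence $\nu_{i_{j}}\to\nu$ is equivalent to $F_{r}(\nu_{i_{j}},\nu)\to 0$ for every $r>0$, which is precisely the statement that $\nu\in\Tan(\mu,\xi)$. The one mildly delicate point is that the doubling bound in \eqref{e:compactdouble} is only asymptotic (a $\limsup$ as $r\to 0$), so the comparison $\mu(B(\xi,Rr_{i}))\lesssim\mu(B(\xi,r_{i}))$ holds only for $i$ sufficiently large depending on $R$; this is precisely why the diagonal extraction over an exhaustion is required rather than a single global bound.
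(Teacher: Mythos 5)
The paper does not prove this lemma; it simply cites \cite[Theorem 14.3]{Mattila}, so there is no in-paper argument to compare against. Your proof is correct and is essentially the standard argument Mattila gives: obtain eventual uniform local mass bounds from the asymptotic doubling hypothesis, extract a weakly convergent subsequence by Banach--Alaoglu plus a diagonal argument, and check nontriviality via upper semicontinuity on compacts. Two very small remarks: the equality $\limsup_{j}\nu_{i_{j}}(\overline{B(0,1)})=1$ should really be the inequality $\limsup_{j}\nu_{i_{j}}(\overline{B(0,1)})\geq\nu_{i_{j}}(B(0,1))=1$ (the $\limsup$ over the closed ball need not equal $1$, but $\geq 1$ is all you need); and invoking item (5) of the preceding lemma is an unnecessary detour --- membership in $\Tan(\mu,\xi)$ follows directly from its definition as the set of nonzero weak limits of $c_{j}T_{\xi,r_{j}}[\mu]$ with $r_{j}\downarrow 0$, which you have exhibited with $c_{j}=\mu(B(\xi,r_{j}))^{-1}$. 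Neither affects the correctness of the argument.
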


%\begin{theorem}\label{t:main}
%Let $\omega$ be a Radon measure in $\bR^{n+1}$ and suppose $\Tan(\omega,\xi)\subset \cP_{k}$ for some integer $k$. Then $\Tan(\omega,\xi)\subset \cF_{k}$.
%\end{theorem}

%
%\begin{proposition}[\cite{Pr87} Proposition 2.2] Let $\cM$ be a $d$-cone. Then $\cM$ has compact basis if and only if for every $\lambda>1$ there is $\tau>1$ such that 
%\begin{equation}
%F_{\tau r}(\Psi)\leq \lambda F_{r}(\Psi) \mbox{ for every }\Psi\in \cM \mbox{ and }r>0.
%\label{e:compact}
%\end{equation}
% In this case, $0\in \supp \Psi$ for all $\Psi\in \cM$.
%\label{p:compact}
%\end{proposition}

\begin{theorem} \label{t:ttt}
\cite[Theorem 14.16]{Mattila}
Let $\mu$ be a Radon measure on $\bR^{n+1}$. For $\mu$-almost every $x\in \bR^{n+1}$, if $\nu\in \Tan(\mu,x)$, the following hold:
\begin{enumerate}
\item $T_{y,r}[\nu]\in \Tan(\mu,x)$ for all $y\in \supp \nu$ and $r>0$.
\item $\Tan(\nu,y)\subset \Tan(\mu,x)$ for all $y\in \supp \nu$.
\end{enumerate}
\end{theorem}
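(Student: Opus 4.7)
The plan is to deduce both (1) and (2) from the fact that $\Tan(\mu,x)$ is a closed $d$-cone for $\mu$-a.e.\ $x$, once we establish the key stability property (1). Both parts rely on a diagonal extraction argument originating with Preiss.

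\emph{Part (1).} Fix $\nu\in\Tan(\mu,x)$ and sequences $r_{i}\downarrow 0$, $c_{i}>0$ with $c_{i}T_{x,r_{i}}[\mu]\to\nu$ weakly. Fix $y\in\supp\nu$ and $r>0$. Since $y\in\supp\nu$, any ball $B(y,\eta)$ has $\nu(B(y,\eta))>0$, and by weak convergence $c_{i}T_{x,r_{i}}[\mu](B(y,\eta))>0$ for all large $i$; choose $y_{i}\to y$ with $x_{i}:=x+r_{i}y_{i}\in\supp\mu$. A direct computation from the definitions gives
\[
T_{x_{i},rr_{i}}[\mu] \;=\; T_{y_{i},r}\!\bigl[T_{x,r_{i}}[\mu]\bigr],
\]
and continuity of the translation-dilation operator in the weak topology (combined with $y_{i}\to y$) yields
$c_{i}T_{x_{i},rr_{i}}[\mu]\to T_{y,r}[\nu]$ weakly. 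The obstacle is that these rescalings are centered at $x_{i}\ne x$, so the limit a priori lies in $\Tan(\mu,x_i)$ rather than $\Tan(\mu,x)$. To replace $x_{i}$ by $x$ one estimates, for any $1$-Lipschitz $f$ supported in $\bar B(0,\rho)$, the difference
\[
\int f\,d\bigl(T_{x,rr_{i}}[\mu]-T_{x_{i},rr_{i}}[\mu]\bigr) \;\lesssim\; \frac{|y_{i}|}{r}\,\mu\bigl(B(x,(\rho+|y_{i}|/r)\,rr_{i})\bigr),
\]
so the error is controlled by the mass ratios $\mu(B(x,Rrr_{i}))/\mu(B(x,rr_{i}))$ for a fixed large $R$. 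A standard Besicovitch/differentiation argument shows that for $\mu$-a.e.\ $x$ these ratios stay bounded along a subsequence (this is where the $\mu$-null exceptional set enters). Passing to such a subsequence and diagonalizing over a countable dense set of parameters $(y,r)$ in $\supp\nu\times(0,\infty)$ produces sequences $\tilde c_{i}>0$, $\tilde r_{i}\downarrow 0$ with $\tilde c_{i}T_{x,\tilde r_{i}}[\mu]\to T_{y,r}[\nu]$; density of the parameter set and continuity of $(y,r)\mapsto T_{y,r}[\nu]$ in the weak topology then gives the conclusion for every $y\in\supp\nu$ and every $r>0$.

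\emph{Part (2).} Take any $\mu'\in\Tan(\nu,y)$, so $d_{j}T_{y,s_{j}}[\nu]\to\mu'$ weakly for some $s_{j}\downarrow 0$, $d_{j}>0$. By Part (1), $T_{y,s_{j}}[\nu]\in\Tan(\mu,x)$; since $\Tan(\mu,x)$ is a cone, also $d_{j}T_{y,s_{j}}[\nu]\in\Tan(\mu,x)$. Tangent measures form a closed set in the weak topology (this follows from an obvious diagonal argument using the metrization via the functionals $F_{R}$), and therefore $\mu'\in\Tan(\mu,x)$.

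\emph{Main obstacle.} The real work is entirely inside Part (1): we need to absorb the shift $x_{i}-x=r_{i}y_{i}$ (which is small compared to $r_{i}$ but not small compared to the smaller scale $rr_{i}$ we are now blowing up at) into a genuine blow-up centered at $x$. This is the place where one must throw away a $\mu$-null set of bad points $x$---those for which the doubling-type ratios $\mu(B(x,Rrr_{i}))/\mu(B(x,rr_{i}))$ cannot be controlled along any subsequence---and the separability argument over $(y,r)$ is needed because the exceptional null set must be independent of $\nu$, $y$ and $r$.
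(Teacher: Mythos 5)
The paper cites this as \cite[Theorem 14.16]{Mattila} and gives no proof, so there is no internal argument to compare against; but your proposed proof of Part (1) has a genuine gap that the proof in Mattila (going back to Preiss) avoids by a completely different mechanism. Your reduction of Part (2) to Part (1) together with closedness of $\Tan(\mu,x)\cup\{0\}$ is fine. The problem is the claim that the centers $x_i=x+r_iy_i$ can be replaced by $x$ by controlling doubling ratios. The error you estimate, $c_i\bigl|\int f\,d(T_{x,rr_i}[\mu]-T_{x_i,rr_i}[\mu])\bigr|\lesssim c_i\frac{|y_i|}{r}\mu\bigl(B(x,(\rho+|y_i|/r)rr_i)\bigr)$, is \emph{bounded} but does not tend to zero: since $c_iT_{x,r_i}[\mu]\to\nu$, we have $c_iT_{x,rr_i}[\mu]=T_{0,r}\bigl[c_iT_{x,r_i}[\mu]\bigr]\to T_{0,r}[\nu]$, while $c_iT_{x_i,rr_i}[\mu]\to T_{y,r}[\nu]$; the error therefore converges to $F_\rho(T_{0,r}[\nu],T_{y,r}[\nu])$, which is generically strictly positive. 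No doubling or mass-ratio hypothesis can make this vanish, because the two sequences genuinely have different limits (they differ by the translation of $\nu$ by $y/r$). A concrete illustration: for $\mu=\cH^1|_{[0,\infty)}$ in $\bR$ and the point $x=0$, $\mu$ is perfectly doubling at $0$, yet $\Tan(\mu,0)=\{c\cH^1|_{[0,\infty)}\}$ is not invariant under $T_{y,r}$ for $y>0$; your exceptional-set criterion (unbounded doubling ratios) would fail to exclude $x=0$, whereas the theorem must.

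The actual proof proceeds not by moving the blow-up center but by a density-point argument in the space of Radon measures. One shows that the set of $x$ for which some $\nu\in\Tan(\mu,x)$, $y\in\supp\nu$, $r>0$ have $T_{y,r}[\nu]\notin\Tan(\mu,x)$ can be covered by countably many sets $A$, each indexed by rational parameters that quantify a positive lower bound on the distance from $T_{y,r}[\nu]$ to $\Tan(\mu,x)$ and a scale threshold. For such an $x$, the auxiliary points $x_i=x+r_iy_i\in\supp\mu$ satisfy $x_i\to x$, and the rescaled measures $c_iT_{x_i,rr_i}[\mu]$ eventually lie far from every normalized blow-up of $\mu$ at $x$ at comparable scales. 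This is incompatible with $x$ being a $\mu$-density point of $A$ (one uses that tangent cones vary upper-semicontinuously, plus a Besicovitch/Vitali-type covering), so each $A$ is $\mu$-null. The auxiliary points $x_i$ thus serve to derive a contradiction at density points, not to directly manufacture a sequence $\tilde c_iT_{x,\tilde r_i}[\mu]\to T_{y,r}[\nu]$; that missing idea is precisely what your proposal lacks.
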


\vvv

\section{The blowup lemmas}

 For a measure $\mu$, $\xi\in \supp \mu$, $L$ an $n$-plane, and $r>0$, we define
\[
\beta_{\mu,1}^{L}(\xi,r)=\frac{1}{r^{n}}\int_{B(\xi,r)} \frac{\dist(x,L)}{r}d\mu(x)\]
and 
\[
\beta_{\mu,1}(\xi,r)=\inf_{L}\beta_{\mu,1}^{L}(\xi,r)\]
where the infimum is over all $n$-dimensional planes $L$. 

The aim of this section is to prove the following lemma. The proof is a variation on the work in \cite{TV}, which in turn is inspired by previous blowup arguments in \cite{AMT} and \cite{KPT}.

\begin{lemma}\label{l:mainlem}
Let $\Omega_{1},\Omega_{2}\subset \bR^{n+1}$ be disjoint domains and suppose there is $E\subset \d\Omega_{1}\cap \d\Omega_{2}$ upon which we have $\omega_{1}|_{E}\ll \omega_{2}|_{E}\ll \omega_{1}|_{E}$. Fix $\ve<1/100$ and let $E_{m}$ be the set of $\xi\in E$ such that for all $0<r<1/m$ and $i=1,2$ we have 
\begin{equation}\label{e:widub}
\omega_{i}(B(\xi,2r))\leq m\, \omega_{i}(B(\xi,r)),
\end{equation} 
\begin{equation}\label{e:ominondeg}
\cH^{n+1}(B(\xi,r)\cap \Omega_{i})\geq  \frac1m\,r^{n+1},
\end{equation}
and
\begin{equation}\label{e:betasmall}
\beta_{\omega_{1},1}(\xi,r)<\ve \frac{\omega_{1}(\xi,r)}{r^{n}}
\end{equation}
Then 
\begin{equation}\label{eq:EMM}
\omega_1\biggl(E\setminus \bigcup_{m\geq1} E_m\biggr)=0.
\end{equation}
\end{lemma}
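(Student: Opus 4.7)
The plan is to verify each of the three defining conditions of $E_m$ at $\omega_1$-almost every $\xi\in E$; once that is done, intersecting the three $\omega_1$-full subsets and choosing $m=m(\xi)$ large enough to absorb every $\xi$-dependent constant and scale threshold produces $\xi\in E_{m(\xi)}$, from which \eqref{eq:EMM} follows.

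First I would dispose of \eqref{e:widub} together with \eqref{e:ominondeg}. Any Radon measure on $\bR^{n+1}$ is asymptotically doubling almost everywhere --- a standard consequence of Besicovitch differentiation, compatible with \Theorem{compactdouble} --- so applying this to $\omega_1,\omega_2$ and using $\omega_1|_E\ll\omega_2|_E\ll\omega_1|_E$ gives \eqref{e:widub} at $\omega_1$-a.e.\ $\xi\in E$ below some $\xi$-dependent threshold. For \eqref{e:ominondeg} I would reuse the proof of \Lemma{dublemma}: disjointness of $\Omega_1,\Omega_2$ forces, at each small scale $r$, at least one index $i$ with $\cH^{n+1}(2\delta_0 B(\xi,r)\cap\Omega_i)\le\tfrac{1}{2}\cH^{n+1}(2\delta_0 B(\xi,r))$, so that two applications of \Lemma{largeinterior} (first to that $\Omega_i$, and then to the other using $\Omega_j\subset\Omega_i^c$) yield $\cH^{n+1}(\Omega_i\cap 2\delta_0 B(\xi,r))\gtrsim r^{n+1}$ for both $i=1,2$, which gives \eqref{e:ominondeg} after absorbing the $2\delta_0$-factor into $m$.

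Condition \eqref{e:betasmall} is the substantial step and I expect it to be the main obstacle: it is essentially a flatness assertion for $\omega_1$ at small scales, and it is where the new blowup technique of the paper must enter. I would argue by contradiction. Suppose the condition fails on a Borel set $F\subset E$ with $\omega_1(F)>0$ along a sequence of scales $r_k\downarrow 0$ depending on $\xi$. For $\omega_1$-a.e.\ $\xi\in F$ the doubling and volume properties from the previous paragraph are already available, so \Lemma{tanexist} produces a tangent measure $\nu\in\Tan(\omega_1,\xi)$ along (a subsequence of) $r_k$; by the scaling of the $\beta$-number and weak convergence, $\beta_{\nu,1}(0,1)\gtrsim\ve\,\nu(B(0,1))>0$, so $\supp\nu$ cannot lie in any single $n$-plane. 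Simultaneously, the $L^2_{\loc}$-blowup of \cite{TV} extracts nonnegative subharmonic limits $v_1,v_2$ of the rescaled Green functions $u_1,u_2$, with disjoint supports. The estimates \eqref{e:beurling}, \eqref{e:beurling0}, and \eqref{e:otherside}, all available under the doubling already secured, sandwich the ACF quantity $\gamma(\xi,r)$ up to uniform constants by the product $(\omega_1(B(\xi,r))/r^n)(\omega_2(B(\xi,r))/r^n)$ at comparable scales, and therefore force the rescaled $\gamma$ to be constant in the blowup limit. The rigidity case of \Theorem{ACF}, i.e., \cite[Theorem~12.3]{CS}, then identifies $v_1,v_2$ as constant multiples of the positive and negative parts of a single linear function; in particular $\supp\nu$ is contained in a single hyperplane, contradicting the $\beta$-lower bound on $\nu$.

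The hardest step is this last one: passing the rigidity case of ACF through the blowup limit when only $L^2_{\loc}$-convergence of the $u_i$ (rather than uniform convergence, as in \cite{AMT} under the CDC) is available. This is precisely the technical crux highlighted in the introduction and inherited from \cite{TV}, and is what allows the CDC hypothesis to be removed here.
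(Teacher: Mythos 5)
Your proposal contains a critical gap at the very first step. You assert that ``any Radon measure on $\bR^{n+1}$ is asymptotically doubling almost everywhere'' as ``a standard consequence of Besicovitch differentiation, compatible with \Theorem{compactdouble}.'' This is false. For an arbitrary Radon measure $\mu$, the statement that $\limsup_{r\to0}\mu(B(\xi,2r))/\mu(B(\xi,r))<\infty$ for $\mu$-a.e.\ $\xi$ does not hold in general; the only free-of-charge statement is a $\liminf$ bound along a sequence of scales (which is what guarantees existence of tangent measures a.e.). Theorem \ref{t:compactdouble} is an equivalence between pointwise doubling and compactness of the basis of $\Tan(\mu,\xi)$; it does not grant doubling unconditionally. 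In fact, establishing \eqref{e:widub} at $\omega_1$-a.e.\ $\xi\in E$ is precisely the substantive content of this lemma, and in the paper it comes out at the \emph{end} of the argument: one first shows via the $L^2$-blowup (Lemma \ref{l:capf}) that some tangent measure at $\xi$ is a flat measure in $\cF$, then via the connectivity Lemma \ref{l:tanconnect} that \emph{every} tangent measure at $\xi$ is in $\cF$, and only then does compactness of the basis of $\cF$ together with Theorem \ref{t:compactdouble} deliver pointwise doubling. Your proposal inverts this logical order, and once the doubling claim is removed, the subsequent steps collapse: your use of Lemma \ref{l:largeinterior}/\ref{l:dublemma} for \eqref{e:ominondeg} explicitly requires the hypothesis $\omega_i(4B)\le C\omega_i(\delta_0 B)$, which you no longer have, and your passage to tangent measures for \eqref{e:betasmall} via Lemma \ref{l:tanexist} likewise invokes the doubling you have not established.

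Separately, your mechanism for \eqref{e:betasmall} differs from the paper's. You argue by contradiction that a tangent measure with $\beta_{\nu,1}(0,1)\gtrsim\ve$ cannot coexist with the ACF rigidity forcing the blowup to be a pair of linear functions. The paper does not invoke the rigidity (constancy) case of the ACF formula here at all; instead, in Lemma \ref{l:capf} it uses Lemma \ref{l:samew} to show directly that the difference $u^\infty=u_1^\infty-h(\xi)^{-1}u_2^\infty$ of the $L^2$-limits is harmonic in $\frac12\bB$, that $\supp\omega_1^\infty\cap\frac12\bB=\{u^\infty=0\}\cap\frac12\bB$ is therefore a nontrivial real analytic variety carrying $\omega_1^\infty\ll\cH^n$, and then iterates the tangent-measure operation (via Lemma \ref{l:abstan} and Theorem \ref{t:ttt}) to land in $\cF$; the $\beta$-decay \eqref{e:limbeta} is then read off directly from $d_1(T_{\xi,r}[\omega_1],\cF)\to0$. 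Your ACF-rigidity route is not implausible in spirit, but as written it presupposes control on $\gamma(\xi,r)$ that, in this CDC-free setting, is itself derived from the doubling you have not yet proved. You have correctly identified that the $L^2$ (rather than uniform) convergence of rescaled Green functions is the new technical point, but the way it enters is not as a replacement step in a contradiction argument for $\beta$ alone --- it is the engine that produces flat tangent measures and hence doubling, $\beta$-decay, and the volume bounds all at once.
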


Set 
\[E^{*}=\ck{\xi\in E: \lim_{r\rightarrow 0} \frac{\omega_{1}(E\cap B(\xi,r))}{\omega_{1}(B(\xi,r))}= \lim_{r\rightarrow 0} \frac{\omega_{2}(E\cap B(\xi,r))}{\omega_{2}(B(\xi,r))}=1}.\]
By \cite[Corollary 2.14 (1)]{Mattila} and because $\omega_{1}$ and $\omega_{2}$ are mutually absolutely continuous on $E$,
\[\omega_1(E\backslash E^{*})= \omega_{2}(E\backslash E^{*})=0.\]
Also, set 
\begin{multline*}
\Lambda_{1} =\left\{\xi\in E^*\!\!: 0<h(\xi):=\frac{d\omega_{2}}{d\omega_{1}}(\xi)=\lim_{r\rightarrow 0} \frac{\omega_{2}( B(\xi,r))}{\omega_{1}( B(\xi,r))} \right. \\
 =\left. \lim_{r\rightarrow 0} \frac{\omega_{2}(E\cap B(\xi,r))}{\omega_{1}(E\cap B(\xi,r))}<\infty\right\}\end{multline*}
%\[
%\Lambda_{2}=\ck{\xi\in E: \lim_{r\rightarrow 0} \frac{\omega_{2}(E\cap B(\xi,r))}{\omega_{1}(E\cap B(\xi,r))}=\infty}\]
%\[
%\Lambda_{3}=\ck{\xi\in E: \lim_{r\rightarrow 0} \frac{\omega_{2}(E\cap B(\xi,r))}{\omega_{1}(E\cap B(\xi,r))}=0}\]
%\[ \Lambda_{4}=\ck{\xi\in E:\lim_{r\rightarrow 0} \frac{\omega_{2}(E\cap B(\xi,r))}{\omega_{1}(E\cap B(\xi,r))} \mbox{ does not exist}}\]
and 
\[
\Gamma = \ck{\xi\in \Lambda_{1}: \xi \mbox{ is a Lebesgue point for $h$ with respect to }\omega_{1}}.
\]
Again, by Lebesgue differentiation for measures (see \cite[Corollary 2.14 (2) and Remark 2.15 (3)]{Mattila}), $\Gamma$ has full measure in $E^{*}$ and hence in $E$.
\vv

To prove \eqref{eq:EMM}, it suffices to show that for $\omega_{1}$-almost every $\xi\in \Gamma$, we have 
\begin{equation}\label{e:limdub}
\limsup_{r\rightarrow 0} \frac{\omega_{1}(B(\xi,2r))}{\omega_{1}(B(\xi,r))}<\infty,
\end{equation}
\begin{equation}\label{e:limcomp}
\liminf_{r\rightarrow 0} \min_{i=1,2} \frac{\cH^{n+1}(B(\xi,r)\cap \Omega_{i})}{r^{n+1}}>0,
\end{equation}
and
\begin{equation}\label{e:limbeta}
\lim_{r\rightarrow 0} \beta_{\omega_{1},1}(\xi,r) \frac{r^{n}}{\omega_{1}(B(\xi,r))}=0.
\end{equation}
We then use some standard measure theory to find our desired sets $E_m$.

%
%\begin{equation}
%\label{e:zerolambdas}
%\omega_{1}(\Lambda_{2})=\omega_{2}(\Lambda_{3})=\omega_{1}(\Lambda_{4})=\omega_{2}(\Lambda_{4})=0
%\end{equation}

The following is proven in \cite[Lemma 5.8]{AMT}. There we assume a capacity density condition, but the assumption is not used in the proof. 

\begin{lemma}\label{l:samew}
Let $\xi\in \Gamma$, $c_{j}\geq 0$,  and $r_{j}\rightarrow 0$ be such that $\omega^{j}_{1}=c_{j}T_{\xi,r_{j}}[\omega_{1}]\rightarrow \omega^{\infty}_{1}$. Then $\omega^{j}_{2}=c_{j}T_{\xi,r_{j}}[\omega_{2}]\rightarrow h(\xi)\omega^{\infty}_{1}$.
\end{lemma}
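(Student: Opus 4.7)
The plan is to deduce the weak convergence of $\omega_{2}^{j}$ directly from that of $\omega_{1}^{j}$, by exploiting both defining features of $\xi\in\Gamma$: that $\xi$ is a point of density one for $E$ with respect to each $\omega_{i}$ (since $\Gamma\subset E^{*}$), and that $\xi$ is a Lebesgue point of $h=d\omega_{2}/d\omega_{1}$ with respect to $\omega_{1}$. Heuristically, on any small ball around $\xi$ the measure $\omega_{2}$ differs from $h(\xi)\omega_{1}$ by $o(\omega_{1}(B))$ in total variation, and this is all that is needed to transfer weak limits under rescaling.

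First I would fix a test function $\vphi\in C_{c}(\R^{n+1})$ supported in $\overline{B(0,R)}$ and write
\[
\int \vphi\,d\omega_{2}^{j}-h(\xi)\int \vphi\,d\omega_{1}^{j} \;=\; c_{j}\int \vphi\!\left(\tfrac{x-\xi}{r_{j}}\right)d\bigl(\omega_{2}-h(\xi)\omega_{1}\bigr)(x),
\]
whose absolute value is at most $c_{j}\|\vphi\|_{\infty}\,|\omega_{2}-h(\xi)\omega_{1}|(B(\xi,Rr_{j}))$. The central estimate to prove is then
\[
|\omega_{2}-h(\xi)\omega_{1}|\bigl(B(\xi,Rr_{j})\bigr)\;=\;o\bigl(\omega_{1}(B(\xi,Rr_{j}))\bigr)\quad\text{as }j\to\infty.
\]

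To establish this, split the ball into its intersection with $E$ and with $E^{c}$. On $E$ one has $d\omega_{2}=h\,d\omega_{1}$, so the contribution is at most $\int_{B(\xi,Rr_{j})}|h-h(\xi)|\,d\omega_{1}$, which is $o(\omega_{1}(B(\xi,Rr_{j})))$ by the Lebesgue point property defining $\Gamma$. On $E^{c}$ I bound crudely by $\omega_{2}(B(\xi,Rr_{j})\setminus E)+h(\xi)\,\omega_{1}(B(\xi,Rr_{j})\setminus E)$; the definition of $E^{*}$ makes both pieces $o$ of the corresponding total masses on the ball, and since $\omega_{2}(B(\xi,r))/\omega_{1}(B(\xi,r))\to h(\xi)<\infty$, both terms are in turn $o(\omega_{1}(B(\xi,Rr_{j})))$.

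To conclude, note that $c_{j}\omega_{1}(B(\xi,Rr_{j}))=\omega_{1}^{j}(B(0,R))$, and by the assumed weak convergence $\omega_{1}^{j}\to\omega_{1}^{\infty}$ together with the portmanteau property for Radon measures, $\limsup_{j}\omega_{1}^{j}(B(0,R))\leq \omega_{1}^{\infty}(\overline{B(0,R)})<\infty$. Hence $c_{j}\,|\omega_{2}-h(\xi)\omega_{1}|(B(\xi,Rr_{j}))\to 0$, so $\int \vphi\,d\omega_{2}^{j}\to h(\xi)\int \vphi\,d\omega_{1}^{\infty}$ for every such $\vphi$, which is exactly the desired weak convergence $\omega_{2}^{j}\to h(\xi)\omega_{1}^{\infty}$. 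The main obstacle is the off-$E$ contribution to the total variation, where $h$ is not meaningfully defined; but the density-one property packaged in $E^{*}$, combined with the finiteness $h(\xi)<\infty$ inherited from $\xi\in\Lambda_{1}$, absorbs it cleanly.
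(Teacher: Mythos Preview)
Your argument is correct, and it is essentially the standard proof of this fact: the paper itself does not reprove the lemma but simply cites \cite[Lemma 5.8]{AMT} (noting the CDC hypothesis there is irrelevant), and your total-variation estimate via the $E$/$E^{c}$ split together with the Lebesgue-point and density-one properties is exactly the expected line of reasoning.
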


\vv
Let 
\[
\cF=\{c\cH^{n}|_{V}: c>0, \;\; V\mbox{ a $d$-dimensional plane containing the origin}\}.\]
It is not hard to show that $\cF$ has compact basis.
\vv

%\begin{lemma}\label{l:capf}
%For this proof we will let $\bB=B(0,1)$. Let $\xi\in \d\Omega_{1}\cap \d\Omega_{2}$ be such that there is $h(\xi)\in (0,\infty)$ so that if $r_{j}\downarrow 0$ and $c_{j}>0$ are such that $c_{j} T_{\xi,r_{j}}[\omega_{1}]\rightarrow \omega_{1}^{\infty}\in \Tan(\omega_{1},\xi)$, then $c_{j} T_{\xi,r_{j}}[\omega_{2}]\rightarrow h(\xi)\omega_{1}^{\infty}$. (This happens for  $\omega_{i}$-a.e. $\xi\in \Gamma$, for example.) Then $\Tan(\omega_{1},\xi)\cap \cF\neq\emptyset$. 
%\end{lemma}

\begin{lemma}\label{l:capf}
For $\omega_1$-a.e.\ $\xi\in\Gamma$, $$\Tan(\omega_{1},\xi)\cap \cF\neq\varnothing.$$ 
\end{lemma}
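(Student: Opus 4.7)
The plan is to produce, at $\omega_1$-a.e.\ $\xi\in\Gamma$, a blow-up of the pair of Green functions $(u_1,u_2)$ along a sequence $r_j\to 0$ whose distributional Laplacians converge to a tangent measure $\omega_1^\infty\in\Tan(\omega_1,\xi)$, and then to invoke the rigidity case of the Alt--Caffarelli--Friedman monotonicity formula to conclude that the blow-ups are linear on complementary half-spaces. From that structure one reads off directly that $\omega_1^\infty$ is proportional to $\cH^n$ on the separating hyperplane, whence $\omega_1^\infty\in \cF$.

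First I would restrict to $\omega_1$-a.e.\ $\xi\in\Gamma$ for which $\liminf_{r\to 0}\omega_i(B(\xi,2r))/\omega_i(B(\xi,r))<\infty$ for both $i=1,2$, and select a sequence $r_j\to 0$ along which this liminf is realized simultaneously for both indices (so that Lemmas \ref{l:dublemma} and \ref{l:otherside} apply at scale $r_j$). Define
\[
w_j^i(y):=\frac{r_j^{n-1}}{\omega_i(B(\xi,r_j))}\,u_i(\xi+r_j y),\qquad i=1,2,
\]
which, extended by zero, are nonnegative and subharmonic in $\R^{n+1}$, vanish at $0$, and have disjoint supports. A direct scaling computation gives $\Delta w_j^i = T_{\xi,r_j}[\omega_i]/\omega_i(B(\xi,r_j))$ as distributions in any fixed $B(0,R)$ (the pole $(p_i-\xi)/r_j$ escapes to infinity for large $j$). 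Lemma \ref{l:otherside} together with Caccioppoli provides uniform $W^{1,2}_{\loc}$ bounds, so along a further subsequence $w_j^i\to w_\infty^i$ weakly in $W^{1,2}_{\loc}$ and strongly in $L^2_{\loc}$, and $T_{\xi,r_j}[\omega_1]/\omega_1(B(\xi,r_j))\to\omega_1^\infty\in\Tan(\omega_1,\xi)$ by Lemma \ref{l:tanexist}. Combining Lemma \ref{l:samew} with $\omega_2(B(\xi,r_j))/\omega_1(B(\xi,r_j))\to h(\xi)>0$ (valid since $\xi\in\Lambda_1$) yields $\Delta w_\infty^1=\Delta w_\infty^2=\omega_1^\infty$ as distributions.

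Next comes the ACF step. A change of variables gives the scaling identity
\[
\gamma_{w_j}(0,\rho) \;=\; \frac{\gamma_u(\xi,r_j\rho)}{[\Theta_1(r_j)\Theta_2(r_j)]^2},\qquad \Theta_i(r):=\omega_i(B(\xi,r))/r^n.
\]
By Theorem \ref{t:ACF}, $\gamma_u(\xi,\cdot)$ is nondecreasing and finite, so $\gamma_u(\xi,r)\to\gamma_0\in[0,\infty)$ as $r\to 0$. Lemmas \ref{l:beurling} and \ref{l:otherside} with the doubling selection give $\Theta_1(r_j)\Theta_2(r_j)\approx \gamma_u(\xi,cr_j)^{1/2}$, so a further diagonal extraction produces $\Theta_1(r_j)\Theta_2(r_j)\to L\in(0,\infty)$, and hence $\gamma_{w_j}(0,\rho)\to \gamma_0/L^2$ pointwise in $\rho$. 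Upgrading this (via strong $L^2_{\loc}$ convergence of the gradients, extracted from the two-sided ACF estimates together with weak lower semicontinuity) identifies $\gamma_{w_\infty}(0,\rho)\equiv \gamma_0/L^2$, independent of $\rho$. Since $w_\infty^1,w_\infty^2\geq 0$ are subharmonic, vanish at $0$, and have disjoint supports, the rigidity case of the ACF formula forces
\[
w_\infty^1(y)=\alpha_1(y\cdot\nu)_+\qquad\text{and}\qquad w_\infty^2(y)=\alpha_2(y\cdot\nu)_-
\]
for some unit vector $\nu\in S^n$ and constants $\alpha_i\geq 0$. Taking distributional Laplacians, $\omega_1^\infty=\Delta w_\infty^1=\alpha_1\cH^n|_V\in\cF$ with $V=\{y\cdot\nu=0\}$.

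The main obstacle is implementing the rigidity step under the weaker convergence available here. As the introduction emphasizes, without the CDC the rescaled Green functions converge only strongly in $L^2_{\loc}$ and weakly in $W^{1,2}_{\loc}$, in contrast to the uniform convergence in \cite{AMT}. The rigidity case of ACF must therefore be implemented in a Sobolev framework as in the recent \cite{TV}, and strong $L^2_{\loc}$ convergence of $\nabla w_j^i$ must be extracted (from the two-sided ACF bounds) before the case-of-equality argument can deliver the half-space linear structure. A subsidiary difficulty is the joint selection of $r_j$ so that both $\omega_i$ are doubling at $r_j$, the densities $\Theta_i(r_j)$ converge to positive finite limits, and both $w_j^i$ and the rescaled $\omega_i$ converge along the same subsequence; this is arranged by a diagonal procedure based on Lemmas \ref{l:beurling} and \ref{l:otherside}.
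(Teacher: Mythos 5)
Your approach is genuinely different from the paper's, and it has a real gap that I do not see how to close.

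The paper's proof does \emph{not} invoke the rigidity case of the Alt--Caffarelli--Friedman formula at all. Instead, after extracting $L^2_{\loc}$ limits $u_1^\infty,u_2^\infty$ of the rescaled Green functions and identifying $\Delta u_i^\infty$ with the corresponding tangent measures, it uses Lemma \ref{l:samew} to show that the \emph{difference} $u^\infty := u_1^\infty - h(\xi)^{-1}u_2^\infty$ is harmonic in $\frac12\bB$, that $u^\infty\not\equiv 0$, and that $\supp\omega_1^\infty\cap\frac12\bB = \{u^\infty=0\}\cap\frac12\bB$. Since a nontrivial harmonic function has a real analytic zero variety, one gets (via an argument as in \cite{AAM16}) $d\omega_1^\infty = -c_n(\nu\cdot\nabla u_1^\infty)\,d\cH^n$ on the reduced boundary of $\{u_1^\infty>0\}$ inside $\frac12\bB$. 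Crucially, the paper then does \emph{not} claim $\omega_1^\infty\in\cF$; it observes that $\omega_1^\infty\ll\cH^n|_{\d^*\{u_1^\infty>0\}}$, so at $\cH^n$-a.e.\ point the tangent measure of $\omega_1^\infty$ is flat, and then appeals to Lemma \ref{l:abstan} and Theorem \ref{t:ttt} to produce a second-generation tangent measure in $\cF\cap\Tan(\omega_1,\xi)$. Your proposal claims the stronger conclusion $\omega_1^\infty\in\cF$ directly from ACF rigidity.

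The gap in your argument is the assumption, buried in the phrase ``a further diagonal extraction produces $\Theta_1(r_j)\Theta_2(r_j)\to L\in(0,\infty)$,'' that the limit $L$ can be taken strictly positive. Lemmas \ref{l:beurling} and \ref{l:otherside} give $\Theta_1(r)\Theta_2(r)\approx\gamma_u(\xi,cr)^{1/2}$, and Theorem \ref{t:ACF} only guarantees that $\gamma_u(\xi,r)$ decreases monotonically to some $\gamma_0\geq 0$ as $r\to 0$; nothing rules out $\gamma_0=0$, in which case $L=0$ along \emph{every} sequence. When $\gamma_0=0$ the scaling identity $\gamma_{w_j}(0,\rho)=\gamma_u(\xi,r_j\rho)/[\Theta_1(r_j)\Theta_2(r_j)]^2$ gives an indeterminate $0/0$ form, and the ratios $\gamma_u(\xi,r_j\rho_1)/\gamma_u(\xi,r_j\rho_2)$ need not tend to $1$, so you cannot conclude that $\gamma_{w_\infty}(0,\rho)$ is constant in $\rho$, which is precisely what ACF rigidity requires. (Observe that $\gamma_0>0$ is roughly equivalent to positivity of the lower $n$-density of $\omega_1$ at $\xi$, which is essentially a consequence of the theorem rather than a permissible hypothesis.) There is also a secondary technical obstacle you flag yourself: the functional $\gamma$ carries the singular weight $|y|^{1-n}$, and upgrading weak $W^{1,2}_{\loc}$ convergence of $\nabla w_j^i$ to convergence of these weighted Dirichlet integrals is not automatic. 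Both difficulties are circumvented by the paper's harmonicity-plus-analytic-variety argument, which is why that route is preferred when only $L^2$ convergence of blow-ups is available.
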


\begin{proof}Recall that we denote $\bB=B(0,1)$.

Let $\omega^{\infty}_{1}\in \Tan(\omega_{1},\xi)$,
$c_{j}\geq 0$, and $r_{j}\rightarrow 0$ be such that $\omega^{j}_{1}=c_{j}T_{\xi,r_{j}}[\omega_{1}]\rightarrow \omega^{\infty}_{1}$. %{\color{blue} 
As $\omega_{1}^{\infty}\neq 0$, there is $R>0$ so that $\omega_{1}^{\infty}(B(0,R))\neq 0$. Without loss of generality, we will assume $R=1/4$, and we can pick $c_{j}$ so that
\begin{equation}\label{e:w>0}
\omega_{1}^{\infty}\bigl(\tfrac{1}{4}\bB\bigr)=1.
\end{equation}
%}

Let ${\Omega_i^j}=T_{\xi,r_{j}}(\Omega_i)$.  Let $u_{1}(x)=G_{\Omega_{1}}(x,p_1)$ on $\Omega_{1}$ and $u_{1}(x)=0$ on $(\Omega_{1})^{c}$ (since we are assuming Wiener regularity, this is continuous). Set
\[u^{j}_{1}(x)=c_{j}\,u_{1}(xr_{j}+\xi)\,r_{j}^{n-1}.\]
Define $u_{2}$ and $u^{j}_{2}$ similarly.

Without loss of generality, by passing to a subsequence we may assume that 
\begin{equation}\label{e:om+nondeg}
\cH^{n+1}(B(\xi,r_{j})\backslash \Omega_{1})\geq \frac{r_{j}^{n+1}}{2}.
\end{equation}
Thus, for $z\in B(\xi,r_{j})$,
\[
\omega_{\Omega_{1}}^{z}(B(\xi,\delta^{-1}r_{j}))
\gec  \frac{\cH^{n+1}(B(\xi,r_{j})\backslash \Omega_{1}))}{r^{n+1}}
\gec 1.\]
Hence, 
\begin{equation}\label{e:Green-lowerbound2}
 \omega_{1}(B(\xi,\delta^{-1}r_{j}))\gtrsim r_{j}^{n-1}\, u_{1}(x)\quad\mbox{
 for all $x\in B(\xi,r_{j})\cap\Omega_1$,}
 \end{equation}
 and so, 
 \begin{equation}\label{e:Green-lowerbound3}
 \omega^{j}_{1}(B(0,\delta^{-1}))\gtrsim \, u^{j}_{1}(x)\quad\mbox{
 for all $x\in \bB\cap\Omega_1^j$,}
 \end{equation}

By Caccioppoli's inequality for subharmonic functions and the uniform boundedness of $u_1^j$ in $\bB$, we deduce that, for $i=1,2$,
\[
\limsup_{j\rightarrow\infty}
\|\nabla u^j_{1}\|_{L^{2}(\frac12 \bB)}
\lesssim \limsup_{j\rightarrow\infty} \|u^j_{1}\|_{L^2(\bB)}
\lesssim \limsup_{j\rightarrow\infty}  \omega^{j}_1(B(0,\delta^{-1}))
\leq \omega^{\infty}_{1}(\cnj{B(0,\delta^{-1})})
\]

See (3.7) of \cite{KPT} for a similar argument.
By the Rellich-Kondrachov theorem, the unit ball of the Sobolev space $W^{1,2}(\frac12 \bB)$ is relatively
compact in $L^2(\frac12 \bB)$, and thus there exists a subsequence of the functions $u^j_{1}$ which
converges {\em strongly} in $L^2(\frac12 \bB)$ to another function $u_{1}^{\infty}\in L^2(\frac12 \bB)$.
It easy to check that
\begin{equation*}
\int \phi\,  d\omega^{j}_{1} = \int \Delta\phi  \,u^j_{1}\,dx,
\end{equation*}
for any $C^\infty$ function $\vphi$ compactly supported in $\frac{1}{2}\bB$. Then passing to a limit, it follows that
\begin{equation}\label{eq302}
\int \phi\,  d\omega^{\infty}_{1} = \int \Delta\phi  \,u^\infty_{1}\,dx,\,\,\, \text{for any}\,\, \vphi \in C^\infty_c(\tfrac{1}{2}\bB).
\end{equation}

Observe now that %\mih{$||u^{j}_{1}||_{L^{\infty}(\bB )} \longmapsto||u^{j}_{1}||_{L^{\infty}(\bB \cap \Omega_1^{j})}$}
\begin{align*}
1 \stackrel{\eqref{e:w>0}}{=} &\omega_{1}^{\infty}(\frac{1}{4} \bB)
 \leq \int \phi \,d\omega^{\infty}_{1}
=\int_{\Omega_1}u^{\infty}_{1}\Delta \phi \,dx
=\lim_{j} \int_{\Omega_1^{j}} u^{j}_{1} \Delta \phi\, dx\\
& 
\leq \lim_{j} \ps{\int_{\bB\cap \Omega_1^{j} \cap \{u^{j}_{1}>t\}} u^{j}_{1} \Delta \phi\, dx
+ \int_{\bB\cap \Omega_1^{j} \cap \{u^{j}_{1}\leq t\}} u^{j}_{1} \Delta \phi \,dx}\\
& \leq\liminf_{j} \ps {| \{x\in \bB\cap \Omega_1^{j} : u^{j}_{1}>t\} | \cdot ||u^{j}_{1}||_{L^{\infty}(\bB \cap \Omega_1^{j})} ||\Delta \phi||_{L^{\infty}(\bB)}}
+ t ||\Delta \phi||_{L^{\infty}(\bB)} \\
& \stackrel{\eqn{Green-lowerbound3}}{\lec} \liminf_{j} \ps{|\{x\in \bB\cap \Omega_1^{j} : u^{j}_{1}>t\}|\, \omega^{\infty}_{1}\left(\cnj{B(0,\delta^{-1})}\right)
+ t },
\end{align*}
and so, for $t$ small enough,
\[
|\bB\cap \Omega^{j}_{1}|\geq |\{x\in \bB\cap \Omega_1^{j}: u^{j}_{1}(x)>t\}|\gec \omega^{\infty}_{1}(\cnj{B(0,\delta^{-1})})^{-1}.\]
In particular,
\begin{equation}\label{e:om-nondeg}
|B(\xi,r_{j})\backslash \Omega_{2}|\geq |B(\xi,r_{j})\cap \Omega_{1}|\gec r_{j}^{n+1}\omega^{\infty}_{1}(\cnj{B(0,\delta^{-1})})^{-1}.
\end{equation}

Thus, by the same arguments as earlier in proving \eqn{Green-lowerbound3}, we have that for $j$ large,
\begin{equation}\label{e:Green-lowerbound4}
 \omega^{j}_{2}(B(\xi,\delta^{-1}r_j))\gtrsim \, u^{j}_{2}(x)\,\omega^{\infty}_{1}(\cnj{B(\xi,\delta^{-1}r_j)})^{-1}, \quad\mbox{
 for all $x\in B(\xi,r_j)\cap\Omega_{2}$.}
 \end{equation}
 
 Again, we can pass to a subsequence so that $u^{j}_{2}$ converges in $L^{2}(\frac{1}{2}\bB)$ to a function $u^{\infty}_{2}$, and it holds
 \begin{equation}\label{e:intphu-}
\int \phi \, d\omega^{\infty}_{2} = \int \Delta\phi  \,u^\infty_{2}\,dx.
\end{equation}

Now set $u^{\infty}=u^{\infty}_{1}-h(\xi)^{-1}u^{\infty}_{2}$. Then by Lemma \ref{l:samew},
\begin{align*}
\int u^{\infty} \Delta \phi &=\int  u^{\infty}_{1} \Delta \phi  - h(\xi)^{-1}\int  u^{\infty}_{2} \Delta \phi =\int \phi \, d\omega^{\infty}_{1}  - h(\xi)^{-1} \int \phi \, d\omega^{\infty}_{2} \\
&=\int \phi \, d\omega^{\infty}_{1}-  h(\xi)^{-1} h(\xi) \int \phi \, d\omega^{\infty}_{1}  =0,
\end{align*}
for all $\phi\in C_{c}^{\infty}(\frac{1}{2}\bB)$. Therefore, $u^{\infty}$ is harmonic in $\frac{1}{2}\bB$. 

Next we claim that $u^\infty\not\equiv0$ and that
\begin{equation}
\tfrac{1}{2}\bB\cap \supp \omega^{\infty}_1=\{u^{\infty}=0\}\cap \tfrac{1}{2} \bB.
\end{equation}
First note that as $u_{i}^{j}\rightarrow u_{i}^{\infty}$ in $L^{2}(\frac{1}{2}\bB)$ and $u^{j}_{i}$ have disjoint supports for all $j$, we know that 
\[
0=\lim_{j\rightarrow\infty} \int_{\frac{1}{2}\bB} u_{1}^{j}u_{2}^{j}dx
=\int_{\frac{1}{2}\bB} u_{1}^{\infty}u_{2}^{\infty}dx\]
and so $u_{1}^{\infty}$ and $u_{2}^{\infty}$ cannot be nonzero simultaneously in $\frac{1}{2}\bB$, 
except in a set of zero Lebesgue measure.
Since $u_1^\infty\not\equiv0$ (by \rf{eq302}), this implies that $u^\infty\not\equiv0$. %\mih{I am not totally convinced about this part of the argument. Somehow we say that either the one is zero or the other apart from a set with Lebesgue measure zero. But we know that the set where the harmonic function $u^\infty$ vanishes in a ball has Lebesgue measure zero since its Hausdorff measure is $\sim r^n$.}
Another consequence is that, in $\frac{1}{2}\bB$,
\begin{equation}\label{e:uposneg}
u_1^\infty = u^\infty\,\chi_{\{u^\infty>0\}} \quad\mbox{and}\quad u_2^\infty = - h(\xi)\,u^\infty\,\chi_{\{u^\infty<0\}},
\end{equation}
which in particular implies that $u_1^\infty$ and $u_2^\infty$ are continuous in $\frac{1}{2}\bB$, because $u^\infty$ is harmonic there.

Observe now that for each $i=1,2$,
\begin{equation}\label{e:du>0}
\supp \omega_{1}^{\infty} \cap \frac{1}{2}\bB= \d\{u_{i}^{\infty}>0\}\cap \frac{1}{2}\bB.
\end{equation}
This is essentially proven in \cite[Lemma 4.7]{AAM16}. We omit the details. Thus, we have 
\[
\supp \omega_{1}^{\infty} \cap \frac{1}{2}\bB\subset  \{u_{1}^{\infty}=0\} \cap \{u_{2}^{\infty}=0\}\cap \frac{1}{2}\bB
\subset
\{u^{\infty}=0\}\cap \frac{1}{2}\bB.\]

For the converse inclusion, note that
 if $x\in \frac{1}{2} \bB$ and $u^{\infty}(x)=0$, then $u_{1}^{\infty}(x)=u_{2}^{\infty}(x)=0$, since $u^{\infty}=u_{1}^{\infty}-h(\xi)^{-1}u_{2}^{\infty}$ and we have just shown that
$u_{1}^{\infty}$ and $u_{2}^{\infty}$ cannot be positive simultaneously. Further,  $u^{\infty}$ cannot vanish identically in any ball containing $x$ in $\frac{1}{2}\bB$ (because it is harmonic and not identically $0$), and thus either $u_{1}^{\infty}$ or $u_{2}^{\infty}$ must be positive in that ball. These two facts imply
\[
\{u^{\infty}=0\}\cap \frac{1}{2}\bB\subset \ps{\d\{u_{1}^{\infty}>0\} \cup \d\{u_{2}^{\infty}>0\}}\cap \frac{1}{2}\bB =\supp \omega_{1}^{\infty}\cap \frac{1}{2}\bB.\]
This proves the claim.

In particular, $\frac{1}{2}\bB\cap \supp \omega^{\infty}_1$ is a smooth real analytic variety. Then, 
arguing as in \cite{AMT}, for example, one deduces that
$$d\omega_1^\infty|_{\frac{1}{2}\bB}= -c_n(\nu_{\Omega^{\infty}_1} \!\cdot\! \nabla u^\infty_1)\, d\cH^n|_{
\d^*\Omega^{\infty}_1\cap \frac{1}{2}\bB},$$
where $\d^*\Omega^{\infty}_1$ is the reduced boundary of $\Omega^{\infty}_1=
\{u_1^\infty>0\}$ and $\nu_{\Omega^{\infty}_1}$ is the measure theoretic outer unit normal.
Hence,  
$\omega^{\infty}_{1}$ is absolutely continuous with respect to surface measure of $\d\Omega^{\infty}_1$ in $\frac{1}{2}\bB$. Thus, since the tangent measure at $\cH^n$-almost every point of $\d\Omega_{1}^{\infty}$ is contained in $\cF$, using \Lemma{abstan}, we can take another tangent measure of $\omega^{\infty}_{1}$ that is in $\cF$ and apply Theorem \ref{t:ttt}
\end{proof}
\vv

%The previous lemma implies that
%\begin{equation}\label{e:tancapf}
%\Tan(\omega_{1},\xi)\cap \cF\neq\emptyset \;\;  \mbox{ whenever } \;\; \Tan(\omega_{1},\xi)\neq\emptyset  \mbox{ and }\xi\in \Gamma.
%\end{equation}

The following lemma has an identical proof to that of \cite[Lemma 5.11]{AMT}.

\begin{lemma}\label{l:tanconnect}
Let $\Omega_{1}$ and $\Omega_{2}$ be as above and let $\xi\in\Gamma$. If $\Tan(\omega_{1},\xi)\cap \cF\neq\varnothing$, then $$\lim_{r\rightarrow 0} d_{1}(T_{\xi,r}[\omega_{1}],\cF)= 0.$$ In particular, $\Tan(\omega_{1},\xi)\subset \cF$. 
\end{lemma}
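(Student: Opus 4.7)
The plan is to carry out a Preiss-type connectedness argument for tangent measures, exactly as in \cite[Lemma 5.11]{AMT}; the argument there does not actually invoke the capacity density condition and so transfers verbatim to our setting.

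First, I would verify that the function $f(r) := d_{1}(T_{\xi,r}[\omega_{1}], \cF)$ is continuous in $r>0$. Weak continuity of $r \mapsto T_{\xi, r}[\omega_{1}]$ is immediate from the change of variables $\int \phi\, dT_{\xi, r}[\omega_{1}] = \int \phi((x-\xi)/r)\, d\omega_{1}(x)$ and dominated convergence on compactly supported test functions $\phi$; continuity of $f$ then follows from the standard continuity property of the cone-distance (item (8) of the lemma cited from \cite{KPT} in Section \ref{s:tan}). The asymptotic doubling property \eqref{e:limdub}, valid for $\omega_{1}$-a.e.\ $\xi \in \Gamma$ by Lemma \ref{l:mainlem}, ensures that $\Tan(\omega_{1}, \xi)$ has compact basis (Theorem \ref{t:compactdouble}), so that subsequences of normalized rescalings converge (Lemma \ref{l:tanexist}).

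Next, I would argue by contradiction: suppose $\alpha := \limsup_{r \to 0^{+}} f(r) > 0$. By hypothesis, $\Tan(\omega_{1},\xi)\cap \cF \neq \varnothing$, so $\liminf_{r \to 0^{+}} f(r) = 0$. Applying the intermediate value theorem to the continuous function $f$, I extract scales $t_{j} \to 0$ with $f(t_{j}) = \alpha/2$. Passing to a subsequence, the normalized rescalings $\mu_{j} := T_{\xi, t_{j}}[\omega_{1}]/F_{1}(T_{\xi, t_{j}}[\omega_{1}])$ converge weakly to some $\mu^{\infty} \in \Tan(\omega_{1}, \xi)$ with $d_{1}(\mu^{\infty}, \cF) = \alpha/2 > 0$, hence $\mu^{\infty} \notin \cF$.

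The main obstacle is then to derive a contradiction from the existence of this intermediate $\mu^{\infty}$. My plan is to leverage the two-fold self-similarity from Theorem \ref{t:ttt}: $T_{0, s}[\mu^{\infty}] \in \Tan(\omega_{1}, \xi)$ for all $s > 0$, and $\Tan(\mu^{\infty}, y) \subset \Tan(\omega_{1}, \xi)$ for all $y \in \supp \mu^{\infty}$. Applying the same intermediate-value argument to the continuous function $g(s) := d_{s}(\mu^{\infty}, \cF) = d_{1}(T_{0, s}[\mu^{\infty}], \cF)$, I would conclude that either some $T_{0, s_{0}}[\mu^{\infty}]$ lies in $\cF$, which would force $\mu^{\infty} \in \cF$ since $\cF$ is a $d$-cone (contradiction), or by passing to a tangent measure of $\mu^{\infty}$ at $0$ one produces a measure still at positive distance from $\cF$ that nonetheless inherits the compact-basis property, allowing the argument to be iterated until a contradiction is reached. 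The finer bookkeeping required for this iteration is precisely the one carried out in \cite[Lemma 5.11]{AMT}. Finally, the parenthetical conclusion $\Tan(\omega_{1}, \xi) \subset \cF$ follows at once: any $\nu \in \Tan(\omega_1,\xi)$ is a weak limit of normalized $T_{\xi, r_{j}}[\omega_{1}]$, so $d_{1}(\nu, \cF) = \lim_{j} f(r_{j}) = 0$, and hence $\nu \in \cF$ by closedness of $\cF$ on its basis.
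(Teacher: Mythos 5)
The paper's ``proof'' of this lemma is simply the sentence ``the following lemma has an identical proof to that of \cite[Lemma 5.11]{AMT}'', so on one level your proposal — which also defers to \cite[Lemma 5.11]{AMT} — is aligned with the paper's intent. The opening steps (continuity of $r\mapsto d_1(T_{\xi,r}[\omega_1],\cF)$, the intermediate value theorem, extraction of an ``intermediate'' tangent measure $\mu^\infty$ at positive $d_1$-distance from $\cF$, and the compact-basis/doubling hypotheses from Theorem \ref{t:compactdouble}, Lemma \ref{l:tanexist} and \eqn{limdub}) are correct, and your derivation of the ``in particular'' statement from the main limit is fine.

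However, your account of where the contradiction comes from is not quite right, and as written it has a real gap. The intermediate value theorem applied to $g(s)=d_1(T_{0,s}[\mu^\infty],\cF)$ never produces a scale $s_0$ with $g(s_0)=0$, only intermediate positive values; so the first branch of your dichotomy (``$T_{0,s_0}[\mu^\infty]\in\cF$, hence $\mu^\infty\in\cF$'') does not occur. The second branch (``pass to a tangent measure of $\mu^\infty$ at $0$ and iterate'') has no visible termination criterion: each iterate again lives in $\Tan(\omega_1,\xi)$ at positive distance from $\cF$, and nothing forces the distances to decrease to zero. What actually closes the argument in \cite[Lemma 5.11]{AMT} (following the connectivity argument of Preiss and \cite{KPT}) is a quantitative \emph{separation} or \emph{rigidity} step: one shows there is a threshold $\delta_0>0$ such that any $\nu\in\Tan(\omega_1,\xi)$ with $d_1(\nu,\cF)<\delta_0$ must already lie in $\cF$. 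This gap is obtained from the structural description of tangent measures established in the proof of Lemma \ref{l:capf} — every $\nu\in\Tan(\omega_1,\xi)$ is, near the origin, a weighted surface measure on the zero set of a harmonic function, and if such a zero set is $\delta_0$-close to a hyperplane then the harmonic function is linear and $\nu$ is flat. With this separation in hand the contradiction is immediate from a \emph{single} application of the intermediate value theorem to the original function $f(r)=d_1(T_{\xi,r}[\omega_1],\cF)$: if $\limsup f>0$ then one can find a tangent measure at distance strictly between $0$ and $\delta_0$, contradicting the gap. No iteration is needed. So the rigidity/separation step is not ``finer bookkeeping'' but the crux of the argument, and it should be stated explicitly in any outline.
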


By \Theorem{ttt} and \Lemma{tanconnect}, $\Tan(\omega_{1},\xi)\subset \cF$ for $\omega_{1}$ a.e. $\xi\in \Gamma$. By Theorem \ref{t:compactdouble}, $\omega_{1}$ and $\omega_{2}$ are pointwise doubling at each such point, which proves \eqn{limdub}. Also, \Lemma{dublemma} implies \eqn{limcomp}. We will now show that \eqn{limbeta} holds for such a $\xi$.

%\vv
%\begin{lemma}
%For $\xi\in F$, 
%\begin{equation}\label{e:beta1tozero}
%\lim_{r\rightarrow 0} \beta_{\omega_{1},1}(\xi,r) \frac{r^{n}}{\omega_{1}(B(\xi,r))}=0.
%\end{equation}
%\end{lemma}
%
%\begin{proof}
 Let $\omega_{r}= T_{\xi,r}[\omega_{1}]$.  By the compactness of $\cF$ and the definition of $d_{1}$, there is an $n$-plane $V$ such that, if $\mu=\cH^{n}|_{V}/F_{1}(\cH^{n}|_{V})$, then 
\begin{equation}\label{e:F1/F1}
\lim_{r\rightarrow 0} F_{1}(\omega_{r}/F_{1}(\omega_{r}),\mu)=0.
\end{equation}
Let $\phi$ be a $2$-Lipschitz function which equals $1$ on $\frac{1}{2}\bB$ and $0$ on $\bB^{c}$, and set $\psi=\dist(x,V)\phi$. Note that for $r<r_{0}/2$, \eqn{widub} implies $F_{1}(\omega_{r})\lec \omega_{r}(\frac{1}{2}\bB)$, and so
\begin{align*}
 F_{1}(\omega_{r}/F_{1}(\omega_{r}),\mu)
 & \gec F_{1}(\omega_{r})^{-1}\int_{\bB} \psi (x)\,d\omega_{r}^{1}(x)- \int_{\bB} \psi (x)\,d\mu(x) \\
 & \gec \,\avint_{\frac{1}{2}\bB}\dist(x,V)\,d\omega_{r}(x)-0 \\
 & =\,\avint_{B(\xi,r/2)} \frac{\dist(x,rV+\xi)}{r}\,d\omega_{1}(x)
 \geq  \frac{(r/2)^{n}}{\omega_{1}(B(\xi,r/2))}\,\beta_{\omega_{1},1}(\xi,r/2).
\end{align*}
This and \eqn{F1/F1} imply \eqn{limbeta}.

%Take any $r_{j}\rightarrow 0$. We can pass to a subsequece so that $T_{\xi,r_{j}}[\omega_{1}]$ converges weakly to $\omega_{1}^{\infty}$. By \Lemma{samew}, $T_{\xi,r_{j}}[\omega_{2}]\rightarrow h(\xi)\omega_{1}^{\infty}$. By \eqref{e:beurling} and \eqn{widub}, this implies there is $r_{\xi}>0$ so that for all $r<r_{\xi}$,
%\[
%\frac{\omega_{1}(B(\xi,r))}{r^{n}}\lec h(\xi)^{-1} \gamma(\xi,r)\leq h(\xi)^{-1} \gamma(\xi,r_{\xi})<\infty\]
%since $\gamma(\xi,r)$ is increasing by Theorem \ref{t:ACF}. Thus, \eqref{e:F1/F1} implies
%\begin{align*}
%\lim_{j} \beta_{\omega_{1},1}(\xi,r_{j}/2)
%& \leq  \lim_{j} \frac{\omega_{1}(B(\xi,r_{j}/2))}{(r_{j}/2)^{n}} F_{1}(\omega_{r_{j}}/F_{1}(\omega_{r_{j}}),\mu)\\
%& \lec \lim_{j}h(\xi)^{-1}\gamma(\xi,r_{\xi}) F_{1}(\omega_{r_{j}}/F_{1}(\omega_{r_{j}}),\mu)=0.
%\end{align*}
%We have thus shown that every sequence $r_{j}\downarrow 0$ has a subsequence such that $\lim_{j} \beta_{\omega_{1},1}(\xi,r_{j}/2)=0$, which implies $\lim_{r\downarrow 0} \beta_{\omega_{1},1}(\xi,r/2)=0$, which proves the lemma.
%\end{proof}

To conclude the proof of \Lemma{mainlem}, for $j,k\in \bN$, set
\begin{multline*}
E_{j,k}=\Bigl\{\xi\in \Gamma: \omega_{i}(B(\xi,2r))\leq k\, \omega_{i}(B(\xi,r)), \\
\cH^{n+1}(B(\xi,r)\cap \Omega_{i})\geq k^{-1} r^{n+1}, \mbox{ and }\\
\beta_{\omega_{1},1}(\xi,r) \frac{r^{n}}{\omega_{1}(B(\xi,r))} <\ve  \mbox{ for }i\in \{1,2\}, \;\; 0<r<1/j\Bigr\}.
\end{multline*}
Then we have shown above that almost every $\xi\in \Gamma$ lies in one of these sets, and so there must be one for which $\omega_{1}(E_{j,k})>0$. Setting $F=E_{j,k}$, $r_{0}=1/j$, and $C=c^{-1}=j$ finishes the proof of \Lemma{mainlem}.

\vvv

\section{Riesz transforms}

In this section we will complete the  proof of Theorem \ref{t:main} under the additional
assumption that both $\Omega_1$ and $\Omega_2$ are Wiener regular. So given 
 $E\subset \d\Omega_1\cap \d\Omega_2$ so that $\omega_1\ll\omega_2\ll\omega_1$ on $E$, we have to show that $E$ contains an $n$-rectifiable subset $F$ on which $\omega_1,\omega_2$ are mutually absolutely continuous with respect to $\cH^{n}$. 

 Reducing $E$ if necessary, we may assume that
$\diam(E)\leq \frac1{10}\,\min(\diam(\Omega_1),\diam(\Omega_2)$.
Let $\wt B$ be some ball centered at $E$ with radius $r(\wt B)=2\diam(E)$. We choose the poles $p_i$ for $\omega_i$
so that $p_i\in\Omega_i\cap 2\wt B\setminus \wt B$.
Further, by interchanging $\Omega_1$ and $\Omega_2$ if necessary, we may assume also that
$$\HH^{n+1}(\wt B\setminus \Omega_1)\approx \HH^{n+1}(\wt B),$$
so that, by Lemma \ref{lembourgain}
\begin{equation}\label{eqbour10}
\omega_1(2\delta^{-1}\wt B) = \omega_1^{p_1}(2\delta^{-1}\wt B)\approx 1.
\end{equation}

 \vv
 
 Given $\gamma>0$, a Borel measure $\mu$ and a ball $B\subset\R^{n+1}$, we denote
$$P_{\gamma,\mu}(B) = \sum_{j\geq0} 2^{-j\gamma}\,\Theta_\mu(2^jB),$$
where $\Theta_\mu(B) = \frac{\mu(B)}{r(B)^n}$.

\vv

Given $a,\gamma>0$, we say that a ball $B$ is {\it $a$-$P_{\gamma,\mu}$-doubling} if
$$P_{\gamma,\mu}(B) \leq a \,\Theta_\mu(B).$$

\begin{lemma}\label{l:adoubling}
There is $\gamma_{0}\in (0,1)$ so that the following holds. Let $\Omega\subset \bR^{n+1}$ be any domain and $\omega$ its harmonic measure. For all $\gamma>\gamma_{0}$, there exists some big enough constant $a=a(\gamma,n)>0$ such that  for $\omega$-a.e.
$x\in\R^{n+1}$ there exists a sequence of $a$-$P_{\gamma,\omega}$-doubling balls $B(x,r_i)$, with
$r_i\to0$ as $i\to\infty$.
%In particular,  there exists some big enough constant $a>0$ depending only on $n$ such that for $\omega$-a.e. $x\in\R^{n+1}$ there exists a sequence of $a$-$P_{1,\omega}$-doubling balls $B(x,r_i)$, with
%$r_i\to0$ as $i\to\infty$.
\end{lemma}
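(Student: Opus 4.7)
The plan is to argue by contradiction. Assuming the conclusion fails, a standard measure-theoretic exhaustion produces a Borel set $F\subset \supp\omega$ with $\omega(F)>0$ and a uniform threshold $r_0>0$ such that $B(x,r)$ is not $a$-$P_{\gamma,\omega}$-doubling for every $x\in F$ and every $r\in (0,r_0)$. Rewriting the non-doubling condition directly in terms of $\omega$ gives
\[
\omega(B(x,r)) < \frac{1}{a-1}\sum_{j\geq 1} 2^{-j(\gamma+n)}\omega(B(x,2^j r))\qquad \text{for } x\in F,\ r\in(0,r_0),
\]
and the goal is to derive $\omega(F)<\omega(F)$ by integrating this pointwise inequality against a suitable cover of $F$.

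The main tool is the Vitali covering lemma. For $r<r_0/5$, I would extract from $\{B(x,r):x\in F\}$ a disjoint subfamily $\{B_i=B(x_i,r)\}$ with $F\subset\bigcup_i 5B_i$. Using $\omega(F)\leq \sum_i\omega(5B_i)$, applying the non-doubling inequality at each center (with the $5r$-ball), and interchanging the sums in $i$ and $j$ yields
\[
\omega(F) \leq \frac{1}{a-1}\sum_{j\geq 1} 2^{-j(\gamma+n)}\sum_i \omega(B(x_i,5\cdot 2^j r)).
\]
The inner sum is controlled by a volumetric overlap estimate: since the $B_i$ are disjoint, the enlarged balls $B(x_i,5\cdot 2^j r)$ have multiplicity at most $C\cdot 2^{j(n+1)}$, so using $\omega(\R^{n+1})\leq 1$ one obtains $\sum_i\omega(B(x_i,5\cdot 2^j r))\leq C\cdot 2^{j(n+1)}$. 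Substituting gives $\omega(F)\leq \frac{C}{a-1}\sum_{j\geq 1} 2^{j(1-\gamma)}$, a convergent geometric series whenever $\gamma>1$; choosing $a$ large enough then forces $\omega(F)\leq \omega(F)/2$, the desired contradiction. This naive implementation already delivers the lemma with $\gamma_0=1$.

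The genuine difficulty, and the reason one can hope for a threshold $\gamma_0$ strictly below $1$, lies in sharpening the crude $(n{+}1)$-dimensional overlap bound above. A natural approach is to bootstrap by exploiting the non-doubling hypothesis also at the intermediate scales $2^k r$, $1\leq k<j$, so as to iteratively cancel part of the growth of $\omega(B(x_i,2^j r))$ against smaller radii, effectively replacing the overlap factor $2^{j(n+1)}$ by $C\cdot 2^{j(n+\tau)}$ for some $\tau=\tau(n)<1$ and restoring convergence of the $j$-series for every $\gamma>1-\tau$; one would then take $\gamma_0:=1-\tau$. Executing this bootstrap carefully, without a circular use of the non-doubling assumption on the same scales, is the principal technical obstacle; once it is in place, choosing $a=a(\gamma,n)$ sufficiently large at the end of the argument closes the proof.
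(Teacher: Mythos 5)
Your argument has two problems, one fixable and one more serious. The final step of the naive implementation is not a contradiction: you arrive at $\omega(F)\leq \frac{C}{a-1}\sum_{j\geq 1}2^{j(1-\gamma)}$, whose right-hand side is an absolute constant with no trace of $\omega(F)$, and since $a$ must depend only on $\gamma$ and $n$ (before you know how small $\omega(F)$ is), ``choosing $a$ large'' gives $\omega(F)\leq C'/(a-1)$, not $\omega(F)\leq\omega(F)/2$. This is repairable, for instance by iterating the identity $P_{\gamma,\omega}(B)=\Theta_\omega(B)+2^{-\gamma}P_{\gamma,\omega}(2B)$ together with the non-doubling bound $\Theta_\omega(B)<a^{-1}P_{\gamma,\omega}(B)$, which gives $P_{\gamma,\omega}(B(x,2^{-k}))<\bigl(\tfrac{a\,2^{-\gamma}}{a-1}\bigr)^{k-k_0}P_{\gamma,\omega}(B(x,2^{-k_0}))$ and hence the pointwise growth $\omega(B(x,r))\lesssim(r/r_0)^{n+\gamma'}$ for $x\in F$, $r<r_0$, with $\gamma'<\gamma$ as close as you like for $a$ large. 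But this repaired argument, like yours, only gives $\gamma_0=1$: the exponent $n+\gamma'$ must beat the ambient dimension $n+1$ to force $\omega(F)=0$ by a covering, which requires $\gamma'>1$.

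The deeper issue is that $\gamma_0<1$ is simply false for arbitrary finite Radon measures: for Lebesgue measure restricted to a ball one computes $P_{\gamma,\mu}(B(x,r))/\Theta_\mu(B(x,r))\approx r^{\gamma-1}\to\infty$ as $r\to 0$ for every $\gamma<1$, so no $a$-$P_{\gamma}$-doubling balls of small radius exist at any interior point. Hence no refinement of a packing or overlap count (the $2^{j(n+1)}$ bound is sharp anyway) can push $\gamma_0$ below $1$ without using that $\omega$ is harmonic measure. The missing ingredient is Bourgain's dimension theorem: harmonic measure of any domain in $\bR^{n+1}$ has Hausdorff dimension at most $n+1-b_n$ for some $b_n>0$ depending only on $n$. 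Taking $\gamma_0:=1-b_n$ and, for $\gamma>\gamma_0$, choosing $a$ large so that $\gamma'\in(1-b_n,\gamma)$, the Frostman-type consequence $\omega|_F\lesssim\cH^{n+\gamma'}$ of the pointwise growth estimate, combined with the existence of a set $A$ of full $\omega$-measure with $\cH^{n+\gamma'}(A)=0$, yields $\omega(F)=\omega(F\cap A)+\omega(F\setminus A)=0$, the desired contradiction. This is exactly the ingredient your proposal omits.
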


\vv

From now on we assume that $a$ and $\gamma$ are fixed constants such that for any domain $\Omega\subset\R^{n+1}$, for
$\omega$-a.e.
$x\in\R^{n+1}$ there exists a sequence of $a$-$P_{\gamma,\omega}$-doubling balls $B(x,r_i)$, with
$r_i\to0$ as $i\to\infty$.

\vv
Recall that the harmonic measures $\omega_1$ and $\omega_2$ are mutually absolutely continuous on $E\subset\d\Omega_1\cap \d\Omega_2$, and that $h$ denotes the density function $h(\xi)=\frac{d\omega_{2}}{d\omega_1}(\xi)$ and that we assume that $\Omega_1$, $\Omega_2$ are Wiener regular.\vv

Let $E_{m}$ be one of the sets from \Lemma{mainlem} and fix $m\geq 1$ so that $\omega_{1}(E_{m})>0$.

\vv

\begin{lemma}\label{lem:ver1}
Let $m\geq1$ and $\delta>0$. For $\omega_1$-a.e.\ $x\in E_m$, there is $r_{x}>0$ so that 
for any $a$-$P_{\gamma,\omega_1}$-doubling ball $B(x,r)$ with radius $r\leq r_x$ there exists a subset $G_m(x,r)\subset E_m\cap B(x,r)$ such that 
\begin{equation}\label{e:mainl1}
\frac{\omega_1(B(z,t))}{t^n} \lec \frac{\omega_1(B(x,r))}{r^n} \quad\mbox{for all $z\in G_m(x,r)$, $0<t\leq 2r$,}
\end{equation}
and so that $\omega_1 (B(x,r) \setminus G_m(x,r)) \leq \delta \,\omega_1(B(x,r))$.
\end{lemma}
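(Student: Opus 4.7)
The plan is to extract from the tangent-measure analysis of Section~5 a well-defined pointwise $n$-density on $E_m$, and then to combine Egorov's theorem with Lebesgue differentiation to produce the desired subset $G_m(x,r)$ directly. A naive Vitali stopping-time argument on the set where $\omega_1(B(z,t))/t^n>K\,\omega_1(B(x,r))/r^n$ runs into the obstruction that Vitali followed by the $E_m$-doubling controls $\omega_1$ of the bad set by $\sum_i\omega_1(B(z_i,t_{z_i}))\le Cm^3\omega_1(B(x,r))$ independently of $K$, so such an approach alone cannot produce $\delta$-smallness of the bad set.

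By \Lemma{capf}, \Lemma{tanconnect} and \Theorem{ttt}, for $\omega_1$-a.e.\ $x\in E_m$ we have $\Tan(\omega_1,x)\subset\cF$, the $d$-cone of $n$-flat measures, and $d_{1}(T_{x,r}[\omega_1]/\omega_1(B(x,r)),\cF)\to 0$ as $r\to 0$. From this one concludes that the $n$-density
\[
\theta(x):=\lim_{t\to 0}\frac{\omega_1(B(x,t))}{t^n}
\]
exists in $(0,\infty)$ for $\omega_1$-a.e.\ $x\in E_m$: one route is Preiss's rectifiability theorem applied to $\omega_1|_{E_m}$ combined with Besicovitch's density theorem; alternatively one can extract existence directly from the quantitative convergence in \Lemma{tanconnect} together with the $P_{\gamma,\omega_1}$-doubling hypothesis, which fixes a consistent normalization of the blow-ups at scales $r\le r_x$. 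The function $\theta$ is then Borel measurable on $E_m$.

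Fix $\delta>0$. Egorov's theorem applied to the pointwise convergence $\omega_1(B(z,t))/t^n\to\theta(z)$ along a countable sequence of scales $t_k\downarrow 0$ yields $F_\delta\subset E_m$ with $\omega_1(E_m\setminus F_\delta)<(\delta/3)\omega_1(E_m)$ on which $\theta$ is bounded (say $\theta\le D$) and the convergence is uniform: there exists $t_0=t_0(\delta)>0$ with
\[
\tfrac12\theta(z)\le \omega_1(B(z,t))/t^n\le 2\theta(z)\qquad\text{for all } z\in F_\delta,\ 0<t\le t_0.
\]
For $\omega_1$-a.e.\ $x\in F_\delta$ the point $x$ is simultaneously a Lebesgue density point of $F_\delta$ and a Lebesgue point of $\theta$ with respect to $\omega_1$. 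For such an $x$, choose $r_x\le t_0/2$ small enough that for all $r\le r_x$: (i) $\omega_1(F_\delta\cap B(x,r))\ge(1-\delta/3)\omega_1(B(x,r))$; (ii) $\omega_1(\{z\in B(x,r):\theta(z)>2\theta(x)\})\le(\delta/3)\omega_1(B(x,r))$, by Chebyshev applied to the Lebesgue-point property of $\theta$; and (iii) $\omega_1(B(x,r))/r^n\ge\theta(x)/2$, from the definition of $\theta(x)$.

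Set $G_m(x,r):=F_\delta\cap B(x,r)\cap\{z:\theta(z)\le 2\theta(x)\}$. Combining (i) and (ii) yields $\omega_1(B(x,r)\setminus G_m(x,r))\le\delta\,\omega_1(B(x,r))$. For $z\in G_m(x,r)$ and $0<t\le 2r\le t_0$, the uniform convergence on $F_\delta$ together with (iii) give
\[
\frac{\omega_1(B(z,t))}{t^n}\le 2\theta(z)\le 4\theta(x)\le 8\,\frac{\omega_1(B(x,r))}{r^n},
\]
which is the desired bound with a universal absolute constant (the $\delta$-dependence is absorbed into the choice of $r_x$, not the constant in $\lec$). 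The main obstacle is establishing the existence of $\theta$ in the first step: without some form of tangent-measure rigidity, the density limit can genuinely fail (e.g.\ for measures of the form $\log(1/|y-x|)\,d\cH^n|_V$), and it is precisely the combination of $\Tan(\omega_1,x)\subset\cF$ and the $P_{\gamma,\omega_1}$-doubling along $r\le r_x$ that makes $\theta(x)$ well-defined at $\omega_1$-a.e.\ $x\in E_m$.
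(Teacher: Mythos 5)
Your proposal has a genuine gap at the very step you flag as ``the main obstacle'': the existence of the pointwise density $\theta(x)=\lim_{t\to 0}\omega_1(B(x,t))/t^n\in(0,\infty)$ for $\omega_1$-a.e.\ $x\in E_m$ is not available at this stage of the argument, and none of the tools you cite deliver it. Knowing $\Tan(\omega_1,x)\subset\cF$ together with pointwise doubling only controls ratios at comparable scales: from $T_{x,r}[\omega_1]/\omega_1(B(x,r))\to c\,\cH^n|_V$ one gets $\omega_1(B(x,sr))/\omega_1(B(x,r))\to s^n$ for fixed $s$, but this is compatible with $\omega_1(B(x,r))/r^n$ oscillating, going to $0$, or going to $\infty$ (e.g.\ $f(r)=r^n\bigl(2+\sin\log\log(1/r)\bigr)$ has $f(sr)/f(r)\to s^n$ yet $f(r)/r^n$ does not converge). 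Preiss's rectifiability theorem takes existence and positivity of the density as a hypothesis, not a conclusion; Besicovitch's density theorem would require knowing $\omega_1\ll\cH^n$, which is essentially the theorem being proved. The paper's own later definitions make this explicit: $G_m^{pd}$ is defined with a $\limsup$, not a $\lim$, and $G_m^{zd}$ (zero density) is only shown to be $\omega_1$-null \emph{after} the Riesz-transform machinery of Lemma~\ref{lem:mainl3} and \cite{GT}, not before. So the density cannot be assumed in the proof of Lemma~\ref{lem:ver1}, which is upstream of all of that.

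The paper's route avoids densities entirely and instead uses the Alt--Caffarelli--Friedman monotonicity formula, which is the reason Section~3 exists. By Lemma~\ref{l:beurling}, Theorem~\ref{t:ACF}, and Lemma~\ref{l:otherside}, for $0<s<r$ with the doubling from Lemma~\ref{l:mainlem} in force one has
\[
\Theta_{\omega_1}(B(\xi,s))\,\Theta_{\omega_2}(B(\xi,s))\lesssim\gamma(\xi,2s)^{1/2}\leq\gamma(\xi,2r)^{1/2}\lesssim\Theta_{\omega_1}(B(\xi,Cr))\,\Theta_{\omega_2}(B(\xi,Cr)),
\]
so the \emph{product} of the two densities is controlled from above at all small scales by its value at scale $r$. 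One then decouples the product on a large subset of $B(x,r)$ where $h=d\omega_2/d\omega_1$ is comparable to $h(x)$ (using that $\omega_1$-a.e.\ point of $\Gamma$ is a Lebesgue point of $h$), and uses the $a$-$P_{\gamma,\omega_1}$-doubling of $B(x,r)$ to replace $\Theta_{\omega_1}(B(\xi,Cr))$ by $\Theta_{\omega_1}(B(x,r))$. This yields \eqref{e:mainl1} with a constant depending only on $n,a,\gamma$ and the decoupling bound, with no reference to pointwise densities. You should rework the argument along these lines; the Egorov/Lebesgue-point structure at the end of your proposal survives, but the object that needs to be controlled uniformly is the two-sided density \emph{product} via ACF, not a density limit that may not exist.
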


The proof is almost the same as the one of the analogous Lemma 6.2 from \cite{AMT}.
The only change  is that we cannot rely on
 Lemma 4.11 from \cite{AMT}, and instead we use the fact that,
  by Lemmas \ref{l:beurling}
and \ref{l:otherside}, given
$\xi\in\partial \Omega_1\cap\partial\Omega_2$ and $0<s<r$, with $r$ small enough, if $\omega_i(B(\xi,4r))\leq C\,\omega_i(B(\xi,\delta_0 r))$ for $i=1,2$ (which is guarantied by Lemma \ref{l:mainlem}),  then   we have
$$
\gamma(\xi,s)^{\frac{1}{2}}\leq \gamma(\xi,r)^{\frac{1}{2}}\lec \frac{\omega_1(B(\xi,8\delta_0^{-1}r))}{r^{n}}
\frac{\omega_2(B(\xi,8\delta_0^{-1}r))}{r^{n}}.$$

\vv

Given $m\geq1$ and $\delta>0$, we denote by $\wt E_{m,\delta}$ the subset of points $x\in E_m$ for which there exists $r_x>0$ as in Lemma \ref{lem:ver1}, so that $\omega_1\bigl(E_m\setminus \wt E_{m,\delta}\bigr)=0$. 

\vv

\begin{lemma}\label{lem:ver2}
Let $m\geq1$ and $\delta>0$. Let $x_0\in \wt E_{m,\delta}$ and $$0<r_0\leq \min(r_{x_0},1/m,c_1\dist(p_1,
\d\Omega_1)),$$ for some $c_1>0$ small enough (recall that $\omega_i$ is the harmonic measure for $\Omega_i$ with pole at $p_i$).
Suppose that the ball $B_{0}=B(x_{0},r_{0})$ is $a$-$P_{\gamma,\omega_1}$-doubling. Then for 
all $x \in G_m(x_{0},r_{0})$ it holds that
\begin{equation}\label{e:mainl2}
\RR_{*}(\chi_{2B_{0}}\omega_1)(x)\lesssim\Theta_{\omega_1}(B_{0}).
\end{equation}
\end{lemma}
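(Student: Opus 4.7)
The plan is to split by the truncation scale $t$: for $t\gtrsim r_{0}$ the bound is immediate from \eqref{e:mainl1} applied at $z=x$, since the kernel satisfies $|K(x-z)|\lesssim r_{0}^{-n}$ on the support of the relevant part of $\omega_{1}$ and $\omega_{1}(2B_{0})\lesssim r_{0}^{n}\Theta_{\omega_{1}}(B_{0})$. The main work is in the regime $0<t<r_{0}$, where I compare the truncated Riesz transform at the boundary point $x$ with an average over nearby interior points.

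By \eqref{e:ominondeg} applied to $\Omega_{1}$, the set $A_{t}:=B(x,c_{0}t)\cap\Omega_{1}$ has Lebesgue measure $\gtrsim t^{n+1}/m$ for a small fixed $c_{0}$, so I average
$$\RR_{t}(\chi_{2B_{0}}\omega_{1})(x)=\frac{1}{|A_{t}|}\int_{A_{t}}\RR_{t}(\chi_{2B_{0}}\omega_{1})(x)\,dy$$
and estimate its difference from $|A_{t}|^{-1}\int_{A_{t}}\RR(\chi_{2B_{0}}\omega_{1})(y)\,dy$. That difference breaks into a smoothness piece, controlled via $|K(x-z)-K(y-z)|\lesssim t/|x-z|^{n+1}$ for $|x-z|>t$ together with a dyadic decomposition based on \eqref{e:mainl1}, and a near-field truncation piece where $|x-z|\le t$, handled by Fubini applied to $\int_{A_{t}}|K(y-z)|\,dy\lesssim t$ and then \eqref{e:mainl1} once more. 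Both contributions are $\lesssim\Theta_{\omega_{1}}(B_{0})$.

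It therefore suffices to bound $|A_{t}|^{-1}\int_{A_{t}}\RR(\chi_{2B_{0}}\omega_{1})(y)\,dy$. Writing $\RR(\chi_{2B_{0}}\omega_{1})=\RR\omega_{1}-\RR(\chi_{(2B_{0})^{c}}\omega_{1})$, the tail is dominated on $B_{0}$ by $P_{\gamma,\omega_{1}}(2B_{0})\lesssim a\,\Theta_{\omega_{1}}(B_{0})$ using the $a$-$P_{\gamma,\omega_{1}}$-doubling of $B_{0}$ (here I use $\gamma<n$ so that the Riesz tail is controlled by $P_{\gamma,\omega_{1}}$). For $\RR\omega_{1}(y)$, since $y\in\Omega_{1}\setminus\{p_{1}\}$ lies off $\supp\omega_{1}$, the identity $u_{1}=\mathcal{E}(\cdot-p_{1})-\mathcal{E}\ast\omega_{1}$ (where $\mathcal{E}$ is the fundamental solution of $-\Delta$) yields
$$\RR\omega_{1}(y)=c_{n}\,\frac{y-p_{1}}{|y-p_{1}|^{n+1}}-c_{n}'\,\nabla u_{1}(y).$$
The pole term satisfies $|y-p_{1}|\gtrsim r_{0}$ by the choice of $r_{0}$ and $c_{0}$, and a Bourgain-type lower bound at scale $r_{0}$, using $|B_{0}\setminus\Omega_{1}|\gtrsim r_{0}^{n+1}/m$ (which follows from \eqref{e:ominondeg} applied to $\Omega_{2}\subset\Omega_{1}^{c}$), gives $\omega_{1}(B_{0})\gtrsim_{m}1$, whence $|y-p_{1}|^{-n}\lesssim_{m}\Theta_{\omega_{1}}(B_{0})$.

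The gradient term is handled by Cauchy--Schwarz together with Lemma~\ref{l:otherside} applied at scale $c_{0}t$:
$$\frac{1}{|A_{t}|}\int_{A_{t}}|\nabla u_{1}(y)|\,dy\lesssim_{m}\left(\frac{1}{(c_{0}t)^{n+1}}\int_{B(x,c_{0}t)\cap\Omega_{1}}|\nabla u_{1}|^{2}\right)^{1/2}\lesssim\frac{\omega_{1}(B(x,8\delta_{0}^{-1}c_{0}t))}{(c_{0}t)^{n}}\lesssim\Theta_{\omega_{1}}(B_{0}),$$
where the doubling hypothesis required by Lemma~\ref{l:otherside} is obtained by iterating \eqref{e:widub}, and the last inequality is \eqref{e:mainl1}. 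The main obstacle will be bookkeeping: verifying uniformly in $t\in(0,r_{0})$ that every auxiliary ball meeting these arguments satisfies the doubling and non-degeneracy hypotheses required by Lemmas~\ref{l:otherside}, \ref{l:dublemma}, and \ref{lembourgain}, and choosing $c_{0}$ and $c_{1}$ so that $p_{1}$ stays genuinely far from every point of $A_{t}$ for all relevant $t$, while absorbing all implicit constants depending on $m$, $a$, and $\gamma$ into the final estimate.
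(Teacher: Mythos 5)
Your overall architecture is genuinely different from the paper's: you average the Riesz transform over the interior set $A_{t}=B(x,c_{0}t)\cap\Omega_{1}$ and pass through the pointwise identity $\RR\omega_{1}(y)=K(y-p_{1})-c_n\nabla u_{1}(y)$ at interior points $y$, whereas the paper works at the boundary point $x$ itself, introduces the auxiliary function $v_{r}(z)=\EE(z-p_{1})-\int\EE(z-y)\vphi_{r}(x-y)\,d\omega_{1}(y)$ which is harmonic near $x$, and estimates $|\nabla v_{r}(x)|$ by the mean-value property and the smallness of the Green function $u_{1}$. Your treatment of the tail, the smoothness/near-field error terms, and the gradient term via Lemma~\ref{l:otherside} is plausible and roughly parallel in spirit. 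But there is a genuine gap in your estimate of the pole term.

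You claim that a Bourgain-type lower bound gives $\omega_{1}(B_{0})\gtrsim_{m}1$, and hence $|K(y-p_{1})|\lesssim|y-p_{1}|^{-n}\lesssim\Theta_{\omega_{1}}(B_{0})$. This is false: Lemma~\ref{lembourgain} bounds $\omega_{\Omega_{1}}^{z}(B_{0})$ from below only for poles $z$ \emph{inside} $\delta_{0}B_{0}$, not for the fixed far-away pole $p_{1}$, and for a fixed $p_{1}$ one has $\omega_{1}(B_{0})=\omega_{1}^{p_{1}}(B_{0})\to0$ as $r_{0}\to0$, so $\omega_{1}(B_{0})$ is not bounded below by a constant depending only on $m$. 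Quantitatively, $|K(y-p_{1})|\approx\dist(p_{1},\d\Omega_{1})^{-n}$ is a fixed number independent of $r_{0}$, while the $a$-$P_{\gamma,\omega_{1}}$-doubling of $B_{0}$ only yields something like $\Theta_{\omega_{1}}(B_{0})\gtrsim a^{-1}\bigl(r_{0}/\diam E\bigr)^{\gamma}(\diam E)^{-n}$, which tends to $0$ as $r_{0}\to0$; so the inequality $|K(y-p_{1})|\lesssim\Theta_{\omega_{1}}(B_{0})$ cannot hold uniformly. The difficulty is that $K(y-p_{1})$ and $\RR(\chi_{(2B_{0})^{c}}\omega_{1})(y)$ each blow up relative to $\Theta_{\omega_{1}}(B_{0})$, and it is only their \emph{combination} (equivalently, the gradient of the harmonic function $W(y)=\EE(y-p_{1})-\int\EE(y-z)\chi_{(2B_{0})^{c}}(z)\,d\omega_{1}(z)$, which is harmonic in $2B_{0}$) that is of the right size; splitting them as you do loses the cancellation. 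This is precisely why the paper keeps the pole and the far field together inside the harmonic function $v_{r}$: then the mean-value estimate for $\nabla v_{r}$ reduces everything to bounding $u_{1}$ and a localized potential, and the pole never appears in isolation. To repair your argument you would need to group $K(y-p_{1})$ with the far tail and invoke harmonicity of $W$ (or, equivalently, adopt the paper's $v_{r}$ device), rather than bound the pole term on its own.
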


In the proof of the analogous lemma in \cite{AMT} we used the fact that, for small radii,
the $\beta_\infty$ coefficients of the boundary for balls centered at $E$ are small $\omega_1$-a.e.\ in the case
that $\Omega_1$ and $\Omega_2$ satisfy the CDC. This is no longer true (as far as we know), and so the arguments below are somewhat different (in fact, they are inspired by the estimates
in the Key Lemma 4.3 from \cite{AHM3TV}).

\begin{proof}
To estimate  $|\RR_r(\chi_{2B_0}\omega_1)(x)|$ for $x \in G_m(x_{0},r_{0})$ we may assume that $r\leq r_0/4$ because 
 $|\RR_r(\chi_{2B_0}\omega_1)(x)|=0$ if $r\geq 4r_0$ and \rf{e:mainl2} is trivial
 in the case $r_0/4<r<4r_0$.

So we take  $x \in G_m(x_{0},r_{0})$ and $0<r\leq r_0/4$. 
Note that 
\begin{align*}
|\RR_r(\chi_{2B_0}\omega_1)(x)| &= |\RR_r \omega_1(x) - \RR_r(\chi_{(2B_0)^c}\omega_1)(x)|\\
& \leq |\RR_r \omega_1(x) - \RR_{r_0/4}\omega_1(x)| + 
|\RR_{r_0/4}\omega_1(x) - \RR_r(\chi_{(2B_0)^c}\omega_1)(x)|.
\end{align*}
It is immediate to check that the last term is bounded above by $C \Theta_{\omega_1}(2B_0)$,
and thus
\begin{equation}\label{eqas0}
|\RR_r(\chi_{2B_0}\omega_1)(x)| \leq |\RR_r \omega_1(x) - \RR_{r_0/4}\omega_1(x)| +
C \Theta_{\omega_1}(B_0).
\end{equation}

Let $\vphi:\R^{n+1}\to[0,1]$ be a radial $\CC^\infty$ function  which vanishes on $B(0,1)$ and equals $1$ on $\R^{n+1}\setminus B(0,2)$,
and for $\ve>0$ and $z\in \R^{n+1}$ denote
$\vphi_\ve(z) = \vphi\left(\frac{z}\ve\right) $ and $\psi_\ve = 1-\vphi_\ve$.
We set
$$\wt\RR_\ve\omega_1(z) =\int K(z-y)\,\vphi_\ve(z-y)\,d\omega_1(y),$$
where $K(\cdot)$ is the kernel of the $n$-dimensional Riesz transform. 
Note that
$$\bigl|\wt\RR_{r}\omega_1(x)- \RR_{r}\omega_1(x)\bigr|\lesssim \Theta_{\omega_1}(B(x,2r)).$$
Therefore, by \rf{eqas0} and \rf{e:mainl1},
\begin{equation}\label{eqas1}
|\RR_r(\chi_{2B_0}\omega_1)(x)| \leq |\wt\RR_r \omega_1(x) - \wt\RR_{r_0/4}\omega_1(x)| +
C \Theta_{\omega_1}(B_0).
\end{equation}

To estimate the first term on the right hand side of the inequality above, for a fixed $x \in G_m(x_{0},r_{0})$ and $z\in \R^{n+1}\setminus \bigl[\supp(\vphi_r(x-\cdot)\,\omega_1)\cup \{p_1\}\bigr]$, consider the function
$$v_r(z) = \EE(z-p_1) - \int \EE(z-y)\,\vphi_r(x-y)\,d\omega_1(y),$$
so that, by Remark 3.2 from \cite{AHM3TV},
\begin{equation}\label{eqfj33}
u_1(z) = G_{\Omega_1}(z,p_1) = v_r(z) - \int \EE(z-y)\,\psi_r(x-y)\,d\omega_1(y)\quad \mbox{ for $m$-a.e.   $z\in\R^{n+1}$.}
\end{equation}

Since the kernel of the Riesz transform is
\begin{equation}\label{eqker}
K(x) = c_n\,\nabla \EE(x),
\end{equation}
for a suitable absolute constant $c_n$, we have
$$\nabla v_r(z) = c_n\,K(z-p) - c_n\,\RR(\vphi_{ r}(\cdot-x)\,\omega_1)(z).$$

In the particular case $z=x$ we get
$$\nabla v_r(x) = c_n\,K(x-p) - c_n\,\wt\RR_r\omega_1(x).$$
Using this identity also for $r_0/4$ instead of $r$, we obtain
\begin{equation}\label{eqcv1}
|\wt\RR_r \omega_1(x) - \wt\RR_{r_0/4}\omega_1(x)| \approx |\nabla v_r(x) - \nabla v_{r_0/4}(x)|
\leq |\nabla v_r(x)| + |\nabla v_{r_0/4}(x)|
\end{equation}
Since $v_r$ is harmonic in $\R^{n+1}\setminus \bigl[\supp(\vphi_r(x-\cdot)\,\omega_1)\cup \{p_1\}\bigr]$ (and so in $B(x,r)$),  
we have
\begin{equation}\label{eqcv2}
|\nabla v_r(x)| \lesssim \frac1r\,\avint_{B(x,r)}|v_r(z)|\,dm(z).
\end{equation}
From the identity \rf{eqfj33} we deduce that
\begin{align}\label{eqcv3}
|\nabla v_r(x)| &\lesssim \frac1r\,\avint_{B(x,r)}u_1(z)\,dm(z) + 
\frac1r\,\avint_{B(x,r)}
\int \EE(z-y)\,\psi_r(x-y)\,d\omega_1(y)\,dm(z) \\
& =:I + II.\nonumber
\end{align}
To estimate the term $II$ we use Fubini and the fact that $\supp\vphi_r\subset B(x,2r)$:
\begin{align*}
II & \lesssim \frac1{r^{n+2}}\, \int_{y\in B(x,2r)}\int_{z\in B(x,r)} \frac 1{|z-y|^{n-1}} \,dm(z)\,d\omega_1(y) \lesssim \frac{\omega_1(B(x,2r))}{r^{n}}\lesssim \Theta_{\omega_1}(B_0).
\end{align*}

We intend to show now that $I\lesssim \Theta_{\omega_1}(B_0)$.
Clearly it is enough to show that
\begin{equation}\label{eqsuf1}
\frac1r\,| u_1(y)|\lesssim \Theta_{\omega_1}(B_0)\qquad\mbox{for all $y\in  B(x,r)\cap\Omega$.}
\end{equation}
To prove this, observe that by Lemma \ref{l:w>G} (with $B= B(x,r)$, $a=2\delta_0^{-1}$), for all $y\in B(x,r)\cap\Omega$,
we have
$$\omega_1(B(x,2\delta_0^{-1}r))\gtrsim \inf_{z\in B(x,2r)\cap \Omega} \omega^{z}(B(x,2\delta_0^{-1}r))\, r^{n-1}\,|u_1(y)|.$$
On the other hand, by Lemma \ref{lembourgain}, for any $z\in B(x,2r)\cap\Omega_1$,
$$\omega_1^{z}(B(x,2\delta_0^{-1}r))\gtrsim \frac{\HH^{n+1}(B(x,2r)\setminus \Omega_1)}{r^{n+1}}\gtrsim 1.$$
Therefore, $
\omega_1(B(x,2\delta_0^{-1}r))\gtrsim 
 r^{n-1}\,|u_1(y)|.
$
Then,
$$\frac1r\,| u_1(y)|\lesssim
\Theta_{\omega_1}(B(x,2\delta_0^{-1}r))\lesssim P_{\gamma,\omega_1}(B_0)\lesssim \Theta_{\omega_1}(B_0),$$ 
which proves \rf{eqsuf1}.

By the estimates obtained above for the terms $I$, $II$ for $r$ and $r_0/4$, we derive
$$|\nabla v_r(x)| + |\nabla v_{r_0/4}(x)| \lesssim \Theta_{\omega_1}(B_0).$$
Hence, by \rf{eqas1} and \rf{eqcv1}, we infer that
$$|\RR_{r}(\chi_{2B_0}\omega_1)(x)| \lesssim \Theta_{\omega_1}(B_0),$$
as wished.
\end{proof}
\vv

Let $m\geq1$, $\delta>0$, and $x_0\in \wt E_{m,\delta}$, and denote
$$G_m^{zd}(x_0,r_0) = \{x\in G_m(x_0,r_0): \lim_{r\to0}\Theta_{\omega_1}(B(x,r))=0\},$$
and
$$G_m^{pd}(x_0,r_0) = \{x\in G_m(x_0,r_0): \limsup_{r\to0}\Theta_{\omega_1}(B(x,r))>0\}.$$
The notation ``$zd$'' stands for ``zero density'', and ``$pd$'' stands for ``positive density''.

\vv

The proof of the next lemma is the same as the one of the analogous lemma in \cite{AMT}. This is an easy consequence of the
main result from \cite{AHM3TV}, which in turn relies on \cite{NToV} and \cite{NToV-pubmat}.

\begin{lemma}\label{lem:rectifac}
Let $m\geq1$ and $\delta>0$. Let $x_0\in \wt E_{m,\delta}$ and $$0<r_0\leq \min(r_{x_0},1/m,c_1\dist(p_1,
\d\Omega_1)),$$ for some $c_1>0$ small enough.
Suppose that the ball $B_{0}=B(x_{0},r_{0})$ is $a$-$P_{\gamma,\omega_1}$-doubling. Then there is 
an $n$-rectifiable
set $F(x_0,r_0)\subset G_m^{pd}(x_0,r_0)$ such that 
\[\omega_1(G_m^{pd}(x_0,r_0)\setminus F(x_0,r_0))=0\] 
and so that
$\omega_1|_{F(x_0,r_0)}$ and $\HH^n|_{F(x_0,r_0)}$ are mutually absolutely continuous.
\end{lemma}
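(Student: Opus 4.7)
The plan is to derive the conclusion by assembling two ingredients already in hand — the uniform upper density bound of Lemma~\ref{lem:ver1} and the pointwise truncated Riesz transform bound of Lemma~\ref{lem:ver2} — and then feeding them into the main rectifiability theorem of \cite{AHM3TV} (which in turn packages the solution of the David--Semmes problem in codimension one from \cite{NToV}, \cite{NToV-pubmat}, \cite{ENV}) as a black box. This is precisely the structure of the analogous lemma in \cite{AMT}; the CDC-dependent inputs there have been replaced in the present paper by the Green-function/ACF-based Lemma~\ref{lem:ver2}, so no new geometric argument remains to be carried out.

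More concretely, I would introduce the finite Radon measure $\nu := \chi_{2B_0}\omega_1$ and verify the hypotheses of the \cite{AHM3TV} criterion on the Borel set $A := G_m^{pd}(x_0,r_0)$. Lemma~\ref{lem:ver1} gives, for every $x \in G_m(x_0,r_0)$, the uniform bound $\Theta_{\omega_1}(B(x,t)) \lesssim \Theta_{\omega_1}(B_0)$ for all $0<t\leq 2r_0$, so in particular $\Theta^{n,*}(\omega_1,x) < \infty$ everywhere on $A$; by the very definition of $G_m^{pd}(x_0,r_0)$, the upper density is also strictly positive there. Lemma~\ref{lem:ver2} simultaneously yields $\RR_*\nu(x) \lesssim \Theta_{\omega_1}(B_0)$ for every $x\in G_m(x_0,r_0)$, hence in particular $\RR_*\nu$ is pointwise finite on $A$.

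With these hypotheses verified, the main theorem of \cite{AHM3TV} asserts that there is an $n$-rectifiable Borel set $F(x_0,r_0)\subset A$ with $\omega_1(A\setminus F(x_0,r_0))=0$ and $\omega_1|_{F(x_0,r_0)} \ll \HH^n|_{F(x_0,r_0)}$. For the reverse absolute continuity $\HH^n|_{F(x_0,r_0)} \ll \omega_1|_{F(x_0,r_0)}$, I would use that on an $n$-rectifiable set the Radon--Nikodym density $d\omega_1/d\HH^n$ agrees $\HH^n$-a.e.\ with the $n$-density of $\omega_1$, and this density is strictly positive $\omega_1$-a.e.\ on $G_m^{pd}(x_0,r_0)$ by construction; removing from $F(x_0,r_0)$ the $\HH^n$-null set where this density vanishes does not alter its $\omega_1$-measure and yields the two-sided mutual absolute continuity.

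The one genuinely hard step is the application of \cite{AHM3TV}, which encapsulates a substantial body of work. The rest is measure-theoretic bookkeeping identical to the corresponding argument in \cite{AMT}, so no new obstacle arises here beyond the preparatory work in Sections~3--5 of the present paper.
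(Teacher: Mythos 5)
Your proposal is correct and takes essentially the same route as the paper, which simply defers to the analogous lemma in \cite{AMT} and the main rectifiability criterion of \cite{AHM3TV}: verify the upper-density bound from Lemma~\ref{lem:ver1}, the $\RR_*$ bound from Lemma~\ref{lem:ver2}, the positivity of upper density built into the definition of $G_m^{pd}(x_0,r_0)$, and invoke \cite{AHM3TV} as a black box. One small imprecision in your last step: the set where the Radon--Nikodym density $g=d\omega_1|_{F}/d\cH^n|_{F}$ vanishes is automatically $\omega_1$-null but need not be $\cH^n$-null; fortunately only the $\omega_1$-nullity is needed (so that removing it preserves $\omega_1(G_m^{pd}\setminus F)=0$), and the appeal to the identification of $g$ with the $n$-density of $\omega_1$ is neither quite correct nor actually used, since discarding $\{g=0\}$ directly yields $\cH^n|_F\ll\omega_1|_F$ on the remainder.
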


\vv

\begin{lemma}\label{lem:mainl3}
Let $m\geq1$ and $\delta>0$. Let $x_0\in \wt E_{m,\delta}$ and $$0<r_0\leq \min(r_{x_0},1/m,c_1\dist(p_1,
\d\Omega_1)),$$ for some $c_1>0$ small enough.
Suppose that the ball $B_{0}=B(x_{0},r_{0})$ is $a$-$P_{\gamma,\omega_1}$-doubling, then 
\begin{multline}\label{e:mainl3}
\int_{G_m^{zd}(x_0,r_0)}|\RR\omega_1(x)-m_{\omega_1,G_m^{zd}(x_0,r_0)}(\RR\omega_1)|^2\,d\omega_1(x)\\
\lesssim \left(\frac{r_0}{|x_0 - p_1|}\right)^{2-2\gamma}\,  \Theta_{\omega_1}(B_0)^2\, \omega_1(B_0).
\end{multline}
\end{lemma}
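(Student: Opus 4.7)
The plan is to follow the template of the analogous lemma (Lemma 6.5) in \cite{AMT}, adapted to the absence of a CDC assumption. The key starting point, obtained inside the proof of Lemma~\ref{lem:ver2} above, is the identity
\[
\wt\RR_r\omega_1(x) = K(x-p_1) - c_n^{-1}\nabla v_r^{(x)}(x), \qquad x\in G_m^{zd}(x_0,r_0),\ 0<r<r_0/4,
\]
where $v_r^{(x)}(z)=\EE(z-p_1)-\int \EE(z-y)\,\vphi_r(x-y)\,d\omega_1(y)$. For $x\in G_m^{zd}(x_0,r_0)$ the zero-density condition $\lim_{r\to 0}\Theta_{\omega_1}(B(x,r))=0$ allows us to pass to the limit $r\to 0$ and obtain, writing $\Phi(x):=-c_n^{-1}\lim_{r\to 0}\nabla v_r^{(x)}(x)$,
\[
\RR\omega_1(x)= K(x-p_1)+\Phi(x) \quad\text{on } G_m^{zd}(x_0,r_0).
\]
Using the trivial bound $\int_G |f-m_G(f)|^2\,d\omega_1\leq \int_G |f-c|^2\,d\omega_1$ for any constant $c$, it suffices to control the $L^2(\omega_1)$-oscillation of $K(\cdot-p_1)$ and that of $\Phi$ over $G_m^{zd}(x_0,r_0)$.

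The first quantity is controlled by smoothness of $K$ away from the pole. Since $r_0\leq c_1\dist(p_1,\d\Omega_1)\leq c_1|x_0-p_1|$ with $c_1$ small, $|K(x-p_1)-K(x_0-p_1)|\lesssim r_0/|x_0-p_1|^{n+1}$ for every $x\in B_0$, and hence
\[
\int_{G_m^{zd}(x_0,r_0)}|K(\cdot-p_1)-K(x_0-p_1)|^2\,d\omega_1 \lesssim \frac{r_0^2}{|x_0-p_1|^{2n+2}}\,\omega_1(B_0).
\]
To convert this into the required form, the crucial input is the lower bound
\[
\omega_1(B_0)\gtrsim \left(\frac{r_0}{|x_0-p_1|}\right)^{n+\gamma},
\]
which I would derive by iterating the $a$-$P_{\gamma,\omega_1}$-doubling hypothesis (which yields $\Theta_{\omega_1}(2^j B_0)\lesssim 2^{j\gamma}\Theta_{\omega_1}(B_0)$ for every $j\ge 0$) up to the scale $2^j r_0\sim |x_0-p_1|\sim r(\wt B)$, and then invoking \eqref{eqbour10}, i.e.\ $\omega_1(2\delta^{-1}\wt B)\approx 1$. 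Squaring this and rearranging produces exactly the factor $(r_0/|x_0-p_1|)^{2-2\gamma}\Theta_{\omega_1}(B_0)^2$ needed to upgrade the smooth-kernel bound to the form claimed by the lemma.

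The main obstacle is controlling the oscillation of $\Phi$ over $G_m^{zd}(x_0,r_0)$ by the same quantity. The pointwise bound $|\nabla v_r^{(x)}(x)|\lesssim \Theta_{\omega_1}(B_0)$ extracted from the proof of Lemma~\ref{lem:ver2} only yields the trivial estimate $\int |\Phi-m_G(\Phi)|^2 d\omega_1\lesssim \Theta_{\omega_1}(B_0)^2\omega_1(B_0)$, which is too large by precisely the factor $(r_0/|x_0-p_1|)^{2-2\gamma}$. To gain this smallness I would exploit the zero-density condition more carefully: on $G_m^{zd}(x_0,r_0)$ the $x$-dependent cutoff $\vphi_r(x-\cdot)$ removes only a vanishing proportion of $\omega_1$-mass as $r\to 0$, so $v_r^{(x)}$ converges, in a localized $W^{1,2}$ sense, to the Green function $u_1$ independently of the base point $x$. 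This should allow one to identify $\Phi$, at $\omega_1$-a.e.\ point of $G_m^{zd}(x_0,r_0)$, with a boundary trace of $-c_n^{-1}\nabla u_1$, which by integration by parts against the pole $p_1$ and the smoothness of $K(\cdot-p_1)$ away from $p_1$ inherits an oscillation bound of the same order as the smooth kernel part treated above, and is then absorbed into the right-hand side via the $P_\gamma$-doubling lower bound on $\omega_1(B_0)$. Making this limiting identification rigorous, without the uniform boundary non-degeneracy provided by the CDC in \cite{AMT}, is the most delicate step.
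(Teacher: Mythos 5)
Your treatment of the smooth kernel term $K(\cdot-p_1)$ is sound: the lower bound $\omega_1(B_0)\gtrsim (r_0/|x_0-p_1|)^{n+\gamma}$ does follow by iterating the $a$-$P_{\gamma,\omega_1}$-doubling property out to scale $|x_0-p_1|$ and invoking \eqref{eqbour10}, and this converts the Lipschitz estimate on $K$ into exactly the claimed form; this is essentially the same calculation the paper performs directly with $P_{\gamma,\omega_1}(B_0)$.

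The genuine gap is in the treatment of $\Phi$. You only record the coarse bound $|\nabla v_r^{(x)}(x)|\lesssim\Theta_{\omega_1}(B_0)$ and then try to compensate with a speculative argument about $v_r^{(x)}\to u_1$ in $W^{1,2}$ and boundary traces of $\nabla u_1$, which you yourself flag as unfinished. The paper's route is much shorter: $\Phi\equiv 0$ $\omega_1$-a.e.\ on $G_m^{zd}(x_0,r_0)$, so that $\RR\omega_1(x)=K(x-p_1)$ exactly there, and the whole oscillation comes from the smooth kernel. The point you are missing is that the estimates inside the proof of Lemma~\ref{lem:ver2} are scale-local: rerunning the computation of $I$ and $II$ at scale $r$ (which is legitimate since \eqref{e:ominondeg} gives $\cH^{n+1}(B(x,r)\setminus\Omega_1)\gtrsim r^{n+1}/m$ at every small scale, so Bourgain's estimate and Lemma~\ref{l:w>G} still apply) actually yields
\[
|\nabla v_r^{(x)}(x)|\;\lesssim\;\Theta_{\omega_1}\bigl(B(x,2\delta_0^{-1}r)\bigr),
\]
not merely $\lesssim\Theta_{\omega_1}(B_0)$. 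Choosing a sequence $r_j\to0$ of radii for which $B(x,r_j)$ is $a$-$P_{\gamma,\omega_1}$-doubling (Lemma~\ref{l:adoubling}) one has $\Theta_{\omega_1}(B(x,2\delta_0^{-1}r_j))\lesssim\Theta_{\omega_1}(B(x,r_j))$, and the defining property of $G_m^{zd}(x_0,r_0)$ forces $\Theta_{\omega_1}(B(x,r_j))\to 0$. Hence $\nabla v_{r_j}^{(x)}(x)\to0$, i.e.\ $\Phi(x)=0$, and your ``main obstacle'' evaporates. Without this observation, the proposal does not close; with it, the rest of your argument goes through.
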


\begin{proof}
We claim that for $\omega_1$-a.e.\, $x\in G_m^{zd}(x_0,r_0)$, 
\begin{equation}
\RR\omega_1(x) = K(x-p_1).
\label{e:r=k2}
\end{equation}
Indeed, consider a sequence $r_{j}\rightarrow 0$ so that $B_{j}=B(x,r_{j})$ is $a$-$P_{\omega_1}$-doubling for every $j$. Then if $\wt\RR_{r}$ is as in the proof of Lemma \ref{l:adoubling}, we have that 
%\xavi{I changed the notation $u_{r_j}$ by $v_{r_j}$, to avoid confusions with the Green functions.}
$\wt\RR_{r_j} \omega_1(x) - K(x - p_1) = c_n \nabla v_{r_j}(x)$. From the exact same estimates we can prove that 
$$|\nabla v_{r_j}(x)| \lesssim \frac{\omega_1(B(x,2 \delta_0 ^{-1} r_j))}{r_j^n} \lesssim \frac{\omega_1(B(x, r_j))}{r_j^n},$$
 where in the last inequality we used 
% \eqref{e:widub}%
%\xavi{I said this is because the ball is P-doubling instead of (5.1).}
that $B_{j}=B(x,r_{j})$ is $a$-$P_{\omega_1}$-doubling. Therefore, taking $j \to \infty$ and since $\lim_{r\rightarrow 0}\Theta_{\omega_1}(B(x,r))=0$, we infer that $\RR\omega_1(x) =K(x-p_1)$.
 Since the distance between the center $\wt z$ of $\wt B$ (this is the ball introduced at the beginning of this section) and $x_0$
is at most $\diam(E)$ and $|\wt z-p_1|\geq 2\diam(E)$, it follows easily that $\delta_0^{-1}\wt B\subset B(x_0, 10\delta_0^{-1}|x_0 - p_1|)$, and then from \rf{eqbour10} we infer that
$$\omega_1(B(x_0, 10\delta_0^{-1}|x_0 - p_1|))\approx1.
$$
Then we obtain 
 \begin{align*}
|\RR\omega_1(x) &- m_{\omega_1,G_m^{zd}(x_0,r_0)}(\RR\omega_1)| \leq \!\!\sup_{y\in G_m^{zd}(x_0,r_0)} \!|K(x-p_1) - K(y-p_1)|\\
&\lesssim \frac{r_0}{|x_0-p_1|^{n+1}} \lesssim \frac{r_0}{|x_0-p_1|} \frac{\omega_1(B(x_0, 10\delta_0^{-1}|x_0 - p_1|))}{|x_0 - p_1|^{n}}  \\
& =\Theta_{\omega_1}(B(x_0,10\delta_0^{-1}|x_0-p_1|))\, \left(\frac{r_0}{|x_0-p_1|}\right)^\gamma \left(\frac{r_0}{|x_0-p_1|}\right)^{1-\gamma}\\
& \lesssim P_{\gamma,\omega_1}(B_0)\left(\frac{r_0}{|x_0-p_1|}\right)^{1-\gamma}\\
& \lesssim \Theta_{\omega_1}(B_0)\left(\frac{r_0}{|x_0-p_1|}\right)^{1-\gamma}.
\end{align*}
where in the last inequality we used the fact that $B_0$ is $a$-$P_{\gamma,\omega_1}$-doubling. The conclusion of the lemma readily follows.
 \end{proof}
\vv

To finish the proof of Theorem \ref{t:main} we proceed as in \cite{AMT}:  by combining the preceding lemma with the main result
from \cite{GT}, it follows easily that $\omega_1(G_m^{zd}(x_0,r_0))=0$. From this fact and Lemma \ref{lem:rectifac}, one
deduces Theorem \ref{t:main}.
The precise arguments are the same as the ones in the end of Section 6 of
\cite{AMT}.

\vvv
We have now proven Theorem \ref{t:main} for regular domains. We now apply \Lemma{weiner} to obtain the general case. This finishes the proof of Theorem \ref{t:main}.


\begin{thebibliography}{MMMTV2}

\bibitem[AAM]{AAM16}
M. Akman, J. Azzam, and M. Mourgoglou,  \emph{Absolute continuity of harmonic measure
  for domains with lower regular boundaries}. \emph{arXiv preprint
  arXiv:1605.07291}  (2016).

\bibitem[ACF]{ACF}
H.W. Alt, L.A. Caffarelli, and A.~Friedman, \emph{Variational problems with two
  phases and their free boundaries}. Trans. Amer. Math. Soc. \textbf{282}
  (1984), no.~2, 431--461. \MR{732100 (85h:49014)}
  
\bibitem[AG]{AG}
 D.H. Armitage and S.J. Gardiner, \emph{Classical potential theory}, Springer
  Monographs in Mathematics, Springer-Verlag London, Ltd., London, 2001.

\bibitem[AHM$^3$TV]{AHM3TV}
J. Azzam, S. Hofmann, J.M. Martell, S. Mayboroda, M. Mourgoglou, X. Tolsa, and A. Volberg. {\em Rectifiability of harmonic measure.}  Geom. Funct. Anal. (GAFA), 26(3) (2016), 703-728. 
%  ArXiv:1509.06294. 

\bibitem[AMT]{AMT} J. Azzam, M. Mourgoglou and X. Tolsa. {\em Mutual absolute continuity of interior and exterior harmonic measure implies
rectifiability.} Preprint (2015). To appear in Comm. Pure Appl. Math.

\bibitem[Bi1]{Bishop-questions}
C.~J. Bishop, \emph{Some questions concerning harmonic measure}, Partial
  differential equations with minimal smoothness and applications ({C}hicago,
  {IL}, 1990), IMA Vol. Math. Appl., vol.~42, Springer, New York, 1992,
  pp.~89--97. \MR{1155854 (93f:30023)}

\bibitem[Bi2]{Bishop-arkiv} C.~J. Bishop, \emph{A characterization of Poissonian domains}.
Ark. Mat. \textbf{29(1)} (2006), 1--24.

\bibitem[CS]{CS} L. Caffarelli and S. Salsa. \emph{A geometric approach to free boundary problems.} Graduate
Texts in Math. 64. Amr. Math. Soc. (2005).

\bibitem[ENV]{ENV} V. Eiderman, F. Nazarov and A. Volberg. {\em 
The $s$-Riesz transform of an $s$-dimensional measure in $\R^2$ is unbounded for $1<s<2$.}
J. Anal. Math. (2014) 122: 1. 

\bibitem[GT]{GT}
D. Girela-Sarri\'on and X. Tolsa, \emph{Quantitative estimates for the {R}is transform and rectifiability for general Radon measures.} Preprint arXiv:1601.08079 (2015).

\bibitem[HKM]{HKM}
J.~Heinonen, T.~Kilpel{\"a}inen, and O.~Martio, {\em Nonlinear potential theory of degenerate elliptic equations}. Dover Publications, Inc., Mineola, NY, 2006. Unabridged republication of the 1993 original.

\bibitem[H]{H}
L.L. Helms, \emph{Potential theory}, Universitext, Springer, London, 2014, 2nd
  ed.

\bibitem[HMMTV]{HMMTV} S. Hofmann, J.M. Martell, S. Mayboroda, X. Tolsa, and A. Volberg. {\em Absolute continuity between the surface measure and harmonic measure implies rectifiability.}  Preprint  arXiv:1507.04409 (2015). 

\bibitem[KPT]{KPT}
C.E. Kenig, D.~Preiss, and T.~Toro, \emph{Boundary structure and size in terms
  of interior and exterior harmonic measures in higher dimensions}, J. Amer.
  Math. Soc. \textbf{22} (2009), no.~3, 771--796. \MR{2505300 (2010h:28005)}
  
\bibitem[Ma]{Mattila} P. Mattila. {\em Geometry of sets and measures in
Euclidean spaces,} Cambridge Stud. Adv. Math. 44, Cambridge Univ.
Press, Cambridge, 1995.


\bibitem[NTV1]{NToV} F. Nazarov, X. Tolsa and A. Volberg, \emph{On the uniform
  rectifiability of {AD}-regular measures with bounded {R}iesz transform
  operator: the case of codimension 1}, Acta Math. \textbf{213} (2014), no.~2,
  237--321. \MR{3286036}
  
\bibitem[NTV2]{NToV-pubmat} F. Nazarov, X. Tolsa and A. Volberg. {\em The Riesz transform Lipschitz harmonic functions.}
Publ. Mat. 58 (2014), 517--532.

\bibitem[Pr]{Preiss} D.\ Preiss, {\em Geometry of measures in $\R^n$: distribution,
rectifiability, and densities,} Ann.\ of Math. {125} (1987),
537--643.

\bibitem[Ts]{Tsirelson} B. Tsirelson. {\em Triple points: from non-Brownian
filtrations to harmonic measures.} Geom. Funct. Anal. (GAFA), Vol. 7 (1997) 1096-1142.

\bibitem[TV]{TV}  X. Tolsa and A. Volberg. {\em On Tsirelson's theorem about triple points for harmonic measure.} Preprint  arXiv:1608.04022 (2016).
\end{thebibliography}
\end{document}